\renewcommand{\iff}{if and only if }
\newcommand{\st}{such that }
\newcommand{\modR}{\hbox{\rm mod-}R}
\newcommand{\ModR}{\hbox{\rm Mod-}R}
\newcommand{\stModR}{\hbox{\rm \underline{Mod}-}R}
\newcommand{\Add}{\mathrm{Add}}
\newcommand{\Prod}{\mathrm{Prod}}
\newcommand{\Pot}{\mathfrak{P}}
\newcommand{\Plambda}{\mathfrak P^{<\lambda}}
\newcommand{\Ab}{\mathrm{Ab}}
\newcommand{\T}{\mathcal{T}}
\newcommand{\Tcomp}{{\mathcal{T}_0}}
\newcommand{\Z}{\mathbb{Z}}
\newcommand{\Q}{\mathbb{Q}}
\DeclareMathOperator{\Hom}{Hom}
\DeclareMathOperator{\Ext}{Ext}
\DeclareMathOperator{\Ker}{Ker}
\DeclareMathOperator{\Img}{Im}
\DeclareMathOperator{\Coker}{Coker}
\DeclareMathOperator{\cf}{cf}
\DeclareMathOperator{\rk}{rk}
\DeclareMathOperator{\hocolim}{\underrightarrow{\textrm{hocolim}}}
\theoremstyle{plain}
\newtheorem{thm}{Theorem}[section]
\newtheorem{prop}[thm]{Proposition}
\newtheorem{lem}[thm]{Lemma}
\newtheorem{cor}[thm]{Corollary}
\theoremstyle{definition}
\newtheorem{defn}[thm]{Definition}
\theoremstyle{remark}
\newtheorem*{rem}{Remark}
\begin{document}
\title{The countable Telescope Conjecture for module categories}

\author{\textsc{Jan \v Saroch}}
\address{Charles University, Faculty of Mathematics and Physics, Department of Algebra \\ 
Sokolovsk\'{a} 83, 186 75 Prague 8, Czech Republic}
\email{saroch@karlin.mff.cuni.cz}

\author{\textsc{Jan~\v S\v tov\'\i\v cek}}
\address{Institutt for matematiske fag, Norges teknisk-naturvitenskapelige universitet \\ 
N-7491 Trondheim, Norway}
\email{stovicek@math.ntnu.no}

\keywords{cotorsion pair, deconstruction, coherent functor, triangulated category, Telescope conjecture}

\thanks{Both authors supported by the grants GAUK 301-10/252216, GA\v CR 201/05/H005, and the research project MSM 0021620839.}
\thanks{Second author supported also by the Research Council of Norway through Storforsk-project Homological and geometric methods in algebra.}

\subjclass[2000]{16E30, 18E30 (primary), 03C60, 16D90, 18G25, 20K40 (secondary)}
\date{\today}

\begin{abstract} By the Telescope Conjecture for Module Categories, we mean the following claim: ``Let $R$ be any ring and $(\mathcal A, \mathcal B)$ be a hereditary cotorsion pair in $\ModR$ with $\mathcal A$ and $\mathcal B$ closed under direct limits. Then $(\mathcal A, \mathcal B)$ is of finite type.''

We prove a modification of this conjecture with the word `finite' replaced by `countable'. We show that a hereditary cotorsion pair $(\mathcal A,\mathcal B)$ of modules over an arbitrary ring $R$ is generated by a set of strongly countably presented modules provided that $\mathcal B$ is closed under unions of well-ordered chains. We also characterize the modules in $\mathcal B$ and the countably presented modules in $\mathcal A$ in terms of morphisms between finitely presented modules, and show that $(\mathcal A, \mathcal B)$ is cogenerated by a single pure-injective module provided that $\mathcal A$ is closed under direct limits. Then we move our attention to strong analogies between cotorsion pairs in module categories and localizing pairs in compactly generated triangulated categories.
\end{abstract}

\maketitle
\vspace{4ex}


Motivated by the paper \cite{KS} of Krause and Solberg, the first author with Lidia Angeleri H\" ugel and Jan Trlifaj started in \cite{AST} an investigation of the \emph{Telescope Conjecture for Module Categories} (TCMC) stated as follows (see Section~\ref{sec:prelim} for unexplained terminology):

\medskip

{\bf Telescope Conjecture for Module Categories.} Let $R$ be a ring and $(\mathcal A, \mathcal B)$ be a hereditary cotorsion pair in $\ModR$ with $\mathcal A$ and $\mathcal B$ closed under direct limits. Then $\mathcal A = \varinjlim (\mathcal A\cap\modR)$.

\medskip

The term `Telescope Conjecture' is used here because the particular case of TCMC when $R$ is a self-injective artin algebra and $(\mathcal A, \mathcal B)$ is a projective cotorsion pair was shown in \cite{KS} to be equivalent to the following telescope conjecture for compactly generated triangulated categories (in this case---for the stable module category over $R$) which originates in works of Bousfield~\cite{Bo} and Ravenel~\cite{Ra} and has been extensively studied by Krause in~\cite{K,K2}:

\medskip

{\bf Telescope Conjecture for Triangulated Categories.} Every smashing localizing subcategory of a compactly generated triangulated category is generated by compact objects.

\medskip

Under some restrictions on homological dimensions of modules in the cotorsion pair $(\mathcal A, \mathcal B)$, TCMC is known to hold. The first author and co-authors showed in~\cite{AST} that the conclusion of TCMC amounts to saying that the given cotorsion pair is of finite type. If all modules in $\mathcal A$ have finite projective dimension, then the cotorsion pair is tilting~\cite{ST2}, hence of finite type~\cite{BS}. If $R$ is a right noetherian ring and $\mathcal B$ consists of modules of finite injective dimension, then $(\mathcal A, \mathcal B)$ is of finite type, too~\cite{AST}. Therefore, TCMC holds true for example for any cotorsion pair over a ring with finite global dimension. Unfortunately, the interesting connection with triangulated categories introduced in~\cite{KS} works for self-injective artin algebras, where the only cotorsion pairs satisfying the former conditions are the trivial ones.

\smallskip

The aim of this paper is twofold. First, we prove the Countable Telescope Conjecture in Theorem~\ref{thm:main}: any cotorsion pair satisfying the hypotheses of TCMC is of countable type---that is, the class $\mathcal B$ is the $\Ext^1$-orthogonal class to the class of all (strongly) countably presented modules from $\mathcal A$. This is a weaker version of TCMC. We will also show that this result easily implies a more direct argument for a large part of the proof that all tilting classes are of finite type ~\cite{BET,BH,ST2,BS}.

The second goal is to systematically analyze analogies between approximation theory for cotorsion pairs and results about localizations in compactly generated triangulated categories. Considerable efforts have been made on both sides. Cotorsion pairs were introduced by Salce in~\cite{Sa} where he noticed a homological connection between special preenvelopes and precovers---or left and right approximation in the terminology of~\cite{ARS}. In~\cite{ET}, Eklof and Trlifaj proved that any cotorsion pair generated by a set of modules provides for these approximations. This turns out to be quite a usual case and the related theory with many applications is explained in the recently issued monograph~\cite{GT}. Localizations of triangulated categories have, on the other hand, motivation in algebraic topology. The telescope conjecture above was introduced by Bousfield~\cite[3.4]{Bo} and Ravenel~\cite[1.33]{Ra}. Compactly generated triangulated categories and their localizations were studied by Neeman~\cite{N1,N2} and Krause~\cite{K,K2}. Even though the telescope conjecture is known to be false for general triangulated categories~\cite{Ke2}, it is still open for the important and topologically motivated stable homotopy category as well as for stable module categories over self-injective artin algebras.

Although it should not be completely unexpected that there are some analogies between the two settings, as the derived unbounded category is triangulated compactly generated and provides a suitable language for homological algebra, the extent to which the analogies work is rather surprising. Roughly speaking, it is sufficient to replace an $\Ext^1$-group in a module category by a $\Hom$-group in a triangulated category, and we obtain a valid result. However, there are also substantial differences here---for instance special precovers and preenvelopes provided by cotorsion pairs are, unlike adjoint functors coming from localizations, not functorial. 

In Section~\ref{sec:def}, we prove in Theorem~\ref{thm:char_B} that if $(\mathcal A,\mathcal B)$ is a cotorsion pair meeting the assumptions of TCMC, then $\mathcal B$ is defined by finite data in the sense that it is the $\Ext^1$-orthogonal class to a certain ideal of maps between finitely presented modules. Moreover, we characterize the countably generated modules in $\mathcal A$ as direct limits of systems of maps from this ideal (Theorem~\ref{thm:char_A}). In Section~\ref{sec:dirlim}, we prove in Theorem~\ref{thm:single_tcmc} that $\mathcal A = \Ker\Ext^1(-,E)$ for a single pure-injective module $E$.

Finally, in Section~\ref{sec:triang}, we give the triangulated category analogues of all of the main results for module categories. Some of them come from our analysis, while the others were originally proved by Krause in~\cite{K} and served as a source of inspiration for this paper.


\subsection*{Acknowledgements}

The authors would like to thank Jan Trlifaj for reading parts of this text and giving several valuable comments, and also to \O{}yvind Solberg for stimulating discussions and helpful suggestions.


\section{Preliminaries}
\label{sec:prelim}

Throughout this paper, $R$ will always stand for an associative ring with unit, and all modules will be (unital) right $R$-modules. We call a module \emph{strongly countably presented} if it has a projective resolution consisting of countably generated projective modules. \emph{Strongly finitely presented} modules are defined in the same manner with the word `countably' replaced by `finitely'. We denote the class of all modules by $\ModR$ and the class of all strongly finitely presented modules by $\modR$.

We note that the notation $\modR$ is often used in the literature for the class of \emph{finitely presented modules}; that is, the modules $M$ possessing a presentation $P_1 \to P_0 \to M \to 0$ where $P_0$ and $P_1$ are finitely generated and projective. We have digressed a little from this de-facto standard for the sake of keeping our notation simple, and we believe that this should not cause much confusion. We remind that if $R$ is a right coherent ring, then the class of strongly finitely presented modules coincides with the class of finitely presented ones. Moreover, one typically restricts oneself to coherent rings in various applications.


\subsection{Continuous directed sets and associated filters}

Let $(I, \leq)$ be a partially ordered set and $\lambda$ be an infinite regular cardinal. We say that $I$ is \emph{$\lambda$-complete} if every well-ordered ascending chain $(i_\alpha\mid\alpha<\tau)$ of elements from $I$ of length $<\lambda$ has a supremum in $I$. If this is the case, we call a subset $J \subseteq I$ \emph{$\lambda$-closed} if, whenever such a chain is contained in $J$, its supremum is in $J$ as well.
For instance for any set $X$, the power set $\Pot(X)$ ordered by inclusion is $\lambda$-complete and the set $\Plambda(X)$ of all subsets of $X$ of cardinality $<\lambda$ is $\lambda$-closed in $\Pot(X)$.

Recall that a subset $J \subseteq I$ is called $\emph{cofinal}$ if for every $i \in I$ there is $j \in J$ \st $i\leq j$. Note that if $I$ is a totally ordered set, then the cofinal subsets of $I$ are precisely the unbounded ones.

>From now on, we assume that $(I, \leq)$ is a directed set. If $(M_i, f_{ji}: M_i\to M_j\mid i,j\in I\;\&\; i\leq j)$ is a direct system of modules, we call it \emph{$\lambda$-continuous} if the index set $I$ is $\lambda$-complete and for each well-ordered ascending chain $(i_\alpha\mid\alpha<\tau)$ in $I$ of length $<\lambda$ we have
$$ M_{\sup i_\alpha} = \varinjlim_{\alpha<\tau} M_{i_\alpha}. $$

It is well-known that every module is the direct limit of a direct system of finitely presented modules. But if we want the direct system to be $\lambda$-continuous, we have to pass to $<\lambda$-presented modules in general. The following lemma is a slight modification of \cite[Proposition 7.15]{JL}.

\begin{lem} \label{lem:lambda_dirsys}
Let $M$ be any module and $\lambda$ an infinite regular cardinal. Then $M$ is the direct limit of a $\lambda$-continuous direct system of $<\lambda$-presented modules.
\end{lem}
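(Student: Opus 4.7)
\medskip

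The plan is to model the proof on the classical fact that every module is a direct limit of finitely presented modules (essentially \cite[Proposition 7.15]{JL}), but to index the direct system by pairs of $<\lambda$-subsets of generators and relations rather than by finite ones, exploiting the regularity of $\lambda$ to ensure closure under $<\lambda$-unions.

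Concretely, I would fix the canonical free presentation: let $F$ be the free module with basis $\{e_m \mid m \in M\}$, let $\pi\colon F\to M$ send $e_m \mapsto m$, and let $K = \Ker\pi$. Let $I$ be the set of all pairs $(X,Y)$ where $X \subseteq \{e_m \mid m\in M\}$ and $Y \subseteq K\cap\spanned{X}$ are both of cardinality $<\lambda$. Order $I$ componentwise by inclusion; it is clearly directed. For each $i=(X,Y)\in I$ set
$$
M_i = \spanned{X}/\spanned{Y},
$$
which is $<\lambda$-presented since $\spanned{X}$ is free on $|X|<\lambda$ generators and $\spanned{Y}$ is generated by $|Y|<\lambda$ elements. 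The inclusion $\spanned{X}\hookrightarrow F$ followed by $\pi$ factors through $M_i$ and gives a compatible cocone $\varphi_i\colon M_i \to M$; the transition maps $f_{ji}\colon M_i\to M_j$ for $i\le j$ are the evident ones.

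Next I would verify the three required properties. First, $\lambda$-completeness of $I$: for an ascending chain $(i_\alpha=(X_\alpha,Y_\alpha)\mid\alpha<\tau)$ of length $\tau<\lambda$, the pair $(\bigcup X_\alpha,\bigcup Y_\alpha)$ lies in $I$ by regularity of $\lambda$, and is visibly the supremum. Second, $\lambda$-continuity: since direct limits commute with forming submodules generated by a union of sets and with taking quotients by such,
$$
M_{\sup i_\alpha} = \spanned{\textstyle\bigcup X_\alpha}\big/\spanned{\textstyle\bigcup Y_\alpha} = \varinjlim_{\alpha<\tau} \spanned{X_\alpha}/\spanned{Y_\alpha} = \varinjlim_{\alpha<\tau} M_{i_\alpha}.
$$
Third, $M = \varinjlim_{i\in I} M_i$: surjectivity is immediate from the cocone since $m=\pi(e_m)$ lies in the image of $M_{(\{e_m\},\emptyset)}$. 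For injectivity at the colimit, if $\bar x\in M_i$ satisfies $\varphi_i(\bar x)=0$, lift $\bar x$ to some $x\in\spanned{X}$; then $x\in K\cap\spanned{X}$, so $j=(X,Y\cup\{x\})\in I$, and $f_{ji}(\bar x)=0$, showing $\bar x$ vanishes in the colimit.

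The main (mild) obstacle is the bookkeeping for $\lambda$-continuity: one has to set up the index set so that the generator part and the relation part are separately closed under $<\lambda$-unions, since otherwise the natural candidate for the supremum might fall outside the $<\lambda$-presented range or fail to agree with the direct limit of the chain. Parametrising by pairs $(X,Y)$ of subsets of the fixed free presentation $F\twoheadrightarrow M$, rather than by abstract $<\lambda$-presented submodules, circumvents this neatly and keeps the verification routine.
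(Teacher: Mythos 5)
Your proposal is correct and follows essentially the same route as the paper: both fix a free presentation of $M$ and index the system by pairs of $<\lambda$-sized subsets of generators and relations (the paper uses subsets of the bases of both free modules subject to the compatibility condition $f[R^{(X')}]\subseteq R^{(Y')}$, while you take relation elements directly from the kernel, a cosmetic difference), with regularity of $\lambda$ giving $\lambda$-completeness of the index set and continuity of the system. All three verifications you carry out are exactly the ones the paper leaves as ``straightforward to check.''
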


\begin{proof}
Fix a free presentation
$$ R^{(X)} \overset{f}\to R^{(Y)} \to M \to 0 $$
of $M$ and let $I$ be the following set:
$$ \bigl\{(X',Y')\in \Pot(X) \times \Pot(Y) \mid 
                      \vert X' \vert + \vert Y' \vert < \lambda \; \& \;
                      f\bigl[R^{(X')}\bigr] \subseteq R^{(Y')} \bigr\}. $$

It is straightforward to check that $I$ with the partial ordering by inclusion in both components is directed and $\lambda$-complete. If we now define $M_i$ as the cokernel of the map
$$ f \restriction R^{(X')} : R^{(X')} \to R^{(Y')} $$
for every $i = (X',Y') \in I$, it is easy to check that $(M_i\mid i \in I)$ together with the natural maps forms a $\lambda$-continuous direct system with $M$ as its direct limit.
\end{proof}

For every directed set $I$, there is an \emph{associated filter} $\mathfrak F_I$ on $(\Pot(I),\subseteq)$; namely the one with a basis consisting of the upper sets $\uparrow\! i = \{j \in I \mid j \geq i \}$ for all $i \in I$. That is
$$ \mathfrak F_I = \{X \subseteq I \mid (\exists i \in I)(\uparrow\! i \subseteq X) \}. $$
Recall that a filter $\mathfrak F$ on a power set is called \emph{$\lambda$-complete} if any intersection of less than $\lambda$ elements from $\mathfrak F$ is again in $\mathfrak F$.

\begin{lem} \label{lem:lambda_filt}
Let $(I,\leq)$ be a $\lambda$-complete directed set. Then any subset $J \subseteq I$ \st $\vert J \vert < \lambda$ has an upper bound in $I$. In particular, the associated filter $\mathfrak F_I$ is $\lambda$-complete, and it is a principal filter \iff $(I, \leq)$ has a (unique) maximal element.
\end{lem}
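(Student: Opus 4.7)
The plan is to prove the three assertions in turn, as the latter two follow from the first rather quickly.

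For the first claim, I would argue by transfinite induction on $\kappa = |J|$, writing $J = \{j_\alpha \mid \alpha < \kappa\}$ and constructing an ascending chain $(i_\alpha \mid \alpha < \kappa)$ in $I$ with $i_\alpha \geq j_\alpha$ and $i_\alpha \geq i_\beta$ for all $\beta < \alpha$. At successor steps $\alpha = \beta + 1$, directedness of $I$ supplies an upper bound of $\{i_\beta, j_\alpha\}$. At a limit step $\alpha < \kappa < \lambda$, the chain $(i_\beta \mid \beta < \alpha)$ is well-ordered ascending of length $< \lambda$, so $\lambda$-completeness gives a supremum $s_\alpha$, and directedness again yields an element $i_\alpha$ above $s_\alpha$ and $j_\alpha$. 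The desired upper bound of $J$ is then $i_{\kappa - 1}$ if $\kappa$ is a successor, or $\sup_{\alpha < \kappa} i_\alpha$ (which again exists because $\kappa < \lambda$) if $\kappa$ is a limit.

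For the second claim, let $\{X_k \mid k \in K\}$ be a family of fewer than $\lambda$ sets in $\mathfrak F_I$. For each $k$, pick $i_k \in I$ with $\uparrow\! i_k \subseteq X_k$. The set $\{i_k \mid k \in K\}$ has cardinality $< \lambda$, so by the first part it has an upper bound $i \in I$; then $\uparrow\! i \subseteq \uparrow\! i_k \subseteq X_k$ for every $k$, hence $\uparrow\! i \subseteq \bigcap_k X_k$, showing $\bigcap_k X_k \in \mathfrak F_I$.

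For the final equivalence, suppose first $I$ has a maximal element $m$. Given any $j \in I$, directedness provides some $k$ with $k \geq j$ and $k \geq m$; maximality forces $k = m$, so $m \geq j$ and $m$ is actually the greatest element. Consequently $\uparrow\! m = \{m\}$ and for every $j \in I$ we have $m \in \uparrow\! j$, so $\mathfrak F_I$ coincides with the principal filter $\{X \subseteq I \mid m \in X\}$ generated by $\{m\}$. Conversely, if $\mathfrak F_I$ is principal with generator $X_0$, then $X_0 \in \mathfrak F_I$ yields some $i \in I$ with $\uparrow\! i \subseteq X_0$, while $\uparrow\! i \in \mathfrak F_I$ gives $\uparrow\! i \supseteq X_0$; thus $X_0 = \uparrow\! i$. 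For every $j \in I$, $\uparrow\! j \in \mathfrak F_I$ contains $X_0$, so $i \geq j$, meaning $i$ is the greatest (hence the unique maximal) element.

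I do not anticipate a real obstacle here; the only mild subtlety is remembering that in a directed set maximal elements are automatically greatest, which makes the uniqueness in ``(unique) maximal element'' free of charge.
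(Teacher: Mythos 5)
Your proof is correct and follows essentially the same route as the paper: the paper constructs the chain $(k_\alpha)$ with $k_\alpha$ a common upper bound of $j_\alpha$ and $\sup_{\beta<\alpha}k_\beta$ and takes $\sup_{\beta<\tau}k_\beta$, which is exactly your argument with the successor/limit cases merged, and it dismisses the remaining two claims as easy, which you verify in the expected way.
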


\begin{proof}
We can well-order $J$; that is $J = \{j_\alpha \mid \alpha < \tau\}$ for some $\tau < \lambda$. Then we construct by induction a chain $(k_\alpha\mid\alpha<\tau)$ in $I$ \st $k_0 = j_0$ and $k_\alpha$ is a common upper bound for $j_\alpha$ and $\sup_{\beta<\alpha} k_\beta$. Then $\sup_{\beta<\tau} k_\beta$ is clearly an upper bound for $J$. The rest is also easy.
\end{proof}


\subsection{Filtrations and cotorsion pairs}

Given a module $M$ and an ordinal number $\sigma$, an ascending chain $\mathcal F = (M_\alpha \mid \alpha \leq \sigma)$ of submodules of $M$ is called a \emph{filtration of $M$} if $M_0 = 0$, $M_\sigma = M$ and $\mathcal F$ is \emph{continuous}---that is, $\bigcup _{\alpha < \beta} M_\alpha = M_\beta$ for each limit ordinal $\beta\leq\sigma$.

Furthermore, let a class $\mathcal C\subseteq\ModR$ be given. Then $\mathcal F$ is said to be a \emph{$\mathcal C$-filtration} if it has the extra property that each its consecutive factor $M_{\alpha+1}/M_\alpha$, $\alpha<\sigma$, is isomorphic to a module from $\mathcal C$. A module $M$ is called \emph{$\mathcal C$-filtered} if it admits (at least one) $\mathcal C$-filtration.

\smallskip

Let us turn our attention to cotorsion pairs now. By a \emph{cotorsion pair} in $\ModR$, we mean a pair $(\mathcal A, \mathcal B)$ of classes of right $R$-modules such that $\mathcal A = \text{Ker\,Ext}^1_R(-,\mathcal B)$ and $\mathcal B = \text{Ker\,Ext}^1_R(\mathcal A,-)$. We say that a cotorsion pair $(\mathcal A, \mathcal B)$ is \emph{hereditary} provided that $\mathcal A$ is closed under kernels of epimorphisms or, equivalently, $\mathcal B$ is closed under cokernels of monomorphisms.

If $(\mathcal A, \mathcal B)$ is a cotorsion pair, then the class $\mathcal A$ is always closed under arbitrary direct sums and contains all projective modules. Dually, the class $\mathcal B$ is closed under direct products and it contains all injective modules.
Also, every class of modules $\mathcal C$ determines two distinguished cotorsion pairs---the cotorsion pair \emph{generated} by $\mathcal C$, that is the one with the right-hand class $\mathcal B$ equal to $\Ker\Ext^1_R(\mathcal C,-)$, and dually the cotorsion pair \emph{cogenerated}\footnote{It may cause some confusion that the meaning of the terms \emph{generated} and \emph{cogenerated} is sometimes swapped in the literature. Our terminology follows the monograph~\cite{GT}.}
by $\mathcal C$---the one with the left-hand class $\mathcal A$ equal to Ker\,Ext${}^1_R(-,\mathcal C)$. We say that $(\mathcal A, \mathcal B)$ is of \emph{finite} or \emph{countable} \emph{type} if it is generated by a set of strongly finitely or strongly countably presented modules, respectively.

We say that a cotorsion pair $(\mathcal A, \mathcal B)$ is \emph{complete} if for every module $M\in\ModR$, there is a short exact sequence $0 \to B \to A \to M \to 0$ such that $A \in \mathcal A$ and $B \in \mathcal B$. The map $A \to M$ is then called a \emph{special $\mathcal A$-precover} of $M$. It is well-known that this condition is equivalent to the dual one saying that $\mathcal B$ provides for \emph{special $\mathcal B$-preenvelopes}; thus, for every $M\in\ModR$ there is in this case also a short exact sequence $0 \to M \to B' \to A' \to 0$ with $A'\in \mathcal A$ and $B' \in \mathcal B$.

Finally, a cotorsion pair is said to be \emph{projective} in the sense of~\cite{AB} if it is hereditary, complete, and $\mathcal A\cap\mathcal B$ is precisely the class of all projective modules. It is an easy exercise to prove that $(\mathcal A, \mathcal B)$ is projective \iff it is complete and $\mathcal B$ contains all projective modules and has the ``two out of three'' property---that is: all three modules in a short exact sequence are in $\mathcal B$ provided that two of them are in $\mathcal B$. To conclude the discussion of terminology concerning cotorsion pairs, we recall that projective cotorsion pairs over self-injective artin algebras are (with a slightly different but equivalent definition) called \emph{thick} in~\cite{KS}.


\subsection{Definable classes and coherent functors}

We will also need the notion of a definable class of modules. First recall that a covariant additive functor from $\ModR$ to the category of abelian groups is called \emph{coherent} if it commutes with arbitrary products and direct limits. The following important characterization was obtained by Crawley-Boevey:

\begin{lem} \label{lem:coherent} \cite[\S 2.1, Lemma 1]{C}
A functor $F: \ModR \to \Ab$ is coherent if and only if it is isomorphic to $\Coker \Hom_R(f,-)$ for some homomorphism $f: X \to Y$ between  finitely presented modules $X$ and $Y$.
\end{lem}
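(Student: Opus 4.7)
The plan is to prove the two directions separately. For the easy direction, I would show that any $F \cong \Coker \Hom_R(f,-)$, with $f: X \to Y$ a homomorphism between finitely presented modules, is coherent. The functor $\Hom_R(X,-)$ always commutes with arbitrary products by the universal property of products in the second argument, and it commutes with direct limits since $X$ is finitely presented. The same holds for $\Hom_R(Y,-)$. Cokernels, being colimits, commute with colimits in any cocomplete category (in particular with direct limits), and cokernels in $\Ab$ commute with products because products are exact in $\Ab$. Hence both properties are inherited by $F = \Coker \Hom_R(f,-)$.

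For the nontrivial direction, given a coherent $F$, the plan is to construct a homomorphism $f: X \to Y$ between finitely presented modules such that $F \cong \Coker \Hom_R(f,-)$. The starting observation is that $F$ is completely determined by its restriction to the full subcategory of finitely presented modules: every $M \in \ModR$ can be written as $M = \varinjlim M_i$ with $M_i$ finitely presented, and then $F(M) = \varinjlim F(M_i)$ since $F$ commutes with direct limits. So it suffices to produce, in the appropriate functor category, a presentation by representables $\Hom_R(Y,-) \to \Hom_R(X,-) \to F \to 0$ with $X, Y$ finitely presented; extending by direct limits then yields the desired $F \cong \Coker \Hom_R(f,-)$ on all of $\ModR$, using that $\Hom_R(X,-)$ and $\Hom_R(Y,-)$ both commute with direct limits.

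To produce such a presentation I would proceed in two stages. First, find a finitely presented $X$ and an element $\xi \in F(X)$ so that the Yoneda-induced natural transformation $\tau: \Hom_R(X,-) \to F$, $\phi \mapsto F(\phi)(\xi)$, is an epimorphism of functors. Second, find a finitely presented $Y$ together with a morphism $f: X \to Y$ so that the pre-composition map $\Hom_R(f,-): \Hom_R(Y,-) \to \Hom_R(X,-)$ surjects onto $\ker \tau$. The main obstacle is the first stage: for each pair $(Z,\eta)$ with $Z$ finitely presented and $\eta \in F(Z)$, Yoneda supplies a natural transformation $\Hom_R(Z,-) \to F$, and the collection of all such transformations covers $F$ by the direct-limit observation above; what is not clear is how to collapse this possibly large collection to a single finitely presented generator. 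This is precisely where the hypothesis that $F$ commutes with products enters in an essential way, forcing $F$ to be ``small'' enough that such a single generator exists. Verifying this---equivalently, proving that coherent functors on $\ModR$ correspond bijectively to finitely presented functors on the full subcategory of finitely presented modules---is the technically demanding step I expect to dominate the argument.
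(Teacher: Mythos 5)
The paper itself does not prove this lemma; it is quoted verbatim from Crawley-Boevey \cite[\S 2.1, Lemma 1]{C}, so there is no internal proof to compare against. Your easy direction is complete and correct: $\Hom_R(X,-)$ and $\Hom_R(Y,-)$ preserve products always and direct limits because $X,Y$ are finitely presented, and pointwise cokernels commute with direct limits (cokernels are colimits) and with products (products are exact in $\Ab$).

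The hard direction, however, has a genuine gap exactly where you flag it: you never produce the single finitely presented generator, you only assert that product-preservation should force its existence. The missing idea is the following. Take a representative set $\{(Z_i,\eta_i)\}_{i\in I}$ of all pairs with $Z_i$ finitely presented and $\eta_i\in F(Z_i)$, and form $P=\prod_{i\in I}Z_i$. Since $F$ preserves products, $F(P)=\prod_i F(Z_i)$ contains the element $\eta=(\eta_i)_i$. Writing $P=\varinjlim_j P_j$ as a direct limit of finitely presented modules and using that $F$ preserves direct limits, $\eta$ is the image of some $\xi\in F(X)$ for a finitely presented $X=P_j$ with canonical map $g\colon X\to P$. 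Then for every $i$ one has $\eta_i=F(\pi_i g)(\xi)$, so the Yoneda transformation $\tau\colon\Hom_R(X,-)\to F$ induced by $\xi$ is surjective on every finitely presented module, hence (by commuting both sides with direct limits) on all of $\ModR$. This is the step where product-preservation is used, and without it the argument does not close: a functor like $\bigoplus_{n<\omega}\Hom_R(R,-)$ preserves direct limits and is a quotient of a coproduct of representables but admits no single finitely presented generator. For the second stage you need the analogous fact for $K=\Ker\tau$; note that $K$ again preserves products and direct limits (kernels commute with products, and with direct limits because these are exact in $\Ab$), so the same construction yields a finitely presented $Y$ and $f\in K(Y)\subseteq\Hom_R(X,Y)$ with $\Img\Hom_R(f,-)=K$, giving $F\cong\Coker\Hom_R(f,-)$. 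With these two applications of the product-plus-direct-limit trick spelled out, your outline becomes a complete proof; as written, the central mechanism is absent.
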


A class $\mathcal C \subseteq \ModR$ is called \emph{definable} if it satisfies one of the following three equivalent conditions:
\begin{enumerate}
\item $\mathcal C$ is closed under taking arbitrary products, direct limits, and pure submodules;
\item $\mathcal C$ is defined by vanishing of some set of coherent functors;
\item $\mathcal C$ is defined in the first order language of $R$-modules by satisfying some implications $\varphi(\bar x) \to \psi(\bar x)$ where $\varphi(\bar x)$ and $\psi(\bar x)$ are primitive positive formulas.
\end{enumerate}
Primitive positive formulas (pp-formulas for short) are first-order language formulas of the form $(\exists \bar y)(\bar x A = \bar y B)$ for some matrices $A,B$ over $R$. For this paper, the most important consequence of $(3)$ is that definable classes are closed under taking elementarily equivalent modules since they are definable in the first-order language. This in particular implies the well-known fact that a definable class is determined by the pure-injective modules it contains since any module is elementarily equivalent to its pure-injective hull. For equivalence between the three definitions and more details, we refer to~\cite{P}, \cite[\S 2.3]{C}, and~\cite[Section 1]{Z}.


\subsection{Inverse limits and the Mittag-Leffler condition}

The computation of $\Ext$ groups can sometimes be reduced to the computation of the derived functors of inverse limit. We will recall this here only for countable inverse systems. For more details on the topic see~\cite[\S 3.5]{W}. Let
$$ \dotsb \to H_{n+1} \overset{h_n}\to H_n \to \dotsb \to H_2 \overset{h_1}\to H_1 \overset{h_0}\to H_0 $$
be a countable inverse system of abelian groups---a tower in the terminology of~\cite{W}. Then its inverse limit $\varprojlim H_n$ and the \emph{first derived functor of the inverse limit}, $\varprojlim^1 H_n$, can be computed using the exact sequence
$$
0 \to \varprojlim H_n \to \prod H_n \overset{\Delta}\to \prod H_n \to {\varprojlim}^1 H_n \to 0
$$
where $\Delta((x_n)_{n<\omega}) = (x_n - h_n(x_{n+1}))_{n<\omega}$. The first derived functor is closely related to the fact that inverse limit is not exact---it is only left exact. Using the exact sequence above and the snake lemma, one easily observes that, given a countable inverse system of short exact sequences $0 \to H_n \to K_n \to L_n \to 0$, there is a canonical long exact sequence
$$
0 \to \varprojlim H_n \to \varprojlim K_n \to \varprojlim L_n \to
{\varprojlim}^1 H_n \to {\varprojlim}^1 K_n \to {\varprojlim}^1 L_n \to 0
$$
In particular, $\varprojlim^1$ is right exact on countable inverse systems.

In practice, one is often interested whether or not $\varprojlim^1 H_n = 0$. To decide this can sometimes be tedious, but there is a useful tool---the notion of Mittag-Leffler inverse systems. Given a countable inverse system of abelian groups $(H_n, h_n \mid n<\omega)$ as above, we say that it is \emph{Mittag-Leffler} if for each $n$ the descending chain
$$ H_n \supseteq h_n(H_{n+1}) \supseteq \dotsb \supseteq h_n h_{n+1} \cdots h_{k-1}(H_k) \supseteq \dotsb $$
is stationary. This occurs, for example, if all the maps $h_n$ are onto. The following important result gives a connection to $\varprojlim^1$:

\begin{prop} \label{prop:ML}
Let $(H_n,h_n \mid n<\omega)$ be a countable inverse system of abelian groups. Then the following hold:
\begin{enumerate}
\item \cite[Proposition 3.5.7]{W} If $(H_n,h_n)$ is Mittag-Leffler, then $\varprojlim^1 H_n = 0$.
\item  \cite[Theorem 1.3]{ABH} $(H_n,h_n)$ is Mittag-Leffler \iff $\varprojlim^1 H_n^{(\omega)} = 0$.
\end{enumerate}
\end{prop}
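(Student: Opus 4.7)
Both parts are classical. For (1), my plan is to exploit the exact sequence
$$0 \to \varprojlim H_n \to \prod H_n \overset{\Delta}\to \prod H_n \to {\varprojlim}^1 H_n \to 0$$
recalled just above, so that showing ${\varprojlim}^1 H_n = 0$ reduces to showing $\Delta$ is surjective, i.e.\ that the system of equations $x_n - h_n(x_{n+1}) = y_n$ is solvable for any given $(y_n)$. First I would dispose of the special case in which every $h_n$ is surjective: here one chooses $x_0$ arbitrarily and then recursively defines $x_{n+1}$ as any preimage of $x_n - y_n$, which is possible by surjectivity. For the general Mittag-Leffler case, set $H_n' = \bigcap_{k \geq n} h_n h_{n+1}\cdots h_{k-1}(H_k)$; the stabilization hypothesis means $H_n'$ is realised at some finite stage, from which one checks that the restricted transition maps $h_n|_{H'_{n+1}}: H'_{n+1} \to H'_n$ are surjective. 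The quotient system $(H_n/H_n')$ has transition maps that are eventually zero on each component, so both $\varprojlim$ and ${\varprojlim}^1$ of it vanish by an elementary direct argument. A diagram chase in the snake-lemma long exact sequence associated to $0 \to H_n' \to H_n \to H_n/H_n' \to 0$ then gives ${\varprojlim}^1 H_n = 0$.

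For (2), the implication ``$\Rightarrow$'' follows immediately from (1) once I verify that the Mittag-Leffler property is inherited by the system $(H_n^{(\omega)}, h_n^{(\omega)})$; this is true because taking direct sums commutes with images, so the stable image of the new system is just $(H_n')^{(\omega)}$. The converse is the substantial direction and I would argue by contrapositive. Assume the system is not Mittag-Leffler, so there is an index $n_0$ and a strictly descending chain $H_{n_0} \supsetneq h_{n_0}(H_{n_0+1}) \supsetneq h_{n_0}h_{n_0+1}(H_{n_0+2}) \supsetneq \cdots$. Pick witnesses $a_k \in h_{n_0}\cdots h_{k-1}(H_k) \setminus h_{n_0}\cdots h_k(H_{k+1})$, and for each $k$ choose lifts $b_{n,k} \in H_n$ with $h_{n_0}\cdots h_{n-1}(b_{n,k}) = a_k$ for $n \leq k$ and $b_{n,k}=0$ for $n > k$. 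Bundle these lifts into a single element $\xi = (\xi_n)_n \in \prod_n H_n^{(\omega)}$, putting $b_{n,k}$ into the $k$-th coordinate of $\xi_n$. I then claim $\xi$ is not in the image of $\Delta$: any would-be solution $(x_n) \in \prod_n H_n^{(\omega)}$, projected to the $k$-th coordinate and then pushed down to $H_{n_0}$ via $h_{n_0}\cdots h_{n-1}$, would force $a_k$ to lie in $h_{n_0}\cdots h_m(H_{m+1})$ for every $m \geq k$, contradicting the choice of $a_k$.

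The main technical obstacle I anticipate is precisely the bookkeeping for the counterexample in part (2): one must arrange the coordinates in $\prod_n H_n^{(\omega)}$ so that no single solution $(x_n)$ can accommodate the failures at all levels $k$ simultaneously. This is where the direct sum $H_n^{(\omega)}$ is essential, as a single copy of $H_n$ would permit a solution to ``recycle'' coordinates and absorb the gaps; with countably many independent copies each obstruction can be isolated in its own coordinate. Once this combinatorial arrangement is in place, checking unsolvability amounts to applying the transition maps to each coordinate individually and comparing with the descending chain, so no further genuine difficulty remains.
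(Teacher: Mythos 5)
The paper itself gives no proof of this proposition (it is quoted from \cite[Proposition 3.5.7]{W} and \cite[Theorem 1.3]{ABH}), so the only issue is whether your argument is correct. Part (1) and the forward implication of (2) are fine: the reduction to the surjective subsystem of stable images $H_n'$, the T-nilpotency of the quotient system $(H_n/H_n')$, and the six-term exact sequence are exactly the standard argument, and the Mittag-Leffler property clearly passes from $(H_n)$ to $(H_n^{(\omega)})$ because images commute with direct sums.

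The converse direction of (2), however, has a genuine gap in the construction of the witness. As you define it, $\xi_n=(b_{n,k})_{k<\omega}$ has $b_{n,k}\neq 0$ for \emph{every} $k\geq n$, since each $b_{n,k}$ lifts $a_k\neq 0$; hence $\xi_n$ does not lie in the direct sum $H_n^{(\omega)}$ and $\xi\notin\prod_n H_n^{(\omega)}$ at all. Moreover, even setting that aside, telescoping the equations $x_n^{(k)}-h_n(x_{n+1}^{(k)})=b_{n,k}$ from $n_0$ up to $m\geq k$ yields $x_{n_0}^{(k)}-(k-n_0+1)\,a_k\in h_{n_0}\cdots h_m(H_{m+1})$, because your $\xi$ contributes a lift of $a_k$ at every level $n_0\le n\le k$; this does not force $a_k\in h_{n_0}\cdots h_m(H_{m+1})$, and no purely coordinatewise contradiction can be expected, since for the full product $\prod$ in place of $\bigoplus$ such an element may well lie in the image of $\Delta$. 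The correct arrangement is to place a single lift $z_k\in H_k$ of $a_k$ into the $k$-th coordinate of $\xi_k$ and make all other entries of $\xi$ zero; then each $\xi_n$ has exactly one nonzero coordinate, telescoping gives $x_{n_0}^{(k)}\in a_k+h_{n_0}\cdots h_k(H_{k+1})$, hence $x_{n_0}^{(k)}\neq 0$ for infinitely many $k$ (note also that failure of Mittag-Leffler only provides infinitely many strict inclusions, so you must reindex along the subsequence of strict drops rather than assume the chain is strictly descending). The contradiction then comes from $x_{n_0}$ having to lie in $H_{n_0}^{(\omega)}$, i.e.\ to be finitely supported --- this is the point where the direct sum is actually used, and it is missing from your final step as written.
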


We will also use a related notion of T-nilpotency. We say that $(H_n,h_n)_{n<\omega}$ is {\it T-nilpotent} if for each $n$ there exists $k>n$ \st the composition $H_k\to H_n$ is zero.


\section{Filter-closed classes and factorization systems}
\label{sec:filter}

We start with analyzing properties of modules lying in $\Ker \Ext^1_R(-,\mathcal G)$ for a class $\mathcal G$ closed under arbitrary direct products and unions of well-ordered chains. We will always assume in this case that $\mathcal G$ is closed under isomorphic images and that $0 \in \mathcal G$, since the trivial module could be viewed as a product of an empty system. As an application to keep in mind, such classes occur as right-hand classes of cotorsion pairs satisfying the hypotheses of TCMC.

\begin{defn} \label{def:f_prod}
Let $\mathfrak F$ be a filter on the power set $\Pot(X)$ for some set $X$, and let $\{M_x\mid x\in X\}$ be a set of modules. Set $M = \prod_{x\in X}M_x$. Then the \emph{$\mathfrak F$-product} $\Sigma_\mathfrak{F} M$ is the submodule of $M$ \st
$$ \Sigma_\mathfrak{F} M = \{m \in M \mid z(m) \in \mathfrak{F} \} $$
where for an element $m = (m_x\mid x \in X) \in M$, we denote by $z(m)$ its zero set $\{x \in X \mid m_x = 0\}$.

The module $M / \Sigma_\mathfrak{F} M$ is then called an \emph{$\mathfrak F$-reduced product}. Note that for $a,b \in M$, we have an equality $\bar a = \bar b$ in the $\mathfrak F$-reduced product \iff $a$ and $b$ agree on a set of indices that is in the filter $\mathfrak F$.

In the case that $M_x = M_y$ for every pair of elements $x,y\in X$, we speak of an \emph{$\mathfrak F$-power} and an \emph{$\mathfrak F$-reduced power} (of the module $M_x$) instead of an $\mathfrak F$-product and an $\mathfrak F$-reduced product, respectively.

Finally, a nonempty class of modules $\mathcal G$ is called \emph{filter-closed}, if it is closed under arbitrary $\mathfrak F$-products (for any set $X$ and an arbitrary filter $\mathfrak F$ on $\Pot (X)$).
\end{defn}

\begin{lem} \label{lem:filter_cl}
Let $\mathcal G$ be a class of modules closed under arbitrary direct products and unions of well-ordered chains. Then $\mathcal G$ is filter-closed.
\end{lem}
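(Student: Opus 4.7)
The plan is to realize $\Sigma_\mathfrak{F} M$ as a directed union of products, all lying in $\mathcal G$ by hypothesis, and then to convert this directed union into a continuous well-ordered ascending chain by transfinite induction on the character of the filter.

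First I set $M = \prod_{x \in X} M_x$ and, for each $F \in \mathfrak F$, consider $P_F := \{m \in M \mid m_x = 0 \text{ for all } x \in F\}$. This submodule is isomorphic to $\prod_{x \in X\setminus F} M_x$ and hence lies in $\mathcal G$. Since $\mathfrak F$ is upward closed, $\Sigma_\mathfrak{F} M = \bigcup_{F \in \mathfrak F} P_F$; the identity $P_F + P_{F'} = P_{F \cap F'}$ together with closure of $\mathfrak F$ under finite intersections makes this a directed union of modules in $\mathcal G$.

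Next I induct on the character $\chi(\mathfrak F)$, i.e.\ the minimum cardinality of a filter base of $\mathfrak F$. If $\chi(\mathfrak F)$ is finite, then $\mathfrak F$ is principal (intersecting a finite base yields a generator), so $\Sigma_\mathfrak{F} M = P_{F_0}$ lies in $\mathcal G$ directly. For the inductive step, suppose $\chi(\mathfrak F) = \kappa$ is infinite, fix a base $\{B_\alpha \mid \alpha < \kappa\}$, and for each $\alpha \le \kappa$ let $\mathfrak F^{(\alpha)}$ denote the filter generated by $\{B_\beta \mid \beta < \alpha\}$. The finite intersections of $\{B_\beta \mid \beta < \alpha\}$ form a base of $\mathfrak F^{(\alpha)}$ of cardinality $<\kappa$ whenever $\alpha < \kappa$, so the inductive hypothesis gives $\Sigma_{\mathfrak F^{(\alpha)}} M \in \mathcal G$.

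To finish I check that the chain $(\Sigma_{\mathfrak F^{(\alpha)}} M)_{\alpha \le \kappa}$ is continuous at limit ordinals: any finite intersection $B_{\beta_1} \cap \dotsb \cap B_{\beta_n}$ with each $\beta_i < \alpha$ already lies in $\mathfrak F^{(\beta)}$ for some $\beta < \alpha$, so $\mathfrak F^{(\alpha)} = \bigcup_{\beta < \alpha} \mathfrak F^{(\beta)}$ and therefore $\Sigma_{\mathfrak F^{(\alpha)}} M = \bigcup_{\beta < \alpha} \Sigma_{\mathfrak F^{(\beta)}} M$. Hence $\Sigma_\mathfrak{F} M = \Sigma_{\mathfrak F^{(\kappa)}} M$ is the union of a well-ordered ascending chain of modules in $\mathcal G$, and so lies in $\mathcal G$ by assumption.

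The obstacle I would expect is the temptation to well-order $\mathfrak F$ itself and adjoin the $P_F$'s one at a time: the successor step then requires $N_\alpha + P_{F_\alpha}$ to be in $\mathcal G$, but the hypotheses offer no closure under sums of two submodules. Taking the character of the filter (rather than $|\mathfrak F|$) as the induction parameter keeps every relevant sub-filter of strictly smaller character than $\mathfrak F$ and reduces all successor steps back to the inductive hypothesis.
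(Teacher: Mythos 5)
Your proof is correct and follows essentially the same route as the paper: realize $\Sigma_{\mathfrak F}M$ as the directed union of the submodules $P_F\cong\prod_{x\in X\setminus F}M_x$ and then reduce the directed union to well-ordered unions by transfinite induction. The paper simply cites the general fact that closure under unions of well-ordered chains implies closure under directed unions, whereas you carry out that induction explicitly via the character of the filter; the content is the same.
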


\begin{proof}
It is just a matter of straightforward induction to prove that the closure under unions of well-ordered chains implies closure under arbitrary directed unions---see for instance~\cite[Corollary 1.7]{AR} which is easily adapted for unions. Moreover, any $\mathfrak F$-product is just the directed union of products of the modules with indices from the complementary sets to those belonging to $\mathfrak F$.
\end{proof}

In the next few paragraphs, we will show that filter-closedness of $\mathcal G$ forces existence of certain factoring systems inside modules from $\Ker \Ext^1_R(-,\mathcal G)$. Let us note that the following lemma presents the crucial technical step in proving the Countable Telescope Conjecture.

\begin{lem} \label{lem:factor}
Let $\mathcal G$ be a filter-closed class of modules. Let $\lambda$ be an \emph{uncountable} regular cardinal and $(M, f_i\mid i\in I)$ be a direct limit of a $\lambda$-continuous direct system $(M_i, f_{ji}\mid i\leq j)$ indexed by a set $I$ and consisting of $<\lambda$-generated modules.

Assume that $\Ext^1_R(M,\mathcal G) = 0$. Then there is a $\lambda$-closed cofinal subset $J \subseteq I$ \st every homomorphism from $M_j$ to $B$ factors through $f_j$ whenever $j \in J$ and $B \in \mathcal G$.
\end{lem}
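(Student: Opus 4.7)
The plan is to define
\[
J := \{\,j \in I \mid \text{every } g : M_j \to B \text{ with } B \in \mathcal G \text{ factors through } f_j\,\}
\]
and to prove that $J$ is both cofinal and $\lambda$-closed in $I$.

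For the cofinality of $J$, I would argue by contradiction: suppose some $i^* \in I$ admits no $j \ge i^*$ in $J$, and for each $j \ge i^*$ choose a witness $g_j : M_j \to B_j$ with $B_j \in \mathcal G$ that does not factor through $f_j$. Set $B = \prod_{j \ge i^*} B_j$ and let $\mathfrak F = \mathfrak F_{I_{\ge i^*}}$ be the associated filter on the directed set $I_{\ge i^*}$; by filter-closedness of $\mathcal G$, the submodule $\Sigma_\mathfrak F B$ lies in $\mathcal G$. Define ``diagonal'' maps $\phi_j : M_j \to B$ by $(\phi_j(m))_k := g_k(f_{kj}(m))$ for $k \ge j$ and $0$ otherwise; a direct computation shows these are compatible modulo $\Sigma_\mathfrak F B$, and hence assemble into a single map $\bar\phi : M \to B/\Sigma_\mathfrak F B$. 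Applying $\Hom_R(M,-)$ to $0 \to \Sigma_\mathfrak F B \to B \to B/\Sigma_\mathfrak F B \to 0$ and using $\Ext^1_R(M,\Sigma_\mathfrak F B) = 0$, we may lift $\bar\phi$ to some $\tilde\phi : M \to B$. The maps $\psi_j := \pi_j \circ \tilde\phi \circ f_j : M_j \to B_j$ then factor through $f_j$ by construction, and they agree with $g_j$ ``along the filter'' (namely, $\pi_l\tilde\phi(f_j(m)) = g_l(f_{lj}(m))$ for $l$ in an upper set depending on $m$). Combining the $<\lambda$-generatedness of some $M_{j^\circ}$ with the $\lambda$-completeness of $I$ (Lemma~\ref{lem:lambda_filt}) to absorb a $<\lambda$-sized set of ``critical'' indices into a single $k^\sharp \in I$, I expect to identify a $j^\circ \ge i^*$ at which $\psi_{j^\circ} = g_{j^\circ}$ on all of $M_{j^\circ}$, contradicting the non-factorization of $g_{j^\circ}$.

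For the $\lambda$-closedness of $J$, take a chain $(j_\alpha)_{\alpha < \tau}$ in $J$ with $\tau < \lambda$ and supremum $j$, so $M_j = \varinjlim_\alpha M_{j_\alpha}$ by $\lambda$-continuity. Given $g : M_j \to B$ with $B \in \mathcal G$, each restriction $g \circ f_{j j_\alpha}$ factors as $h_\alpha \circ f_{j_\alpha}$ for some $h_\alpha : M \to B$; assemble $\vec h = (h_\alpha)_\alpha : M \to B^\tau$, and use the tail filter $\mathfrak F_\tau$ on $\tau$. A calculation shows that $\vec h \circ f_j - \Delta g$ (with $\Delta : B \to B^\tau$ the diagonal) takes values in $\Sigma_{\mathfrak F_\tau} B^\tau \in \mathcal G$, and an analogous appeal to $\Ext^1_R(M, \Sigma_{\mathfrak F_\tau} B^\tau) = 0$ together with the same extraction technique should yield an honest $h : M \to B$ with $h \circ f_j = g$.

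The main obstacle in both steps will be the extraction of a genuine factorization from what is only ``factorization modulo $\Sigma_\mathfrak F$''. The $\Ext^1$-vanishing provides a lift from $B/\Sigma_\mathfrak F$ to $B$, but this lift is required to agree with the given witness data only on a filter-large set of indices rather than on the nose. Exploiting (i) filter-closedness to produce the relevant $\mathcal G$-module whose $\Ext^1$ vanishes, (ii) $<\lambda$-generatedness of the $M_j$'s to reduce the discrepancy to a $<\lambda$-sized set of generators, and (iii) $\lambda$-completeness of $I$ to absorb that set into a single upper bound, is precisely where the three hypotheses of the lemma interact in order to close the argument.
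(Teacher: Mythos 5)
Your overall strategy for the cofinality step --- form the product of the chosen witnesses, pass to the reduced product over the associated filter, lift along $\rho$ using $\Ext^1_R(M,\Sigma_{\mathfrak F}B)=0$, and compare the lift with the witness data --- is the same as the paper's. However, the step where you ``expect to identify a $j^\circ \ge i^*$ at which $\psi_{j^\circ}=g_{j^\circ}$'' hides the one genuinely nontrivial idea, and the tool you propose for it (absorbing the $<\lambda$ critical indices of a generating set into a single $k^\sharp$) does not deliver it. What that absorption gives is: for each $j$ there is $s(j)\ge j$ such that $\pi_k\circ\tilde\phi\circ f_j=g_k\circ f_{kj}$ for all $k\ge s(j)$. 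To contradict the choice of $g_{j^\circ}$ you need the case $k=j^\circ$, which is available only if $s(j^\circ)=j^\circ$; a single application of $\lambda$-completeness cannot force this, and indeed this is exactly the obstruction that limits the countable case (Lemma~\ref{lem:factor2}) to a Mittag-Leffler-type conclusion rather than genuine factorization. The paper closes the gap by iterating: set $j_0=i^*$, $j_{n+1}=s(j_n)$, and $j=\sup_n j_n$, which exists precisely because $\lambda$ is uncountable and $I$ is $\lambda$-complete. Then every $m\in M_j$ equals $f_{jj_n}(m')$ for some $n$ by $\lambda$-continuity, and for $k\ge j\ge s(j_n)$ one computes $\pi_k\tilde\phi f_j(m)=g_kf_{kj}(m)$; taking $k=j$ shows $g_j=\pi_j\circ\tilde\phi\circ f_j$ factors through $f_j$. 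This $\omega$-fold iteration followed by continuity at the countable supremum is the only place where uncountability of $\lambda$ enters, and it is absent from your sketch.

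A second, smaller problem: you aim to prove that the full set $J$ of good indices is $\lambda$-closed, which is more than the lemma asserts, and your reduced-power argument over the tail filter on $\tau$ does not establish it. That filter is not $\lambda$-complete when $\omega\le\tau<\lambda$, so you cannot absorb the $<\lambda$ many critical indices (one per generator of $M_j$) into a filter set; what you actually obtain is $\bar\Delta\circ g=(\rho\circ\vec h)\circ f_j$ with $\bar\Delta\colon B\to B^\tau/\Sigma_{\mathfrak F_\tau}B^\tau$ merely injective, which does not produce a factorization of $g$ itself. The paper sidesteps both difficulties at once by running a single global contradiction: it assumes the set $S$ of bad indices meets every $\lambda$-closed cofinal subset, and then exhibits the smaller set $J=\{i\mid \pi_k\circ\tilde\phi\circ f_i=g_k\circ f_{ki}\text{ for all }k\ge i\}$, which is disjoint from $S$, is $\lambda$-closed immediately from $\lambda$-continuity (the defining condition is checked on elements coming from earlier stages of a chain), and is cofinal by exactly the iteration argument above. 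I recommend restructuring your proof along these lines.
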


\begin{proof}
Suppose that the claim of the lemma is not true. Then the set
$$ S = \{ i \in I \mid
(\exists B_i \in \mathcal G)(\exists g_i\in\Hom _R(M_i, B_i))
(g_i \textrm{ does not factor through } f_i) \}
\eqno{(*)}
$$
must intersect every $\lambda$-closed cofinal subset of $I$ (so $S$ is a generalized stationary set, in an obvious sense). For each $i\in S$, choose some $B_i\in\mathcal G$ and $g_i:M_i\to B_i$ whose existence is claimed in $(*)$. For the indices $i\in I\setminus S$, let $B_i$ be an arbitrary module from $\mathcal G$ and $g_i:M_i \to B_i$ be the zero map. Put $B = \prod_{i \in I} B_i$.

Now, define a homomorphism $h_{ji}: M_i \to B_j$ for each pair $i,j \in I$ in the following way: $h_{ji} = g_j \circ f_{ji}$ if $i \leq j$ and $h_{ji} = 0$ otherwise. This family of maps gives rise to a canonical homomorphism $h: \bigoplus_{k \in I} M_k \to B$. More precisely, if we denote by $\pi_j: B \to B_j$ the projection to the $j$-th component and by $\nu_i: M_i \to \bigoplus _{k\in I}M_k$ the canonical inclusion of the $i$-th component, $h$ is (unique) such that $\pi_j \circ h \circ \nu_i = h_{ji}$. Note that for every $i,j \in I$ \st $i \leq j$, the set $\{k \in I \mid h_{ki} = h_{kj} \circ f_{ji} \}$ is in the associated filter $\mathfrak F_I$ since it contains $\uparrow\! j$. Hence, if we denote by $\varphi$ the canonical pure epimorphism $\bigoplus _{i\in I}M_i\to M = \varinjlim _{i\in I}M_i$ (that is \st $\varphi\circ\nu_i = f_i$ for all $i\in I$), there is a well-defined homomorphism $u$ from $M$ to the $\mathfrak F_I$-reduced product $B/\Sigma _{\mathfrak F_I}B$ making the following diagram commutative ($\rho$ denotes the canonical projection):
$$\begin{CD}
	B @>{\rho}>>	B/\Sigma _{\mathfrak F_I}B @>>> 0\ 	\\
	@A{h}AA			@A{u}AA	\\
	\bigoplus _{i\in I} M_i	@>{\varphi}>>	M @>>> 0.
\end{CD}$$

We have $\Sigma _{\mathfrak F_I}B\in\mathcal G$ since $\mathcal G$ is filter-closed. Hence, using the assumption that $\Ext^1_R(M,\Sigma_{\mathfrak F_I} B) = 0$, we can factorize $u$ through $\rho$ to get some $g \in \Hom_R(M,B)$ \st $u = \rho \circ g$. Since the $M_i$ are all $<\lambda$-generated and $\mathfrak F_I$ is $\lambda$-complete by Lemma~\ref{lem:lambda_filt}, we obtain (for every $i\in I$) that ``$h \circ \nu _i$ coincides with $g \circ \varphi \circ \nu _i = g \circ f_i$ on a set from the filter'', that is:
$$ \{k\in I \mid \pi_k \circ g \circ f_i = \pi_k \circ h \circ \nu _i\} \in \mathfrak F_I. \eqno{(**)}$$

Let us define $J$ as follows:
$$ J = \{i \in I \mid (\forall k \geq i)(\pi_k \circ g \circ f_i = g_k \circ f_{ki}) \}.$$
Then clearly, $g_i$ factors through $f_i$ for every $i \in J$ (just by applying the definition of $J$ for $k = i$). Hence certainly $J \cap S = \varnothing$.

To obtain a contradiction and finish the proof of the lemma, it is now enough to show that $J$ is $\lambda$-closed cofinal.
The fact that $J$ is $\lambda$-closed follows easily by $\lambda$-continuity of the direct system $(M_i, f_{ji}\mid i\leq j)$. So we are left to prove that $J$ is cofinal in $I$. But by $(**)$ and the definition of $\mathfrak F_I$, we can find for every $i \in I$ an element $s(i) \in I$ \st $s(i) \geq i$ and
$$ (\forall k \geq s(i)) (\pi_k \circ g \circ f_i = \pi_k \circ h \circ\nu _i). \eqno{(\Delta)}$$
Recall that $\pi_k \circ h \circ \nu _i = h_{ki} = g_k \circ f_{ki}$. Now, if we fix any $i' \in I$, we can define $j_0 = i'$, $j_{n+1} = s(j_n)$ for all $n \geq 0$, and $j = \sup_{n<\omega} j_n$. Then clearly $j \geq i'$, and it is easy to check that $j \in J$ using the $\aleph _1$-continuity of the direct system $(M_i, f_{ji}\mid i\leq j)$.
\end{proof}
An important consequence follows by applying Lemma~\ref{lem:factor} to the case when the class $\mathcal G$ cogenerates every module. This is for instance always the case when $\mathcal G$ is a right-hand class of a cotorsion pair, since then all injective modules are inside $\mathcal G$.

\begin{prop} \label{prop:ext_submod}
Let $\mathcal G$ be a cogenerating filter-closed class of modules. Then for any uncountable regular cardinal $\lambda$ and any module $M$ \st $\Ext^1_R(M,\mathcal G) = 0$, there is a family $\mathcal C_\lambda$ of $<\lambda$-presented submodules of $M$ \st
\begin{enumerate}
\item $\mathcal C_\lambda$ is closed under unions of well-ordered ascending chains of length $<\lambda$,
\item every subset $X \subseteq M$ \st $\vert X\vert < \lambda$ is contained in some $N \in \mathcal C_\lambda$, and
\item $\Ext^1_R(M/N,\mathcal G) = 0$ for every $N \in \mathcal C_\lambda$.
\end{enumerate}
\end{prop}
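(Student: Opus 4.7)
The plan is to use Lemma~\ref{lem:factor} applied to a canonical $\lambda$-continuous presentation of $M$, exploiting the cogeneration hypothesis on $\mathcal G$ to ensure $<\lambda$-presentedness of the resulting submodules.

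I would first invoke Lemma~\ref{lem:lambda_dirsys} to realise $M = \varinjlim_{i\in I}(M_i, f_{ji})$ as a $\lambda$-continuous direct limit of $<\lambda$-presented modules, and then apply Lemma~\ref{lem:factor} (whose hypotheses are met by filter-closedness of $\mathcal G$ and $\Ext^1_R(M,\mathcal G)=0$) to obtain a $\lambda$-closed cofinal $J \subseteq I$ such that every $M_j \to B$ with $j\in J$ and $B\in\mathcal G$ factors through $f_j$. The cogenerating hypothesis then delivers $<\lambda$-presentedness: each $M_j$ embeds into some $G_j \in \mathcal G$, and the factorisation of this embedding through $f_j$ produced by Lemma~\ref{lem:factor} forces $f_j$ to be injective. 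Thus $N_j := f_j(M_j) \cong M_j$ is a $<\lambda$-presented submodule of $M$, and I would set $\mathcal C_\lambda := \{N_j \mid j\in J\}$.

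Conditions (2) and (3) are then routine. Given $X \subseteq M$ with $|X|<\lambda$, $\lambda$-directedness of $I$ (Lemma~\ref{lem:lambda_filt}) together with $M = \bigcup_{i} f_i(M_i)$ yields some $i \in I$ with $X \subseteq f_i(M_i)$, and cofinality of $J$ in $I$ enlarges $i$ to some $j\in J$ with $X \subseteq N_j$, establishing (2). For (3), any $g : N_j \to B$ with $B \in \mathcal G$ precomposes with the isomorphism $M_j \cong N_j$ to produce a map $M_j \to B$, which factors as $h \circ f_j$ for some $h: M \to B$; then $h|_{N_j} = g$, so $\Hom_R(M,B) \twoheadrightarrow \Hom_R(N_j,B)$, and the long exact sequence of $0 \to N_j \to M \to M/N_j \to 0$ combined with $\Ext^1_R(M,B)=0$ gives $\Ext^1_R(M/N_j, B)=0$.

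Condition (1), closure under chains of length $<\lambda$, is the subtlest. Given a chain $(N_{j_\alpha})_{\alpha<\tau}$ in $\mathcal C_\lambda$ with $\tau<\lambda$, the strategy is to recursively lift the chain of submodules to a chain $(k_\alpha)_{\alpha<\tau}$ inside $J$ with $N_{k_\alpha} = N_{j_\alpha}$, handling limit stages via $\lambda$-closedness of $J$; then $\lambda$-continuity of the direct system and injectivity of the $f_j$ for $j\in J$ yield $N_{\sup_\alpha k_\alpha} = \bigcup_\alpha N_{j_\alpha}$, realising the union as an element of $\mathcal C_\lambda$. The main obstacle I anticipate is engineering this lift so that successor stages yield equality $N_{k_\alpha} = N_{j_\alpha}$ rather than mere inclusion, since dominating indices in $J$ typically produce strictly larger images. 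A clean resolution is likely to enlarge $\mathcal C_\lambda$ to consist of all $<\lambda$-presented submodules $N \subseteq M$ with $\Ext^1_R(M/N,\mathcal G)=0$ and verify (1) by transfinite induction on $\tau$, with the limit step handled either by a Mittag-Leffler/$\varprojlim^1$ analysis of the towers $(\Hom_R(M/N_\alpha,B))_\alpha$ or by a fresh application of Lemma~\ref{lem:factor} tailored to the chain at hand.
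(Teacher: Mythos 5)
Your first two paragraphs match the paper's argument exactly: realise $M$ as a $\lambda$-continuous direct limit via Lemma~\ref{lem:lambda_dirsys}, apply Lemma~\ref{lem:factor} to get the $\lambda$-closed cofinal $J$, use the cogenerating hypothesis to make the $f_j$ injective, and deduce (2) and (3) for the family $\{M_j \mid j \in J\}$. The gap is exactly where you suspect it is, in condition (1), and neither of your proposed resolutions closes it. Taking $\mathcal C_\lambda = \{N_j \mid j\in J\}$ fails because a chain of submodules $N_{j_0}\subseteq N_{j_1}\subseteq\dotsb$ need not lift to a chain of indices in $J$ (the map $j\mapsto N_j$ is monotone but chains in its image need not come from chains in $J$), as you note. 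Your fallback --- enlarging $\mathcal C_\lambda$ to \emph{all} $<\lambda$-presented $N\subseteq M$ with $\Ext^1_R(M/N,\mathcal G)=0$ --- trades one unproved closure property for another: given a chain $(N_\alpha)_{\alpha<\tau}$ in that class, extending a homomorphism $\bigcup_\alpha N_\alpha \to B$ to $M$ requires gluing extensions of the restrictions $N_\alpha \to B$ coherently, which is precisely an inverse-limit obstruction; the Mittag-Leffler/$\varprojlim^1$ machinery available in the paper only covers countable towers, so this does not dispose of uncountable $\tau$. A per-chain application of Lemma~\ref{lem:factor} would also not produce a single family satisfying (1)--(3) simultaneously.

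The paper's fix is a global, not chain-by-chain, second application of Lemma~\ref{lem:factor}. One first forms $\overline{\mathcal D}$, the closure of $\mathcal D=\{M_j\mid j\in J\}$ under unions of well-ordered chains of length $<\lambda$; this is a directed, $\lambda$-complete poset under inclusion (directedness via Lemma~\ref{lem:lambda_filt}), and its members with the inclusion maps form a $\lambda$-continuous direct system with limit $M$. Members of $\overline{\mathcal D}$ may well have lost the extension property, but applying Lemma~\ref{lem:factor} to this new system produces a $\lambda$-closed cofinal subset $\mathcal C_\lambda\subseteq\overline{\mathcal D}$ whose members do have it, which gives (3); being $\lambda$-closed in $\overline{\mathcal D}$ \emph{is} condition (1), and cofinality gives (2). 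So the missing idea is to iterate the factorisation lemma once more on the closed-up family, rather than trying to verify closure under unions directly for the family obtained from the first application.
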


\begin{proof}
By Lemma~\ref{lem:lambda_dirsys}, there is a $\lambda$-continuous direct system $(M_i, f_{ji}\mid i\leq j)$ of $<\lambda$-presented modules indexed by a set $I$ \st $M$ together with some maps $f_i: M_i \to M$ forms its direct limit. Now, the data $\mathcal G$, $\lambda$, $(M, f_i\mid i\in I)$, $(M_i, f_{ji}\mid i\leq j)$ and $I$ fits exactly to Lemma~\ref{lem:factor}. Hence, there is a $\lambda$-closed cofinal subset $J \subseteq I$ \st for every $j \in J$, every homomorphism from $M_j$ to a module in $\mathcal G$ factors through $f_j$. But the fact that $\mathcal G$ is a cogenerating class implies that $f_j$ is injective. Thus, we can view the modules $M_j$ for $j \in J$ as submodules of $M$, and the maps $f_j$ and $f_{ji}$ as inclusions. Let us define
$$ \mathcal D = \{M_j \mid j \in J \} $$
and let $\overline{\mathcal D}$ be the closure of $\mathcal D$ under unions of well-ordered chains of length $<\lambda$. 
Observe, that $(\mathcal D,\subseteq)$ is a directed poset since $J$ is a cofinal subset of the directed set $I$. Using Lemma~\ref{lem:lambda_filt}, we easily deduce that $\overline{\mathcal D}$ is directed, too. Now, we can view the modules in $\overline{\mathcal D}$ together with inclusions between them as a $\lambda$-continuous direct system indexed by $\overline{\mathcal D}$ itself. Hence, we can apply Lemma~\ref{lem:factor} for the second time to get a $\lambda$-closed cofinal subset $\mathcal C_\lambda$ of $\overline{\mathcal D}$ \st every homomorphism from a module $N \in \mathcal C_\lambda$ to a module in $\mathcal G$ extends to $M$.

The latter property together with the fact that $\Ext^1_R(M, \mathcal G) = 0$ immediately implies $(3)$. The property $(1)$ is just another way to say that $\mathcal C_\lambda$ is $\lambda$-closed in $\overline{\mathcal D}$. For $(2)$, first notice that $\bigcup \mathcal C_\lambda = M$ since $\mathcal C_\lambda$ is cofinal in $\overline{\mathcal D}$. Hence, if $X \subseteq M$ is a subset of cardinality $<\lambda$, there is a subset $\mathcal M \subseteq \mathcal C_\lambda$ of cardinality $<\lambda$ \st every $x \in X$ is contained in some $N' \in \mathcal M$. Finally, Lemma~\ref{lem:lambda_filt} provides us with an upper bound $N \in \mathcal C_\lambda$ for $\mathcal M$, and clearly $X \subseteq N$.
\end{proof}

In Lemma~\ref{lem:factor}, the assumption of $\lambda$ being uncountable is essential. We can, nevertheless, obtain a weaker but important result using the same technique for $\lambda = \omega$ and $(I,\leq) = (\omega,\leq)$. Lemma~\ref{lem:factor2} actually says that, for $B \in\mathcal G$, the inverse system of groups $(\Hom _R(M_i, B), \Hom _R(f_{ji}, B)\mid i\leq j < \omega)$ is Mittag-Leffler, and the stationary indices determined by $s$ are common over all $B\in\mathcal G$. In this terminology, a proof of the lemma is mostly contained in the proof of~\cite[Theorems 2.5 and 3.7]{BH}.

We give a different proof here and we do this for two main reasons: First, the statement about common stationary indices has an important interpretation in the first-order theory of modules and is missing in~\cite{BH}. Second, we show that the Mittag-Leffler property is a part of a common framework which works for both countable and uncountable systems.

\begin{lem} \label{lem:factor2}
Let $\mathcal G$ be a class of modules closed under countable direct sums. Let $(M, f_i\mid i<\omega)$ be a direct limit of a countable direct system $(M_i, f_{ji}\mid i\leq j<\omega)$ consisting of finitely generated modules.

Assume that $\Ext^1_R(M,\mathcal G) = 0$. Then there is a strictly increasing function $s:\omega \to \omega$ such that for each $B \in \mathcal G$, $i < \omega$ and $c: M_i \to B$ the following holds: If $c$ factors through $f_{s(i)i}$, then it factors through $f_{ni}$ for all $n \ge s(i)$.
\end{lem}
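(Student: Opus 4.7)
My plan is to first establish that for each $B \in \mathcal G$ the inverse system $H_n^B := \Hom_R(M_n, B)$ with connecting maps $\Hom_R(f_{n+1, n}, B)$ is Mittag-Leffler, and then to upgrade this to a bound on the stationary index that is \emph{uniform} in $B \in \mathcal G$. For the first step, I apply $\Hom_R(-, B^{(\omega)})$ to the canonical telescope short exact sequence $0 \to \bigoplus_{n<\omega} M_n \to \bigoplus_{n<\omega} M_n \to M \to 0$ (whose middle map is $1-\sigma$, with $\sigma$ shifting by the transition maps $f_{n+1, n}$). Since $\mathcal G$ is closed under countable direct sums, $B^{(\omega)} \in \mathcal G$, so $\Ext^1_R(M, B^{(\omega)}) = 0$ and the induced differential on $\prod_n \Hom_R(M_n, B^{(\omega)})$ is surjective; hence $\varprojlim^1 \Hom_R(M_n, B^{(\omega)}) = 0$. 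Because each $M_n$ is finitely generated, $\Hom_R(M_n, B^{(\omega)}) \cong (H_n^B)^{(\omega)}$, so $\varprojlim^1 (H_n^B)^{(\omega)} = 0$ and Proposition~\ref{prop:ML}(2) delivers the Mittag-Leffler property. In particular, for each $i < \omega$ and $B \in \mathcal G$ there is a smallest stationary index $t_i(B) \geq i$ such that $I_{n,i}(B) := \Img \Hom_R(f_{n,i}, B) \subseteq \Hom_R(M_i, B)$ is constant for all $n \geq t_i(B)$.

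The main technical step is uniformity across $\mathcal G$. Suppose for contradiction that for some fixed $i$ the values $t_i(B)$ are unbounded as $B$ ranges over $\mathcal G$; pick $B_N \in \mathcal G$ with $t_i(B_N) > N$ for each $N < \omega$, and set $B^* := \bigoplus_{N<\omega} B_N \in \mathcal G$. Since every $M_k$ is finitely generated, $\Hom_R(M_k, B^*) = \bigoplus_N \Hom_R(M_k, B_N)$ and the map $\Hom_R(f_{k,i}, B^*)$ splits coordinate-wise, so $I_{k,i}(B^*) = \bigoplus_N I_{k,i}(B_N)$. Applying the previous step to $B^*$ yields $T := t_i(B^*) < \omega$ with $I_{k,i}(B^*) = I_{T,i}(B^*)$ for all $k \geq T$; comparing coordinates forces $I_{k,i}(B_N) = I_{T,i}(B_N)$ for every $N$ and $k \geq T$, i.e.\ $t_i(B_N) \leq T$ for all $N$. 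Taking $N = T$ contradicts the choice $t_i(B_T) > T$. Consequently $s_0(i) := \sup_{B \in \mathcal G} t_i(B) < \omega$.

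To finish, I replace $s_0$ by a strictly increasing $s : \omega \to \omega$ with $s(i) \geq s_0(i)$, e.g.\ $s(0) := s_0(0)$ and $s(i) := \max(s_0(i), s(i-1)+1)$ for $i \geq 1$. Then for any $B \in \mathcal G$, $i < \omega$, and $c : M_i \to B$ factoring through $f_{s(i),i}$, we have $c \in I_{s(i),i}(B) = I_{n,i}(B)$ for every $n \geq s(i)$, so $c$ factors through $f_{n,i}$, which is precisely the conclusion. I expect the uniformity step to be the main obstacle; it is overcome by the observation that $\Hom_R(M_k, -)$ commutes with arbitrary direct sums whenever $M_k$ is finitely generated, which lets Mittag-Leffler for the single module $B^*$ propagate with the same stationary index to all of its summands $B_N$ simultaneously.
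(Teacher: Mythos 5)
Your proof is correct, but it takes a genuinely different route from the paper's. The paper argues by contradiction and reuses the reduced-product machinery of Lemma~\ref{lem:factor}: assuming $s(i)$ cannot be defined, it collects bad witnesses $g_j\colon M_j \to B_j$, forms $B=\prod_j B_j$, and uses $\Ext^1_R(M,\bigoplus_j B_j)=0$ (the $\mathfrak F_\omega$-product for the Fr\'echet filter being the direct sum) to lift the induced map $M \to B/\bigoplus_j B_j$ to $B$; this forces $g_k\circ f_{ki}$ to factor through $f_i$ for all large $k$, contradicting the choice of the $g_k$. You instead split the work into (a) the Mittag--Leffler property of $(\Hom_R(M_n,B))_n$ for each fixed $B$, deduced from $\varprojlim^1\Hom_R(M_n,B^{(\omega)})=0$ via Proposition~\ref{prop:ML}(2) -- this is exactly the paper's Corollary~\ref{cor:ext_to_ML}, and the nontrivial converse direction of Proposition~\ref{prop:ML}(2) is the only external input you need -- and (b) uniformity of the stationary index over $\mathcal G$, obtained by bundling putative counterexamples into a single countable direct sum $B^*$ and comparing coordinates. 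Step (b) is sound: since the $M_k$ are finitely generated, $\Hom_R(f_{k,i},B^*)$ decomposes as the direct sum of the maps $\Hom_R(f_{k,i},B_N)$, so $I_{k,i}(B^*)=\bigoplus_N I_{k,i}(B_N)$, and equality of two such direct sums of submodules inside the ambient direct sum decomposition does force coordinatewise equality. In fact this bundling trick is the same uniformization device the paper itself uses later, in the proof of Theorem~\ref{thm:char_A} $(1)\Rightarrow(2)$, for T-nilpotency. What the paper's route buys is the explicit point, stated by the authors as a motivation, that the countable case fits the same filter-theoretic framework as the uncountable Lemma~\ref{lem:factor}; what yours buys is independence from that machinery. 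One minor observation: the paper's proof additionally yields that $c$ factors through $f_i\colon M_i\to M$ itself, whereas your argument as written only gives factorization through each finite stage $f_{ni}$ -- which is all the lemma asks for (the stronger statement would follow from the standard fact that for a Mittag--Leffler system the stable images coincide with the image of the inverse limit, here $\varprojlim\Hom_R(M_n,B)\cong\Hom_R(M,B)$).
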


\begin{proof}
We will show that it is possible to construct the values $s(i)$ by induction on $i$. Suppose by way of contradiction that there is some $i < \omega$ for which we cannot define $s(i)$. This can only happen if for each $j \ge i$, there is a homomorphism $g_j:M_j \to B_j$ \st $B_j \in \mathcal G$, and $g_j\circ f_{ji}$ does not factor through $f_{ni}$ for some $n>j$. For $j<i$ let $g_j$ be zero maps and $B_j \in \mathcal G$ be arbitrary. Put $B = \prod _{j < \omega} B_j$.

Now, we follow the proof of Lemma~\ref{lem:factor} (with $\omega$ in place of $I$ and $\lambda$) starting with the second paragraph and ending just after the definition of $(**)$. Note that the corresponding notion of $\aleph _0$-completeness is void, $\mathfrak F_\omega$ is the Fr\' echet filter on $\omega$, and the $\mathfrak F_\omega$-product $\Sigma_{\mathfrak F_\omega} B$ is just the direct sum $\bigoplus _{j < \omega} B_j$.

By the same argument as for $(\Delta)$ in the proof of Lemma~\ref{lem:factor} and with the same notation as there, there is some $s'\ge i$ such that
$$ (\forall k \geq s') (\pi_k \circ g \circ f_i = \pi_k \circ h \circ\nu _i) $$
holds and $\pi_k \circ h \circ \nu _i = h_{ki} = g_k \circ f_{ki}$ for each $k \geq s'$. But this contradicts the fact implied by the choice of $g_k$ that $g_k\circ f_{ki}$ does not factor through $f_i$.
\end{proof}

Let us remark that we have actually proved a little more than we stated in Lemma~\ref{lem:factor2}---we have constructed $s: \omega \to \omega$ \st if $c: M_i \to B$ factors through $f_{s(i)i}$, then it factors through $f_i: M_i \to M$. The motivation for the seemingly more complicated statement of the lemma should become clear in the following paragraphs.

If the modules $M_i$ in the direct system from the lemma above are finitely presented instead of finitely generated, we have a statement about factorization through maps between finitely presented modules. Which in other words means that some coherent functors vanish and the Mittag-Leffler property is preserved within the smallest definable class containing $\mathcal G$. This is made precise by the following lemma.

\begin{lem} \label{lem:factor_def}
Let $\mathcal G$ be a class of modules closed under countable direct sums and $\mathcal D$ be the smallest definable class containing $\mathcal G$. Let $(M, f_i\mid i<\omega)$ be a direct limit of a direct system $(M_i, f_{ji}\mid i\leq j<\omega)$ consisting of finitely presented modules.

Assume that $\Ext^1_R(M,\mathcal G) = 0$. Then there is a strictly increasing function $s:\omega \to \omega$ such that for each $D \in \mathcal D$, $i < \omega$ and $c: M_i \to D$ the following holds: If $c$ factors through $f_{s(i)i}$, then it factors through $f_{ni}$ for all $n \ge s(i)$.
\end{lem}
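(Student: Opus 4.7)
My strategy is to reduce to Lemma~\ref{lem:factor2} and then promote the conclusion from $\mathcal G$ to $\mathcal D$ by translating the factorization property into the vanishing of a coherent functor. Since finitely presented modules are in particular finitely generated, Lemma~\ref{lem:factor2} applies and yields a strictly increasing $s:\omega\to\omega$ such that, for every $B\in\mathcal G$, every $i<\omega$ and every $n\ge s(i)$, any $c:M_i\to B$ that factors through $f_{s(i),i}$ already factors through $f_{n,i}$. My claim is that this very $s$ works for every $D\in\mathcal D$.

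Fix $i<\omega$ and $n\ge s(i)$. Because $f_{n,i}=f_{n,s(i)}\circ f_{s(i),i}$, the inclusion $\Img\Hom_R(f_{n,i},-)\subseteq\Img\Hom_R(f_{s(i),i},-)$ gives a canonical pointwise surjective natural transformation
\[
\tau_{i,n}:\Coker\Hom_R(f_{n,i},-)\longrightarrow\Coker\Hom_R(f_{s(i),i},-).
\]
Both source and target are coherent by Lemma~\ref{lem:coherent}, and the kernel $K_{i,n}$ of $\tau_{i,n}$ is also coherent, since coherent functors form an abelian subcategory of $(\ModR,\Ab)$, a well-known feature of Auslander's theory (compare~\cite{C}). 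By construction, $K_{i,n}(D)=0$ if and only if $\Img\Hom_R(f_{s(i),i},D)\subseteq\Img\Hom_R(f_{n,i},D)$, which is precisely the statement that every $c:M_i\to D$ factoring through $f_{s(i),i}$ factors through $f_{n,i}$.

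Lemma~\ref{lem:factor2} then gives $K_{i,n}(B)=0$ for every $B\in\mathcal G$, and by the equivalent characterization~(2) of definability, the zero class $\{D\in\ModR\mid K_{i,n}(D)=0\}$ is itself definable. As it contains $\mathcal G$, the minimality of $\mathcal D$ forces it to contain $\mathcal D$, so $K_{i,n}(D)=0$ for every $D\in\mathcal D$. Running this argument for each admissible pair $(i,n)$ gives the lemma. I expect the only delicate input to be the stability of coherent functors under kernels; once this is in place, the passage from $\mathcal G$ to $\mathcal D$ is automatic from the very definition of a definable class.
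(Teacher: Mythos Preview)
Your proof is correct and essentially identical to the paper's: your kernel $K_{i,n}$ computes pointwise as $\Img\Hom_R(f_{s(i),i},-)/\Img\Hom_R(f_{n,i},-)$, which is exactly the functor $F$ the paper writes down directly and declares coherent. The only difference is packaging---you justify coherence of $F$ via closure of coherent functors under kernels, while the paper simply asserts it is ``straightforward to check''.
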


\begin{proof}
By restating the conclusion of Lemma~\ref{lem:factor2}, we get that $\Img \Hom_R(f_{s(i)i},D) = \Img \Hom_R(f_{ni},D)$ for each $D \in \mathcal G$ and $i \leq s(i) \leq n < \omega$. It is also straightforward to check that $F = \Img \Hom_R(f_{s(i)i},-) / \Img \Hom_R(f_{ni},-)$ is a coherent functor. Hence we have $\Img \Hom_R(f_{s(i)i},D) = \Img \Hom_R(f_{ni},D)$ also for each $D \in \mathcal D$ and the claim follows.
\end{proof}

Note also that instead of vanishing of the coherent functors in the proof above, we can equivalently consider that certain implications between pp-formulas are satisfied~\cite[\S 2.1]{C}, thus reformulating the proof in a more model theoretic way.

Now, we can prove a crucial statement similar to~\cite[Theorem 2.5]{BH}:

\begin{prop} \label{prop:pure_products} Let $\mathcal G$ be a class of modules closed under countable direct sums, and let $M$ be a countably presented module \st $\Ext^1_R(M,\mathcal G) = 0$. Then $\Ext^1_R(M,D) = 0$ for every $D$ isomorphic to a pure submodule of a product of modules from $\mathcal G$.
\end{prop}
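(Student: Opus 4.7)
My plan is to combine Lemma~\ref{lem:factor_def} with the pure embedding of $D$ into a product $P = \prod_\alpha G_\alpha$ of modules from $\mathcal G$, and compute $\Ext^1_R(M,D)$ via derived functors of the inverse limit. Since $M$ is countably presented, Lemma~\ref{lem:lambda_dirsys} (applied with $\lambda = \omega$) gives us a countable directed system of finitely presented modules with direct limit $M$; passing to a cofinal chain, I write $M = \varinjlim_{n<\omega} M_n$ with $M_n$ finitely presented and transition maps $f_{mn}: M_n \to M_m$ for $n \leq m$. Let $\mathcal D$ denote the smallest definable class containing $\mathcal G$; since definable classes are closed under products and pure submodules, we have $D \in \mathcal D$.

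Applying Lemma~\ref{lem:factor_def} to $\mathcal G$ and the above direct system (noting $\Ext^1_R(M,\mathcal G)=0$) yields a strictly increasing $s:\omega \to \omega$ such that for every $D' \in \mathcal D$ and $i<\omega$, the image $\Img\Hom_R(f_{ni}, D')$ is constant for $n \geq s(i)$. Specialising to $D' = D \in \mathcal D$, the countable inverse system $(\Hom_R(M_n, D), \Hom_R(f_{mn}, D))$ is Mittag-Leffler, so by Proposition~\ref{prop:ML}(1) we obtain ${\varprojlim}^1 \Hom_R(M_n, D) = 0$.

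Now consider the pure exact sequence $0 \to D \to P \to P/D \to 0$. Since each $M_n$ is finitely presented, purity ensures that $\Hom_R(M_n, -)$ preserves this sequence; varying $n$ gives a short exact sequence of countable inverse systems. The associated long exact sequence of $\varprojlim$ and $\varprojlim^1$ from the excerpt, together with the identification $\varprojlim \Hom_R(M_n, -) = \Hom_R(M, -)$, reduces (thanks to the vanishing above) to the short exact sequence
$$0 \to \Hom_R(M,D) \to \Hom_R(M,P) \to \Hom_R(M, P/D) \to 0.$$
Thus every homomorphism $M \to P/D$ lifts to a homomorphism $M \to P$. Since $\Ext^1_R(M,-)$ commutes with products in the second variable, the hypothesis $\Ext^1_R(M,\mathcal G) = 0$ implies $\Ext^1_R(M,P) = \prod_\alpha \Ext^1_R(M,G_\alpha) = 0$. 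The long exact sequence obtained by applying $\Hom_R(M,-)$ to $0 \to D \to P \to P/D \to 0$ then reads
$$\Hom_R(M,P) \twoheadrightarrow \Hom_R(M, P/D) \to \Ext^1_R(M,D) \to \Ext^1_R(M,P) = 0,$$
forcing $\Ext^1_R(M,D) = 0$.

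The main obstacle is the step that transfers the Mittag-Leffler property from the class $\mathcal G$ to its definable closure, because a priori the hypothesis only gives us vanishing of $\Ext^1$ against $\mathcal G$ itself; this is exactly what Lemma~\ref{lem:factor_def} (and the coherent-functor argument behind it) was designed to provide. Once that transfer is in place, the rest is routine homological bookkeeping with the $\varprojlim^1$ long exact sequence and the fact that $\Ext^1_R(M,-)$ commutes with products.
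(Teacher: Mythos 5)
Your proposal is correct and follows essentially the same route as the paper: write $M$ as a countable chain colimit of finitely presented modules, use Lemma~\ref{lem:factor_def} to transfer the Mittag--Leffler property to $D$ in the definable closure of $\mathcal G$, kill $\varprojlim^1\Hom_R(M_n,D)$ via Proposition~\ref{prop:ML}, and conclude from the long exact sequence using $\Ext^1_R(M,\prod_\alpha G_\alpha)=0$. The only cosmetic difference is that you route the surjectivity of $\Hom_R(M,P)\to\Hom_R(M,P/D)$ through the six-term $\varprojlim$--$\varprojlim^1$ sequence of the pure-exact sequence, whereas the paper states the same computation slightly more tersely.
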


\begin{proof} Let $D$ be a pure submodule of $\prod_k B_k$ for some $B_k \in \mathcal G$. Since $M$ is countably presented, it can be considered as a direct limit of a countable chain of finitely presented modules $M_i, i<\omega$, as in the assumptions of Lemma~\ref{lem:factor_def}. Hence $(\Hom_R(M_i, D), \Hom_R(f_{ji}, D)\mid i\leq j<\omega)$ is Mittag-Leffler since any definable class is closed under taking products and pure submodules.

Then we continue as in the proof of~\cite[Theorem 2.5]{BH}. Since $\Ext^1_R\bigl(M,\prod_k B_k\bigr) = 0$ by assumption, we have the exact sequence
$$
\Hom_R\bigl(M,\prod_k B_k\bigr) \overset{h}\to \Hom_R\bigl(M,\bigl(\prod_k B_k\bigr)/D\bigr) \to \Ext^1_R(M,D) \to 0,
$$
and so it suffices to show that $h$ is an epimorphism. This easily follows from Proposition~\ref{prop:ML} applied on the inverse system $(\Hom_R(M_i, D), \Hom_R(f_{ji}, D)\mid i\leq j<\omega)$. Indeed, we see that $\varprojlim^1_i \Hom_R(M_i,D) = 0$ and obtain the exact sequence
$$
\varprojlim_i\Hom_R\bigl(M_i,\prod_k B_k\bigr) \to \varprojlim_i\Hom_R\bigl(M_i,\bigl(\prod_k B_k\bigr)/D\bigr) \to 0.
$$
It remains to use the basic fact that contravariant Hom-functors take colimits to limits.
\end{proof}


\section{Countable type}
\label{sec:cnt}

In this section, we prove the main result of our paper---the Countable Telescope Conjecture for Module Categories. But before doing this, we introduce a fairly simplified version of Shelah's Singular Compactness Theorem. It is based on~\cite[Theorem~IV.3.7]{EM}. In the terminology there, systems witnessing strong $\lambda$-``freeness'' correspond to the $\lambda$-dense systems defined below.

A reader acquainted with the full-fledged compactness theorem for filtrations of modules proved in~\cite[XII.1.14 and IV.3.7]{EM} or~\cite{E} may well skip Lemma~\ref{lem:SSC}.  We state and prove the lemma for the sake of completeness, and also because we are using only a fragment of the full compactness theorem, and it makes the proof of the Countable Telescope Conjecture more transparent.

\begin{defn} \label{def:filtr}
Let $M$ be a module and $\lambda$ be a regular uncountable cardinal. Then a set $\mathcal C_\lambda$ of $<\lambda$-generated submodules of $M$ is called a \emph{$\lambda$-dense system in $M$} if
\begin{enumerate}
\item $0 \in \mathcal C_\lambda$,
\item $\mathcal C_\lambda$ is closed under unions of well-ordered ascending chains of length $<\lambda$, and
\item every subset $X \subseteq M$ \st $\vert X\vert < \lambda$ is contained in some $N \in \mathcal C_\lambda$.
\end{enumerate}
\end{defn}

\begin{lem}[Simplified Shelah's Singular Compactness Theorem] \label{lem:SSC}
Let $\kappa$ be a singular cardinal, $M$ a $\kappa$-generated module, and let $\mu$ be a cardinal \st $\cf \kappa \leq \mu < \kappa$. Suppose we are given a $\lambda$-dense system, $\mathcal C_\lambda$, in $M$ for each regular $\lambda$ \st $\mu < \lambda < \kappa$. Then there is a filtration $(M_\alpha \mid \alpha\leq\cf \kappa)$ of $M$ and a continuous strictly increasing chain of cardinals $(\kappa_\alpha \mid \alpha<\cf \kappa)$ cofinal in $\kappa$ \st $M_\alpha \in \mathcal C_{\kappa_\alpha^+}$ for each $\alpha<\cf \kappa$.
\end{lem}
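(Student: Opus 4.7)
Set $\tau = \cf\kappa$. I would build the filtration and the cardinal sequence by transfinite recursion on $\alpha \leq \tau$. First, fix a continuous strictly increasing sequence of cardinals $(\kappa_\alpha : \alpha < \tau)$ cofinal in $\kappa$ with $\kappa_0 \geq \mu$; such a sequence exists since $\tau \leq \mu < \kappa$. Fix a generating set $\{m_\xi : \xi < \kappa\}$ of $M$, and put $X_\alpha = \{m_\xi : \xi < \kappa_\alpha\}$, so the $X_\alpha$ are an increasing exhaustion of generators with $|X_\alpha| \leq \kappa_\alpha$.

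The naive recursion --- at successors enlarge $M_\alpha \cup X_{\alpha+1}$ to a member of $\mathcal{C}_{\kappa_{\alpha+1}^+}$ by density~(3), at limits take unions --- breaks at a limit $\alpha < \tau$: the chain-closure axiom~(2) of the $\kappa_\alpha^+$-dense system places $\bigcup_{\beta<\alpha} M_\beta$ into $\mathcal{C}_{\kappa_\alpha^+}$ only when each chain member $M_\beta$ already lies there, whereas the induction supplies only $M_\beta \in \mathcal{C}_{\kappa_\beta^+}$ and different dense systems have no assumed compatibility. I would therefore strengthen the inductive hypothesis to
\begin{equation*}
(\ast) \qquad M_\alpha \in \mathcal{C}_{\kappa_\gamma^+} \;\text{ for every } \gamma \in [\alpha,\tau).
\end{equation*}
Under $(\ast)$ the limit step is immediate: at a limit $\alpha<\tau$, every $M_\beta$ with $\beta<\alpha$ lies in $\mathcal{C}_{\kappa_\alpha^+}$ by applying $(\ast)$ at stage $\beta$ with $\gamma=\alpha$; the chain length $\alpha<\tau\le\mu\le\kappa_\alpha<\kappa_\alpha^+$ is below the closure threshold of~(2), whence $M_\alpha:=\bigcup_{\beta<\alpha}M_\beta\in\mathcal{C}_{\kappa_\alpha^+}$, and the identical argument at each $\gamma\in(\alpha,\tau)$ propagates $(\ast)$ to stage $\alpha$. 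Continuity combined with $X_\alpha\subseteq M_\alpha$ forces $M_\tau=M$ at the top.

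\textbf{Main obstacle.} The real work is the successor step: given $M_\alpha$ satisfying $(\ast)$, produce $M_{\alpha+1}$ of cardinality $\le \kappa_{\alpha+1}$ that contains $M_\alpha \cup X_{\alpha+1}$ and lies \emph{simultaneously} in every $\mathcal{C}_{\kappa_\gamma^+}$ for $\gamma\in[\alpha+1,\tau)$. The dense systems at different levels need not interact, and the obvious moves fail: applying density within $\mathcal{C}_{\kappa_{\alpha+1}^+}$ yields membership at that single level only, while applying density within $\mathcal{C}_{\kappa_\gamma^+}$ for $\gamma>\alpha+1$ can inflate the cardinality past $\kappa_{\alpha+1}$. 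The resolution --- the combinatorial heart of Shelah's singular compactness theorem \cite[Theorem~IV.3.7]{EM} --- is a diagonal back-and-forth construction of length $\tau$, interleaving density and chain-closure at the various levels, in which the arithmetic $\tau\le\mu$ is used to keep the auxiliary chain shorter than every closure cardinal $\kappa_\gamma^+$ simultaneously. Once this successor step is justified, the recursion closes and $(M_\alpha : \alpha\le\tau)$ is the required filtration.
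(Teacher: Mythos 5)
Your diagnosis of the limit-stage problem is correct, but the fix you propose does not work, and the successor step you defer to ``the combinatorial heart of Shelah's theorem'' is not merely unproven --- under your strengthened hypothesis $(\ast)$ it is in general impossible. The systems $\mathcal C_{\kappa_\gamma^+}$ for different $\gamma$ are completely unrelated, and nothing in Definition~\ref{def:filtr} forces $\mathcal C_{\kappa_\gamma^+}$ to contain \emph{any} nonzero module of small cardinality: axiom (3) only guarantees that a set of size $<\kappa_\gamma^+$ sits inside \emph{some} member, which may always be $\kappa_\gamma$-generated, and axioms (1)--(2) add nothing smaller. Since the conclusion of the lemma forces $M_{\alpha+1}$ to be $\leq\kappa_{\alpha+1}$-generated (it must lie in $\mathcal C_{\kappa_{\alpha+1}^+}$), requiring $M_{\alpha+1}\in\mathcal C_{\kappa_\gamma^+}$ for all $\gamma>\alpha+1$ asks for a $\leq\kappa_{\alpha+1}$-generated member of systems that may have none; no diagonal interleaving can manufacture one. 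So $(\ast)$ is the wrong invariant.

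The actual proof keeps each $M_\alpha$ only in its own system $\mathcal C_{\kappa_\alpha^+}$ and recovers continuity at limits by a different device: an $\omega$-fold iteration of non-continuous chains. One first builds a chain $(N^0_\alpha)_{\alpha<\cf\kappa}$ with $N^0_\alpha\in\mathcal C_{\kappa_\alpha^+}$ containing $X_\alpha\cup\bigcup_{\beta<\alpha}N^0_\beta$, fixes an enumeration $Y^0_\alpha=\{y^0_{\alpha,\gamma}\mid\gamma<\kappa_\alpha\}$ of generators of each $N^0_\alpha$, and then for each $n>0$ builds a new chain in which $N^n_\alpha\in\mathcal C_{\kappa_\alpha^+}$ is required to contain $\{y^{n-1}_{\zeta,\gamma}\mid\alpha\leq\zeta<\cf\kappa\;\&\;\gamma<\kappa_\alpha\}$, i.e.\ the first $\kappa_\alpha$ generators of \emph{every later} module of the previous round. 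Setting $M_\alpha=\bigcup_{n<\omega}N^n_\alpha$, each $M_\alpha$ lies in $\mathcal C_{\kappa_\alpha^+}$ as the union of an $\omega$-chain, and at a limit $\alpha$ one gets $N^{n-1}_\alpha\subseteq\bigcup_{\beta<\alpha}N^n_\beta$ because $\kappa_\alpha=\sup_{\beta<\alpha}\kappa_\beta$, whence $M_\alpha=\bigcup_{\beta<\alpha}M_\beta$. That absorption of generators of \emph{future} modules into \emph{earlier} ones is the idea missing from your proposal; without it, or some substitute, the recursion does not close.
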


\begin{proof}
We will start with choosing the chain $(\kappa_\alpha \mid \alpha<\cf \kappa)$. In fact, we can choose any such chain provided that $\mu \leq \kappa_0$, just to make sure that $\mathcal C_{\kappa_\alpha^+}$ is always available. Let us fix one such chain $(\kappa_\alpha \mid \alpha<\cf \kappa)$.

Next, let $(X_\alpha \mid \alpha<\cf \kappa)$ be an ascending chain of subsets of $M$ \st $\bigcup_{\alpha<\cf \kappa} X_\alpha$ generates $M$ and $\vert X_\alpha \vert = \kappa_\alpha$ for each $\alpha<\cf \kappa$. Then, we can by induction construct a (not necessarily continuous) chain $(N^0_\alpha \mid \alpha<\cf \kappa)$ of submodules of $M$ \st $N^0_\alpha \in \mathcal C_{\kappa_\alpha^+}$ and $X_\alpha \cup \bigcup_{\beta<\alpha} N^0_\beta \subseteq N^0_\alpha$ for every $\alpha<\cf \kappa$. Since $N_\alpha$ is $\kappa_\alpha$-generated, we can fix for each $\alpha$ a generating set $Y^0_\alpha$ of $N^0_\alpha$ together with some enumeration $Y^0_\alpha = \{y^0_{\alpha,\gamma} \mid \gamma<\kappa_\alpha\}$. Next, we proceed by induction on $n<\omega$ and construct for each $n>0$ chain of modules $(N^n_\alpha \mid \alpha < \cf \kappa)$ and sets $Y^n_\alpha = \{y^n_{\alpha,\gamma} \mid \gamma<\kappa_\alpha\}$ \st
\begin{enumerate}
\item $(N^n_\alpha \mid \alpha < \cf \kappa)$ is a (not necessarily continuous) chain of submodules of $M$,
\item $N^n_\alpha \in \mathcal C_{\kappa_\alpha^+}$ and $N^n_\alpha \supseteq \{y^{n-1}_{\zeta,\gamma} \mid \alpha\leq\zeta<\cf \kappa \;\&\; \gamma<\kappa_\alpha\} \cup \bigcup_{\beta<\alpha} N^n_\beta$, and
\item $Y^n_\alpha = \{y^n_{\alpha,\gamma} \mid \gamma<\kappa_\alpha\}$ is a fixed enumeration of some set of generators of $N^n_\alpha$, for each $\alpha<\cf \kappa$.
\end{enumerate}
For each $n<\omega$, we clearly can construct such a chain and sets by induction on $\alpha$. Note in particular that we have always $N^{n-1}_\alpha \subseteq N^n_\alpha$ since $Y^{n-1}_\alpha = \{y^{n-1}_{\alpha,\gamma} \mid \gamma<\kappa_\alpha\} \subseteq N^n_\alpha$ by $(2)$. Hence, if we define $M_\alpha = \bigcup_{n<\omega} N^n_\alpha$, we clearly have $M_\alpha \in \mathcal C_{\kappa_\alpha^+}$ for each $\alpha<\cf \kappa$. Also, $\bigcup_{\alpha<\cf \kappa} M_\alpha = M$ since $X_\alpha \subseteq N^0_\alpha \subseteq M_\alpha$ for each $\alpha$. We claim that the chain $(M_\alpha \mid \alpha<\cf \kappa)$ is continuous. To see this, fix for this moment a limit ordinal $\alpha<\cf \kappa$. Then clearly $M_\alpha \supseteq \bigcup_{\beta<\alpha} M_\beta$. On the other hand, for a given $n>0$ and $\beta<\alpha$, we have $\{y^{n-1}_{\alpha,\gamma}\ \mid \gamma<\kappa_\beta\} \subseteq N^n_\beta$ by $(2)$. Therefore, $Y^{n-1}_\alpha \subseteq \bigcup_{\beta<\alpha} N^n_\beta$ and also $N^{n-1}_\alpha \subseteq \bigcup_{\beta<\alpha} N^n_\beta$ by $(3)$. Hence $M_\alpha \subseteq \bigcup_{\beta<\alpha} M_\beta$ and the claim is proved. Now, if we change $M_0$ for the zero module and put $M_{\cf \kappa} = M$, $(M_\alpha \mid \alpha\leq\cf \kappa)$ becomes a filtration with the desired properties.
\end{proof}

While Lemma~\ref{lem:SSC} or Shelah's Singular Compactness Theorem give us some information about the structure of a module with enough dense systems for a singular number of generators, we can prove a rather straightforward lemma which takes care of regular cardinals.

\begin{lem} \label{lem:reg_card}
Let $\kappa$ be a regular uncountable cardinal, $M$ be a $\kappa$-generated module and $\mathcal C_\kappa$ be a $\kappa$-dense system in $M$. Then there is a filtration $(M_\alpha \mid \alpha\leq\kappa)$ of $M$ \st $M_\alpha \in \mathcal C_\kappa$ for each $\alpha<\kappa$.
\end{lem}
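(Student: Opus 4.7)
The plan is to build the filtration by straightforward transfinite induction on $\alpha\leq\kappa$, using property~(3) of the dense system at successor stages and property~(2) at limit stages; the statement of the lemma really is a one-screen routine construction once the notion of a $\kappa$-dense system is available, so I do not anticipate any serious obstacle.

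First I would fix a generating set $\{x_\alpha\mid\alpha<\kappa\}$ of $M$ of cardinality $\kappa$ (possible because $M$ is $\kappa$-generated). I then construct the chain $(M_\alpha\mid\alpha<\kappa)$ by transfinite recursion so that at each stage I also record a chosen generating set $G_\alpha\subseteq M_\alpha$ of cardinality $<\kappa$ (which exists because every member of $\mathcal C_\kappa$ is $<\kappa$-generated by hypothesis). Start with $M_0=0\in\mathcal C_\kappa$ and $G_0=\varnothing$. At a successor stage, given $M_\alpha\in\mathcal C_\kappa$ with generating set $G_\alpha$ of cardinality $<\kappa$, the set $G_\alpha\cup\{x_\alpha\}$ still has cardinality $<\kappa$ (since $\kappa$ is uncountable, in particular $\ge\aleph_1$), so by (3) of Definition~\ref{def:filtr} there is $M_{\alpha+1}\in\mathcal C_\kappa$ with $G_\alpha\cup\{x_\alpha\}\subseteq M_{\alpha+1}$; in particular $M_\alpha\subseteq M_{\alpha+1}$ and $x_\alpha\in M_{\alpha+1}$. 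Pick a generating set $G_{\alpha+1}$ of $M_{\alpha+1}$ of cardinality $<\kappa$ and continue.

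At a limit ordinal $\beta<\kappa$, set $M_\beta=\bigcup_{\alpha<\beta}M_\alpha$. Since $(M_\alpha\mid\alpha<\beta)$ is a well-ordered ascending chain of length $\beta<\kappa$ in $\mathcal C_\kappa$, property~(2) of Definition~\ref{def:filtr} yields $M_\beta\in\mathcal C_\kappa$. As a generating set $G_\beta$ one can take $\bigcup_{\alpha<\beta}G_\alpha$, whose cardinality is at most $|\beta|\cdot\sup_{\alpha<\beta}|G_\alpha|<\kappa$ by regularity of $\kappa$, so the induction continues.

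Finally set $M_\kappa=M$. The chain is continuous at all limits $\beta<\kappa$ by construction, and continuity at $\kappa$ amounts to $M=\bigcup_{\alpha<\kappa}M_\alpha$, which holds because $x_\alpha\in M_{\alpha+1}$ for every $\alpha<\kappa$ and $\{x_\alpha\mid\alpha<\kappa\}$ generates $M$. Thus $(M_\alpha\mid\alpha\leq\kappa)$ is the required filtration, with $M_\alpha\in\mathcal C_\kappa$ for every $\alpha<\kappa$. The only point that requires any thought is making sure that the ``running'' generating sets $G_\alpha$ stay of size $<\kappa$ through limit stages, which is exactly where uncountable regularity of $\kappa$ is used.
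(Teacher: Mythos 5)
Your proof is correct and follows essentially the same transfinite-recursion argument as the paper: $M_0=0$, unions at limits (using property (2)), and property (3) applied to the current stage together with the next generator at successors. Your extra bookkeeping with the generating sets $G_\alpha$ is a sensible refinement (the paper applies (3) directly to $M_\beta\cup\{m_\beta\}$, which tacitly requires replacing $M_\beta$ by a generating set of size $<\kappa$), but it does not change the approach.
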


\begin{proof}
Let us fix an enumeration $\{m_\gamma \mid \gamma<\kappa\}$ of generators of $M$. We will construct the filtration by induction. Put $M_0 = 0$ and $M_\alpha = \bigcup_{\beta<\alpha} M_\beta$ for all limit ordinals $\alpha\leq\kappa$. For $\alpha = \beta+1$, we can find $M_\alpha\in\mathcal C_\kappa$ such that $M_\beta\cup\{m_\beta\}\subseteq M_\alpha$, using $(3)$ from Definition~\ref{def:filtr}.
\end{proof}

Before stating and proving the main result, we need a technical lemma about filtrations which has been studied in~\cite{FL,SaT,ST}, and whose origins can be traced back to an ingenious idea of P.\ Hill~\cite{Hi}.

\begin{lem} \cite[Theorem 6]{ST}. \label{lem:hill}
Let $\mathcal S$ be a set of countably presented modules and $M$ be a module possessing an $\mathcal S$-filtration $(M_\alpha \mid \alpha\leq\sigma)$. Then there is a family $\mathcal F$ of submodules of $M$ such that:
\begin{enumerate}
\item $M_\alpha \in \mathcal F$ for all $\alpha\leq\sigma$.
\item $\mathcal F$ is closed under arbitrary sums and intersections.
\item For each $N,P \in \mathcal F$ \st $N \subseteq P$, the module $P/N$ is $\mathcal S$-filtered.
\item For each $N \in \mathcal F$ and a countable subset $X \subseteq M$, there is $P \in \mathcal F$ \st $N \cup X \subseteq P$ and $P/N$ is countably presented.
\end{enumerate}
\end{lem}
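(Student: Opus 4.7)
The statement is the classical Hill lemma, and I would prove it by refining the given filtration into a much richer family of submodules indexed by certain subsets of $\sigma$.

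For each $\alpha<\sigma$, fix an isomorphism $M_{\alpha+1}/M_\alpha\cong S_\alpha$ with $S_\alpha\in\mathcal S$ and lift a countable set of generators to $C_\alpha=\{c_{\alpha,n}\mid n<\omega\}\subseteq M_{\alpha+1}$. Because $S_\alpha$ is countably presented, the relations satisfied by the cosets $c_{\alpha,n}+M_\alpha$ are generated by countably many equations of the form $\sum_n r_{k,n}c_{\alpha,n}=m_k$ with $m_k\in M_\alpha$. Each such $m_k$ is a finite $R$-linear combination of elements of $\bigcup_{\beta<\alpha}C_\beta$; collecting all indices $\beta$ occurring in those chosen expressions yields a countable set $F_\alpha\subseteq\alpha$. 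Call $S\subseteq\sigma$ \emph{closed} if $F_\alpha\subseteq S$ for every $\alpha\in S$, let $\mathcal T$ be the collection of closed sets, and for $S\in\mathcal T$ put $M(S)=\sum_{\alpha\in S}\langle C_\alpha\rangle_R$. Finally define $\mathcal F=\{M(S)\mid S\in\mathcal T\}$.

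The properties are verified as follows. Every ordinal segment $\alpha=\{\beta<\alpha\}$ is closed by construction, and induction on $\alpha$ gives $M(\alpha)=M_\alpha$, yielding (1). The family $\mathcal T$ is obviously closed under arbitrary unions, and one checks directly that $M(\bigcup_i S_i)=\sum_i M(S_i)$. For (3) and the intersection half of (2), the key step is to show that when $S_1\subseteq S_2$ are both closed, enumerating $S_2\setminus S_1$ in its natural ordinal order produces a continuous chain $(M(S_1\cup T_\beta))_\beta$ inside $M(S_2)$ whose consecutive factors are isomorphic to $S_\alpha\in\mathcal S$; this is exactly where the defining property of $F_\alpha$ is used, since it guarantees that the relations needed to identify each factor with $S_\alpha$ already lie in the previous term. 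From this one obtains the $\mathcal S$-filtration of $P/N$ in (3), and also deduces $M(S_1)\cap M(S_2)=M(S_1\cap S_2)$ by showing that every element of $M(S_1\cup S_2)$ has a canonical normal form with respect to the ordered generators. For (4), given $N=M(S)\in\mathcal F$ and a countable $X\subseteq M$, choose a countable $T_0\subseteq\sigma$ with $X\subseteq M(S\cup T_0)$, set $T_{n+1}=T_n\cup\bigcup_{\alpha\in T_n}F_\alpha$, and let $T=\bigcup_n T_n$; then $T$ is countable, $S\cup T$ is closed, and $P=M(S\cup T)$ satisfies $N\cup X\subseteq P$ while $P/N$ is $\mathcal S$-filtered by the countably many factors indexed by $T\setminus S$, hence countably presented.

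I expect the technical obstacle to be the intersection formula $M(S_1)\cap M(S_2)=M(S_1\cap S_2)$: the $\supseteq$ direction is trivial, but the reverse requires the canonical-form argument built on the $\mathcal S$-filtration of successive closed differences, which in turn depends essentially on the correct choice of the supports $F_\alpha$. Once that is in place, properties (1)--(4) follow routinely, and one may invoke the published version \cite[Theorem~6]{ST} for the remaining bookkeeping.
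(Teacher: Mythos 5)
The paper offers no proof of this lemma—it only cites \cite[Theorem~6]{ST}—and your argument is a correct reconstruction of the standard Hill-lemma proof given there (specialized to $\kappa=\aleph_1$): the same countable supports $F_\alpha$, the same notion of closed subsets of $\sigma$, and the same family $\mathcal F$ consisting of the submodules $M(S)$ for $S$ closed. Your closedness condition $F_\alpha\subseteq S$ is a harmless variant of the usual requirement $\langle C_\alpha\rangle\cap M_\alpha\subseteq\sum_{\beta\in S\cap\alpha}\langle C_\beta\rangle$ (it implies it, by expressing an arbitrary relation as a finite combination of the chosen generating relations), so the remaining verifications go through exactly as in the cited source.
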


Now, we are in a position to prove the Countable Telescope Conjecture.

\begin{thm}[Countable Telescope Conjecture] \label{thm:main}
Let $R$ be a ring and $\mathfrak C = (\mathcal A,\mathcal B)$ be a hereditary cotorsion pair of $R$-modules \st $\mathcal B$ is closed under unions of well-ordered chains. Then
\begin{enumerate}
\item $\mathfrak C$ is generated by a set of strongly countably presented modules,
\item $\mathfrak C$ is complete, and
\item $\mathcal B$ is a definable class.
\end{enumerate}
\end{thm}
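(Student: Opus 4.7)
The plan is to prove (1) first; (2) will then follow at once from the Eklof--Trlifaj theorem, and (3) will come from (1) together with Proposition~\ref{prop:pure_products} and a short commutation-of-limits argument. Let $\mathcal S$ denote the class of countably generated modules lying in $\mathcal A$. Since $\mathcal B$ is cogenerating, closed under direct products, and closed under unions of well-ordered chains, Lemma~\ref{lem:filter_cl} renders it filter-closed, so Proposition~\ref{prop:ext_submod} applies: for every uncountable regular $\lambda$ and every $A\in\mathcal A$ we obtain a $\lambda$-dense system $\mathcal C_\lambda$ of ${<}\lambda$-presented submodules of $A$ with $A/N\in\mathcal A$ for all $N\in\mathcal C_\lambda$.

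Applied to a countably generated $A\in\mathcal A$ with $\lambda=\aleph_1$, this forces $A$ itself to be countably presented, since any countable generating set is absorbed by some $N\in\mathcal C_{\aleph_1}$ and then $N=A$. Picking a presentation $0\to K\to P\to A\to 0$ with $P$ countably generated projective and $K$ countably generated, hereditariness places $K$ in $\mathcal A$, so the same argument upgrades $K$ to countably presented; iterating produces a projective resolution of $A$ by countably generated projectives. Hence every $A\in\mathcal S$ is in fact strongly countably presented.

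The heart of (1) is then the claim that every $A\in\mathcal A$ is $\mathcal S$-filtered, which I would prove by induction on the least cardinal $\kappa$ of a generating set of $A$. The case $\kappa\le\aleph_0$ is the previous paragraph. For regular uncountable $\kappa$, Lemma~\ref{lem:reg_card} extracts from $\mathcal C_\kappa$ a continuous filtration $(M_\alpha\mid\alpha\le\kappa)$ of $A$ with each $M_\alpha\in\mathcal C_\kappa$. For singular $\kappa$, combining the dense systems $\mathcal C_\lambda$ furnished by Proposition~\ref{prop:ext_submod} for every regular $\lambda$ with $\cf\kappa<\lambda<\kappa$ and feeding them into Lemma~\ref{lem:SSC} yields a filtration $(M_\alpha\mid\alpha\le\cf\kappa)$ with $M_\alpha\in\mathcal C_{\kappa_\alpha^+}$. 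In both subcases the sequence $0\to M_{\alpha+1}/M_\alpha\to A/M_\alpha\to A/M_{\alpha+1}\to 0$ has outer terms in $\mathcal A$ by property~(3) of Proposition~\ref{prop:ext_submod}, so hereditariness of $\mathfrak C$ places the consecutive factor $M_{\alpha+1}/M_\alpha$ in $\mathcal A$. As this factor has strictly fewer generators than $\kappa$, the induction hypothesis supplies an $\mathcal S$-filtration of it; lifting each such factor filtration back to $M_{\alpha+1}$ via the preimage under $M_{\alpha+1}\to M_{\alpha+1}/M_\alpha$ and concatenating over $\alpha$ produces an $\mathcal S$-filtration of $A$. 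Eklof's lemma then gives $\mathcal S^\perp=\mathcal A^\perp=\mathcal B$, proving (1); (2) follows at once from the Eklof--Trlifaj theorem.

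For (3), $\mathcal B$ is closed under products by definition and under countable direct sums since these are unions of chains of finite products. Proposition~\ref{prop:pure_products} applied to each $M\in\mathcal S$ with $\mathcal G=\mathcal B$ then yields closure of $\mathcal B$ under pure submodules. To handle arbitrary directed colimits, factor any such colimit as an $\aleph_1$-directed colimit of countable directed subcolimits: the latter reduce to unions of $\omega$-chains and lie in $\mathcal B$ by hypothesis, while the outer step uses that $\Ext^1_R(M,-)$ commutes with $\aleph_1$-directed colimits whenever $M$ is strongly countably presented---any such $M$ admits a projective resolution by countably generated projectives $P$, each a summand of $R^{(\omega)}$, and $\Hom_R(R^{(\omega)},-)=(-)^{\omega}$ is a countable product, which commutes with $\aleph_1$-filtered colimits in $\ModR$. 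By (1) this is enough, so $\mathcal B$ is closed under products, pure submodules, and direct limits, i.e.\ definable. The main obstacle I anticipate is the singular inductive step: fitting the $\lambda$-dense systems of Proposition~\ref{prop:ext_submod} cleanly into the hypotheses of Lemma~\ref{lem:SSC}, and using property~(3) together with hereditariness to keep consecutive factors inside $\mathcal A$ throughout the concatenation. If finer control over the resulting filtration is required (for instance to thread it through a prescribed subsystem), Lemma~\ref{lem:hill} is the natural fallback.
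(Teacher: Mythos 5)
Your proposal for parts (1) and (2) follows the paper's strategy quite closely: filter-closedness of $\mathcal B$ via Lemma~\ref{lem:filter_cl}, the dense systems of Proposition~\ref{prop:ext_submod}, Lemma~\ref{lem:reg_card} at regular cardinals and Lemma~\ref{lem:SSC} at singular ones, hereditariness to place the consecutive factors of the resulting filtration back in $\mathcal A$, and Eklof's lemma plus the Eklof--Trlifaj theorem to conclude. One genuine (and welcome) difference: you obtain countable presentability of a countably generated $A\in\mathcal A$ directly from property (2) of Proposition~\ref{prop:ext_submod} with $\lambda=\aleph_1$ (a countable generating set is absorbed by some countably presented $N\in\mathcal C_{\aleph_1}$, forcing $N=A$), whereas the paper first establishes the $\mathcal S$-filtration and then invokes the Hill-type Lemma~\ref{lem:hill} with $N=0$. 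Your shortcut is correct and bypasses Lemma~\ref{lem:hill} entirely; the subsequent dimension shifting to upgrade to \emph{strong} countable presentability is the same in both arguments.

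In part (3) there is one step that fails as written. A countable directed colimit of modules from $\mathcal B$ is the colimit of a cofinal $\omega$-chain $B_0\to B_1\to\dotsb$ whose transition maps need not be injective, so it is \emph{not} a union of a well-ordered chain of submodules and therefore does not lie in $\mathcal B$ ``by hypothesis''. The gap is easily repaired with tools you already have: the telescope presentation $0\to\bigoplus_{n}B_n\to\bigoplus_{n}B_n\to\varinjlim B_n\to 0$ exhibits the colimit as the cokernel of a monomorphism between modules of $\mathcal B$ (countable direct sums really are unions of chains of finite products), and hereditariness then places $\varinjlim B_n$ in $\mathcal B$. Be aware, though, that the paper's route is shorter and treats all direct limits at once: having closure under pure submodules from Proposition~\ref{prop:pure_products}, the kernel of a pure epimorphism out of a module of $\mathcal B$ lies in $\mathcal B$, hereditariness yields closure under pure-epimorphic images, and every direct limit is a pure-epimorphic image of the corresponding direct sum. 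Your two-stage reduction---countable colimits first, then $\aleph_1$-directed colimits via the fact that $\Ext^1_R(M,-)$ commutes with $\aleph_1$-directed colimits for strongly countably presented $M$---is sound once the countable case is patched, but it is considerably more work than necessary.
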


\begin{proof}
$(1)$. First, we claim that $\mathfrak C$ is generated by a representative set $\mathcal S$ of the class of all countably presented modules from $\mathcal A$. To do this, in view of Eklof's Lemma (\cite[Lemma 3.1.2]{GT} or \cite[Lemma 1]{ET}), it is enough to prove that every module $M \in \mathcal A$ has an $\mathcal S$-filtration $(M_\alpha \mid \alpha\leq\sigma)$.

We will prove this by induction on the minimal cardinal $\kappa$ \st $M$ is $\kappa$-presented. If $\kappa$ is finite or countable, then we are done since $M$ itself is isomorphic to a module from $\mathcal S$. Assume that $\kappa$ is uncountable. By our assumption and Lemma~\ref{lem:filter_cl}, the class $\mathcal B$ is filter-closed and cogenerating. Hence, we can fix for each regular uncountable $\lambda\leq\kappa$ a family $\mathcal C_\lambda$ of $<\lambda$-presented modules given by Proposition~\ref{prop:ext_submod} used with $\mathcal G=\mathcal B$. Note that we can without loss of generality assume that $\mathcal C_\lambda$ is a $\lambda$-dense system, since we always can add the zero module to $\mathcal C_\lambda$ without changing its properties. Then, we can use Lemma~\ref{lem:reg_card} if $\kappa$ is regular, and Lemma~\ref{lem:SSC} if $\kappa$ is singular to obtain a filtration $(L_\beta \mid \beta\leq\tau)$ of $M$ \st for each $\beta<\tau$
\begin{enumerate}
\item[(i)] $L_\beta$ is $<\kappa$-presented, and
\item[(ii)] $M/L_\beta \in \mathcal A$.
\end{enumerate}
We also have $L_{\beta + 1}/L_\beta \in \mathcal A$ since it is a kernel of the projection $M/L_\beta \to M/L_{\beta + 1}$ and $\mathfrak C$ is hereditary. Thus, each of the modules $L_{\beta+1}/L_\beta$ has an $\mathcal S$-filtration by the inductive hypothesis, so we can refine the filtration $(L_\beta \mid \beta\leq\tau)$ to an $\mathcal S$-filtration $(M_\alpha \mid \alpha\leq\sigma)$ of $M$ and the claim is proved. 

Let us note that for the induction step at singular cardinals $\kappa$, we can alternatively use the full version of Shelah's Singular Compactness Theorem, considering $\mathcal S$-filtered modules as ``free'' (cf.~\cite[XII.1.14 and IV.3.7]{EM} or~\cite{E}).

It is still left to show that all modules in $\mathcal S$ are actually strongly countably presented. Note that it is enough to prove that every countably generated module $M \in \mathcal A$ is countably presented. If we prove this, we can take for every module $N \in \mathcal S$ a presentation $0 \to K \to R^{(\omega)} \to N \to 0$ with $K$ a countably generated module. Since $\mathfrak C$ is hereditary, we have $K \in \mathcal A$. Now, if $K$ is countably presented, it must be isomorphic to a module from $\mathcal S$ again, and we can proceed by induction to construct a free resolution of $N$ consisting of countably generated free modules.

So fix $M \in \mathcal A$ countably generated. Then $M$ is $\mathcal S$-filtered by the arguments above. Hence, we can consider the family $\mathcal F$ given by Lemma~\ref{lem:hill} for $M$. To finish our proof, we use $(4)$ from this lemma with $N = 0$ and $X$ a countable set of generators of $M$ as parameters.
\smallskip

$(2)$. This follows from $(1)$ by \cite[Theorem 3.2.1]{GT}.

\smallskip

$(3)$. Note that $\mathcal B$ is always closed under arbitrary direct products. It is closed under infinite direct sums too since these are precisely $\mathfrak F$-products corresponding to Fr\' echet filters $\mathfrak F$. Then $\mathcal B$ is closed under pure submodules by $(1)$ and Proposition~\ref{prop:pure_products}. Further, $\mathcal B$ is closed under pure epimorphic images and, therefore, also under arbitrary direct limits since $\mathfrak C$ is hereditary. Hence $\mathcal B$ is definable.
\end{proof}

\begin{rem}
We can actually prove a little more than we state in Theorem~\ref{thm:main}. Notice that the proof of $(1)$ and $(2)$ works also for any hereditary cotorsion pair cogenerated (as a cotorsion pair) by some cogenerating (in the module category) filter-closed class $\mathcal G$.
\end{rem}


To conclude this section, we will discuss the relation of Theorem~\ref{thm:main} to tilting theory. In fact, it turns out that the countable type and definability of tilting classes is a rather easy consequence of Theorem~\ref{thm:main}. This allows us to give a more direct argumentation for most of the proof of the fact that all tilting classes are of finite type \cite{BH,BS}.

Recall that $\mathfrak T = (\mathcal A, \mathcal B)$ is called a \emph{tilting cotorsion pair} if $\mathfrak T$ is hereditary, $\mathcal A$ consists of modules of finite projective dimension, and $\mathcal B$ is closed under direct sums. In this case, $\mathcal B$ is said to be a \emph{tilting class}.

\begin{thm}\label{thm:tilting} Let $R$ be a ring and $\mathfrak T = (\mathcal A, \mathcal B)$ be a tilting cotorsion pair. Then $\mathfrak T$ is generated by a set of strongly countably presented modules and $\mathcal B$ is definable.
\end{thm}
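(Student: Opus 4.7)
The plan is to reduce everything to Theorem~\ref{thm:main}: the only hypothesis of that theorem not visible on the nose from the tilting setup is that $\mathcal B$ be closed under unions of well-ordered chains, so this is what I need to verify. In fact I would prove the stronger statement that $\mathcal B$ is closed under arbitrary direct limits. The argument will use only heredity of $\mathfrak T$ together with closure of $\mathcal B$ under direct sums; the finite-projective-dimension hypothesis on $\mathcal A$ plays no role in this step and would enter only later if one wished to upgrade countable type to finite type as in \cite{BS}.

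First I would record that, for any hereditary cotorsion pair, the higher $\Ext$ groups $\Ext^i_R(\mathcal A,\mathcal B)$ vanish for every $i \geq 1$. Given $A \in \mathcal A$ and a projective resolution of $A$ with syzygies $\Omega^k A$, heredity forces $\Omega^k A \in \mathcal A$, so dimension shifting yields $\Ext^{k+1}_R(A,B) \cong \Ext^1_R(\Omega^k A, B) = 0$ for every $B \in \mathcal B$.

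Next, for any directed system $(B_i, g_{ji} \mid i \leq j \in I)$ with $B_i \in \mathcal B$, I would invoke the standard filtered-colimit presentation
$$ 0 \to \bigoplus_{i \leq j} B_i \to \bigoplus_{i \in I} B_i \to \varinjlim_{i \in I} B_i \to 0, $$
in which the middle arrow is the difference of the two inclusions determined by the maps $g_{ji}$. Both direct sums on the left lie in $\mathcal B$ because $\mathcal B$ is closed under direct sums. Applying $\Hom_R(M,-)$ for $M \in \mathcal A$, the relevant segment of the long exact sequence reads
$$ \Ext^1_R(M, \textstyle\bigoplus_{i \in I} B_i) \to \Ext^1_R(M, \varinjlim_{i \in I} B_i) \to \Ext^2_R(M, \textstyle\bigoplus_{i \leq j} B_i), $$
and both outer terms vanish by the previous paragraph. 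Consequently $\varinjlim_i B_i \in \mathcal B$.

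With this closure in hand, Theorem~\ref{thm:main} applies directly and delivers both claims simultaneously: $\mathfrak T$ is generated by a set of strongly countably presented modules and $\mathcal B$ is definable. The main conceptual point is the realisation of a filtered colimit as the cokernel of a map between two objects already known to lie in $\mathcal B$; once that is seen, everything else is routine and I do not anticipate any essential obstacle.
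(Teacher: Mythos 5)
There is a genuine gap, and it sits exactly at the ``main conceptual point'' you identify. The sequence
$$ 0 \to \bigoplus_{i \leq j} B_i \to \bigoplus_{i \in I} B_i \to \varinjlim_{i \in I} B_i \to 0 $$
is only right exact: the difference map $\Phi$ with components $\nu_i - \nu_j g_{ji}$ presents $\varinjlim B_i$ as a cokernel, but it is not injective for a general directed poset $I$. Already for a three-element chain $i_0 \leq i_1 \leq i_2$ the element of the domain placing $b$ in the $(i_0,i_1)$-component, $-b$ in the $(i_0,i_2)$-component and $g_{i_1 i_0}(b)$ in the $(i_1,i_2)$-component lies in $\Ker\Phi$, because $g_{i_2 i_0}=g_{i_2 i_1}g_{i_1 i_0}$. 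The telescope presentation is a genuine short exact sequence only for towers indexed by $\omega$ (this is how it is used in Lemma~\ref{lem:ext_to_lim1}); in general the correct resolution is the bar resolution of Lemma~\ref{lem:lim_pres}, whose length is governed by the cardinality of $I$ via Mitchell's theorem (Lemma~\ref{lem:cdim}), so no two-term resolution can exist once $|I| \geq \aleph_1$. Your dimension-shifting argument can be patched for $|I|<\aleph_\omega$ using the finite resolution of Lemma~\ref{lem:lim_pres} together with the vanishing of all higher $\Ext$'s, but it does not close up for arbitrary index sets, and in particular it does not even cover unions of well-ordered chains of uncountable length, which is precisely the hypothesis of Theorem~\ref{thm:main} you need.

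The claim that the finite-projective-dimension hypothesis ``plays no role'' should also have been a warning sign: were your argument correct, it would show that for \emph{every} hereditary cotorsion pair with $\mathcal B$ closed under direct sums, $\mathcal B$ is closed under direct limits, which is not known and is much stronger than what is being proved. The paper's proof uses the projective dimension bound essentially: since $\mathcal A$ is closed under direct sums, there is $n$ with $\pd A \leq n$ for all $A\in\mathcal A$, and one inducts on $n$. The class $\mathcal D = \Ker\Ext^2_R(\mathcal A,-)$ is the right class of a tilting cotorsion pair with bound $n-1$, hence definable by induction and thus closed under pure submodules; a dimension shift along a pure-exact sequence $0\to K\to B\to B''\to 0$ with $B\in\mathcal B$ (so $K\in\mathcal D$) then gives $\Ext^1_R(A,B'')\hookrightarrow\Ext^2_R(A,K)=0$, i.e.\ $\mathcal B$ is closed under pure-epimorphic images. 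Combined with closure under direct sums this yields closure under arbitrary direct limits, because $\varinjlim B_i$ is a pure-epimorphic image of $\bigoplus_i B_i$ --- this last step is the correct replacement for your short exact sequence. After that, Theorem~\ref{thm:main} applies as you intended.
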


\begin{proof} Notice that since $\mathcal A$ is closed under direct sums, there is $n < \omega$ \st projective dimension of any module from $\mathcal A$ is at most $n$. We will prove the theorem by induction on this $n$.

If the $n=0$, then $\mathcal B =$ Mod-$R$ and the statement follows trivially. Let $n>0$. Then it is easy to see that the class $\mathcal D = \Ker\Ext^2_R(\mathcal A,-)$ is tilting and in the corresponding tilting cotorsion pair $(\mathcal C,\mathcal D)$, all modules in $\mathcal C$ have projective dimension $<n$ (cf. \cite[Lemma 4.8]{AST}). Thus $\mathcal D$ is definable by the inductive hypothesis. In particular, it is closed under pure submodules. By a simple dimension shifting argument, one observes that $\mathcal B$ is closed under pure-epimorphic images. Since, by our assumption, $\mathcal B$ is closed under direct sums, it follows that $\mathcal B$ is closed under arbitrary direct limits. Thus we may apply Theorem~\ref{thm:main} to $\mathfrak T$ to finish the proof.
\end{proof}


\section{Definability}
\label{sec:def}

In this section, we will give a description of which coherent functors define the class $\mathcal B$ of a cotorsion pair $(\mathcal A,\mathcal B)$ satisfying the hypotheses of TCMC. Our aim is twofold: First, vanishing of a coherent functor on a module $M$ translates to the fact that a certain implication between pp-formulas is satisfied in $M$, \cite[\S 2.1]{C}. So there is a clear model-theoretic motivation. Second, proving that the cotorsion pair is of finite type amounts to showing that $\mathcal B$ is defined by a family of coherent functors of the form $\Coker \Hom_R(f,-)$ where $f: X \to Y$ is an inclusion of $X\in\modR$ into a finitely generated projective module $Y$. The projectivity of $Y$ is essential here: it implies that $Y\in\mathcal A$ which in turn means that the functor $\Coker \Hom_R(f,-)$ vanishes on all modules from $\mathcal B$ if and only if $Y/X \in\mathcal A$. Compare this with Remark $(ii)$ at the end of the section.

Even though the finite type question still remains open, we will describe a family of coherent functors defining $\mathcal B$ in Theorem~\ref{thm:char_B}---this can be viewed as a counterpart of~\cite[Theorem A (3)]{K} for module categories. We will also characterize the countably presented modules from the class $\mathcal A$ in Theorem~\ref{thm:char_A}. In both tasks, the key role is played by the ideal $\mathfrak I$ of the category $\modR$ consisting of the morphisms which, when considered in $\ModR$, factor through some module from $\mathcal A$.

For the whole section, let $R$ be a \emph{right coherent} ring; that is, finitely (and also countably) presented modules are precisely the strongly finitely (countably) presented ones, respectively. We will deal with countable direct systems of finitely generated modules of the form:
$$C_0 \overset{f_0}\to C_1 \overset{f_1}\to C_2 \to \dotsb \to C_n \overset{f_n}\to C_{n+1} \to \dotsb .$$
Here, we write for simplicity $f_n$ instead of $f_{n+1,n}$. We start with recalling some important preliminary results whose proofs are essentially in~\cite{BH} and~\cite{ABH}:

\begin{lem} \label{lem:ext_to_lim1}
Let $(C_n, f_n)_{n<\omega}$ be a countable direct system of $R$-modules. Let $M$ be a module \st $\Ext^1_R(\varinjlim C_n,M) = 0$. Then $\varprojlim^1 \Hom_R(C_n,M) = 0$.
\end{lem}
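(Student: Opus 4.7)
The plan is to reduce the statement to the standard Milnor-type short exact sequence presenting a countable direct limit as the cokernel of a ``shift minus identity'' map on a direct sum, and then compare the resulting $\Hom$-sequence with the defining exact sequence of $\varprojlim$ and $\varprojlim^1$ recalled in the preliminaries.

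More precisely, first I would write down the short exact sequence
$$ 0 \to \bigoplus_{n<\omega} C_n \overset{1-\sigma}\to \bigoplus_{n<\omega} C_n \to \varinjlim C_n \to 0, $$
where $\sigma$ restricts on the $n$-th summand to $f_n:C_n\to C_{n+1}$ followed by the inclusion into the direct sum; injectivity of $1-\sigma$ and exactness in the middle are verified by an elementary component-wise computation (the inverse on any element has only finitely many nonzero components because of $\omega$-indexing). The quotient is the direct limit by the universal property.

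Next, I would apply $\Hom_R(-,M)$. Using $\Hom_R(\bigoplus_n C_n, M) \cong \prod_n \Hom_R(C_n, M)$, the initial part of the resulting long exact sequence reads
$$ 0 \to \Hom_R(\varinjlim C_n, M) \to \prod_n \Hom_R(C_n,M) \overset{\delta}\to \prod_n \Hom_R(C_n,M) \to \Ext^1_R(\varinjlim C_n, M). $$
A direct inspection shows that $\delta$ sends a sequence $(g_n)_{n<\omega}$ to $(g_n - g_{n+1}\circ f_n)_{n<\omega}$; this is exactly the map $\Delta$ of Section~\ref{sec:prelim} for the inverse system $(\Hom_R(C_n,M), \Hom_R(f_n,M))$.

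Finally, comparing with the four-term exact sequence
$$ 0 \to \varprojlim \Hom_R(C_n,M) \to \prod_n \Hom_R(C_n,M) \overset{\Delta}\to \prod_n \Hom_R(C_n,M) \to {\varprojlim}^1 \Hom_R(C_n,M) \to 0 $$
from the preliminaries, we obtain $\varprojlim^1 \Hom_R(C_n,M) = \Coker(\delta)$. By exactness of the $\Hom$-$\Ext$-sequence, this cokernel embeds into $\Ext^1_R(\varinjlim C_n, M)$, which vanishes by hypothesis; hence $\varprojlim^1 \Hom_R(C_n,M) = 0$. The only step requiring any care is the identification of $\delta$ with $\Delta$, which amounts to tracking signs and indexing conventions on the generators of the direct sum; the rest is a transparent consequence of the two exact sequences.
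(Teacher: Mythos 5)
Your proof is correct and follows essentially the same route as the paper: both apply $\Hom_R(-,M)$ to the canonical presentation $0 \to \bigoplus C_n \to \bigoplus C_n \to \varinjlim C_n \to 0$, identify the induced map on products with the map $\Delta$ computing $\varprojlim^1$, and conclude from $\Ext^1_R(\varinjlim C_n,M)=0$ that this map is surjective, hence $\varprojlim^1 \Hom_R(C_n,M)=0$.
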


\begin{proof}
The proof here is in fact a part of the proof of \cite[Theorem 5.1]{BH}. If we apply the functor $\Hom_R(-,M)$ to the canonical presentation 
$$
0 \to \bigoplus C_n \overset{\phi}\to \bigoplus C_n \to \varinjlim C_n \to 0
$$
of the countable direct limit $\varinjlim C_n$, we get exactly the first three terms of the exact sequence defining the first derived functor of inverse limit of the system $(H_n \mid n<\omega)$, where $H_n = \Hom_R(C_n,M)$:
$$
0 \to \varprojlim H_n \to \prod H_n \overset{\Delta}\to \prod H_n \to {\varprojlim}^1 H_n \to 0
$$
Since $\Ext^1_R(\varinjlim C_n,M) = 0$, the map $\Delta = \Hom_R(\phi,M)$ is surjective. Hence $\varprojlim^1 H_n = 0$.
\end{proof}

\begin{cor} \label{cor:ext_to_ML}
Let $(C_n, f_n)_{n<\omega}$ be a countable direct system of finitely generated modules. Let $M$ be a module \st $\Ext^1_R(\varinjlim C_n,M^{(\omega)}) = 0$. Then the inverse system $(\Hom_R(C_n,M), \Hom_R(f_n,M))_{n<\omega}$ is Mittag-Leffler.
\end{cor}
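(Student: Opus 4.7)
The proof should be a short composition of the two results just cited. The plan is to apply Lemma~\ref{lem:ext_to_lim1} to the module $M^{(\omega)}$ in place of $M$, and then invoke part $(2)$ of Proposition~\ref{prop:ML} which characterises the Mittag-Leffler property of a countable inverse system $(H_n,h_n)$ via the vanishing of $\varprojlim^1 H_n^{(\omega)}$.

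Concretely, set $H_n = \Hom_R(C_n,M)$ and $h_n = \Hom_R(f_n,M)$. The key identification is the canonical isomorphism of inverse systems
$$ \bigl(H_n^{(\omega)},\, h_n^{(\omega)}\bigr)_{n<\omega} \;\cong\; \bigl(\Hom_R(C_n, M^{(\omega)}),\, \Hom_R(f_n, M^{(\omega)})\bigr)_{n<\omega}, $$
which is available because each $C_n$ is finitely generated, so that $\Hom_R(C_n,-)$ commutes with the countable direct sum $M^{(\omega)} = \bigoplus_{k<\omega} M$. This is the only place where finite generation of the $C_n$ enters.

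Now feed the module $M^{(\omega)}$ into Lemma~\ref{lem:ext_to_lim1}. The hypothesis $\Ext^1_R(\varinjlim C_n, M^{(\omega)}) = 0$ is exactly what we have been given, hence the lemma yields $\varprojlim^1 \Hom_R(C_n,M^{(\omega)}) = 0$. Transporting this across the isomorphism above gives $\varprojlim^1 H_n^{(\omega)} = 0$, and part $(2)$ of Proposition~\ref{prop:ML} then concludes that $(H_n,h_n)_{n<\omega}$ is Mittag-Leffler.

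There is essentially no obstacle here: the statement really just packages Lemma~\ref{lem:ext_to_lim1} together with the Emmanouil--Ara--Bazzoni--Herbera characterisation of Mittag-Leffler systems, and the finite generation hypothesis is used solely to pull the direct sum inside the $\Hom$. The only thing to be mindful of is that the isomorphism identifies not just the groups $H_n^{(\omega)}$ but also the transition maps, so that it is the ``correct'' $\varprojlim^1$ which vanishes.
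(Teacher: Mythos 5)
Your argument is correct and is precisely the second of the two routes the paper itself indicates (``\dots or from Proposition~\ref{prop:ML}''): apply Lemma~\ref{lem:ext_to_lim1} to $M^{(\omega)}$, use finite generation of the $C_n$ to identify $\Hom_R(C_n,M^{(\omega)})$ with $\Hom_R(C_n,M)^{(\omega)}$ compatibly with the transition maps, and conclude by Proposition~\ref{prop:ML}~$(2)$. Nothing further is needed.
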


\begin{proof}
This follows either immediately from Lemma~\ref{lem:factor2} for $\mathcal G = \{N \mid N \cong M^{(\omega)}\}$, or from Proposition~\ref{prop:ML}. Note that in both cases we use the fact that all modules $C_n$ are finitely generated.
\end{proof}

The following lemma gives us information about a syzygy of a countable direct limit of finitely presented modules and it will be useful for computation.

\begin{lem} \label{lem:horseshoe}
Let $(C_n, f_n)_{n<\omega}$ be a countable direct system of finitely presented modules. Then there exists a countable direct system
$$
\begin{CD}
	@.		\vdots	@.		\vdots	@.		\vdots			\\
@.			@AAA			@AAA			@AAA			\\
0	@>>>	D_2	@>{i_2}>>	P_2	@>{p_2}>>	C_2	@>>>	0	\\
@.			@A{g_1}AA		@A{s_1}AA		@A{f_1}AA		\\
0	@>>>	D_1	@>{i_1}>>	P_1	@>{p_1}>>	C_1	@>>>	0	\\
@.			@A{g_0}AA		@A{s_0}AA		@A{f_0}AA		\\
0	@>>>	D_0	@>{i_0}>>	P_0	@>{p_0}>>	C_0	@>>>	0
\end{CD}
$$

\smallskip\noindent
of short exact sequences of finitely presented modules \st $P_n$ is projective and $s_n$ is split mono for each $n<\omega$. In particular, $\varinjlim P_n$ is projective.
\end{lem}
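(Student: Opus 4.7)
The plan is to build the direct system by induction on $n$, arranging at each step that the map $P_n\to P_{n+1}$ is a split monomorphism by simply enlarging the next projective cover with a direct summand.

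For the base case, since $C_0$ is finitely presented over a right coherent ring, choose a short exact sequence $0\to D_0\overset{i_0}\to P_0\overset{p_0}\to C_0\to 0$ with $P_0$ finitely generated projective and $D_0$ finitely presented.

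For the inductive step, suppose the short exact sequence at level $n$ has already been constructed with $P_n$ finitely generated projective and $D_n$ finitely presented. Pick any finitely generated projective $P_{n+1}'$ together with an epimorphism $q_{n+1}\colon P_{n+1}'\twoheadrightarrow C_{n+1}$, and set
$$ P_{n+1} \;=\; P_n\oplus P_{n+1}', \qquad s_n\;=\;\begin{pmatrix}\ident_{P_n}\\ 0\end{pmatrix},\qquad p_{n+1}\;=\;\bigl(f_n\circ p_n,\; q_{n+1}\bigr). $$
By construction $s_n$ is a split mono, $p_{n+1}$ is surjective (already $q_{n+1}$ is), and the square $p_{n+1}\circ s_n = f_n\circ p_n$ commutes. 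Define $D_{n+1}=\Ker p_{n+1}$ and let $g_n\colon D_n\to D_{n+1}$ be the unique map induced by $s_n\circ i_n$; it exists and is well-defined because $p_{n+1}\circ s_n\circ i_n = f_n\circ p_n\circ i_n = 0$. This yields the desired commutative diagram at level $n+1$. Finite presentation of $D_{n+1}$ follows since $C_{n+1}$ is finitely presented, $P_{n+1}$ is finitely generated projective, and $R$ is right coherent.

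For the final assertion about $\varinjlim P_n$, the split monos $s_n$ give isomorphisms $P_{n+1}\cong P_n\oplus P_{n+1}'$ compatible with the transition maps, so telescoping yields
$$ \varinjlim_{n<\omega} P_n \;\cong\; P_0 \oplus \bigoplus_{n\geq 1} P_n', $$
which is a direct sum of (finitely generated) projective modules, hence projective. I do not foresee any serious obstacle: the only subtle point is ensuring $s_n$ is split, and this is exactly what the ``add a fresh projective summand'' trick takes care of, at the mild cost of letting $P_n$ grow with $n$ rather than being a single fixed projective resolution of $C_n$.
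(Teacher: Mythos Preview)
Your proof is correct and follows essentially the same approach as the paper: choose a finite projective presentation of $C_0$, then at each step enlarge by a fresh finitely generated projective $Q$ mapping onto $C_{n+1}$ so that $P_{n+1}=P_n\oplus Q$ with $s_n$ the canonical inclusion, and invoke right coherence (a standing hypothesis in this section) to get $D_{n+1}$ finitely presented. Your telescoping argument for the projectivity of $\varinjlim P_n$ is a welcome elaboration of what the paper leaves as ``clear''.
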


\begin{proof}
We will construct the short exact sequences by induction on $n$. For $n=0$, let $0 \to D_0 \overset{i_0}\to P_0 \overset{p_0}\to C_0 \to 0$ be a short exact sequence with $P_0$ projective finitely generated. Then $D_0$ is finitely generated, hence finitely presented since we are working over a right coherent ring. If $0 \to D_n \overset{i_n}\to P_n \overset{p_n}\to C_n \to 0$ has already been constructed, let $q: Q \to C_{n+1}$ be an epimorphism \st $Q$ is a finitely generated projective module. Now define $P_{n+1} = P_n \oplus Q$, $s_n: P_n \to P_{n+1}$ as the canonical inclusion, and $p_{n+1} = (f_n p_n,q)$. Then $D_{n+1} = \Ker p_{n+1}$ is finitely presented and $g_n$ is determined by the commutative diagram above. The last assertion is clear.
\end{proof}

Next, we will need a generalized version of Auslander's well-known lemma. It says that $\Ext^1_R(\varinjlim C_i,M) \cong \varprojlim \Ext^1_R(C_i,M)$ whenever $M$ is a pure-injective module. Note that for a countable direct system $(C_n, f_n)_{n<\omega}$, the fact that $M$ is pure-injective implies that $\varprojlim^1 \Hom_R(C_n,M) = 0$. To see this, we will again use the fact that after applying $\Hom_R(-,M)$ on the canonical pure-exact sequence
$$
0 \to \bigoplus C_i \overset{\phi}\to \bigoplus C_i \to \varinjlim C_i \to 0,
\eqno{(\dag)}
$$
we get first three terms of the exact sequence
$$
0 \to \varprojlim H_n \to \prod H_n \overset{\Delta}\to \prod H_n \to {\varprojlim}^1 H_n \to 0
$$
where $H_n = \Hom_R(C_n,M)$. But if $M$ is pure-injective, then applying $\Hom_R(-,M)$ on $(\dag)$ yields an exact sequence and consequently $\varprojlim^1 \Hom_R(C_i,M) = 0$. It turns out that the latter condition is sufficient for $\Ext^1_R(-,M)$ to turn a direct limit into an inverse limit over a right coherent ring:

\begin{lem} \label{lem:gen_auslander}
Let $(C_n, f_n)_{n<\omega}$ be a countable direct system and let $M$ be a module \st $\varprojlim^1 \Hom_R(C_i,M) = 0$. Then $\Ext^1_R(\varinjlim C_i,M) \cong \varprojlim \Ext^1_R(C_i,M)$.
\end{lem}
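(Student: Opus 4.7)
The plan is to compare, via the Hom--direct-limit adjunction, the long exact $\Ext$-sequence associated to a projective presentation of $C=\varinjlim C_n$ with the inverse limit of the long exact $\Ext$-sequences for the finite stages. The hypothesis $\varprojlim^1\Hom_R(C_n,M)=0$ plus a mild Mittag-Leffler property of the projectives will be just enough to make the two agree.

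First I would build a compatible short exact system $0\to D_n\to P_n\to C_n\to 0$ with each $P_n$ projective and each transition map $s_n\colon P_n\to P_{n+1}$ a split monomorphism, exactly as in the argument of Lemma~\ref{lem:horseshoe} (the finite-presentation hypothesis is not needed for this step; one may take $P_{n+1}=P_n\oplus Q$ with $Q$ a free cover of $C_{n+1}$). Passing to direct limits, I obtain a short exact sequence $0\to D\to P\to C\to 0$ with $P=\varinjlim P_n$ projective (because any direct limit along split monomorphisms of projectives is projective). Applying $\Hom_R(-,M)$ to this sequence and invoking projectivity of $P$ yields the exact sequence
$$0\to \Hom_R(C,M)\to \Hom_R(P,M)\to \Hom_R(D,M)\to \Ext^1_R(C,M)\to 0.$$
Using the basic identity $\Hom_R(\varinjlim X_n,M)=\varprojlim \Hom_R(X_n,M)$, this rewrites as
$$0\to \varprojlim \Hom_R(C_n,M)\to \varprojlim \Hom_R(P_n,M)\to \varprojlim \Hom_R(D_n,M)\to \Ext^1_R(C,M)\to 0. \qquad (\star)$$

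Next, I would analyse the inverse limit of the 4-term $\Ext$-sequences at the finite stages,
$$0\to \Hom_R(C_n,M)\to \Hom_R(P_n,M)\to \Hom_R(D_n,M)\to \Ext^1_R(C_n,M)\to 0,$$
by splitting each through the image $K_n:=\Img\bigl(\Hom_R(P_n,M)\to \Hom_R(D_n,M)\bigr)$. Because every $s_n$ is split mono, each $\Hom_R(s_n,M)$ is surjective, so $(\Hom_R(P_n,M))_n$ has surjective transition maps and therefore $\varprojlim^1\Hom_R(P_n,M)=0$. Feeding $\varprojlim^1\Hom_R(C_n,M)=0$ (the hypothesis) and $\varprojlim^1\Hom_R(P_n,M)=0$ into the six-term inverse-limit sequence for $0\to \Hom_R(C_n,M)\to \Hom_R(P_n,M)\to K_n\to 0$ gives both short exactness of $0\to \varprojlim\Hom_R(C_n,M)\to \varprojlim \Hom_R(P_n,M)\to \varprojlim K_n\to 0$ and the vanishing $\varprojlim^1 K_n=0$. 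The six-term sequence for $0\to K_n\to \Hom_R(D_n,M)\to \Ext^1_R(C_n,M)\to 0$ then produces
$$0\to \varprojlim K_n\to \varprojlim \Hom_R(D_n,M)\to \varprojlim \Ext^1_R(C_n,M)\to 0. \qquad (\star\star)$$

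Finally, comparing $(\star)$ with $(\star\star)$, the image of $\varprojlim \Hom_R(P_n,M)\to \varprojlim \Hom_R(D_n,M)$ is precisely $\varprojlim K_n$, so both $\Ext^1_R(C,M)$ and $\varprojlim \Ext^1_R(C_n,M)$ identify with the same quotient $\varprojlim \Hom_R(D_n,M)/\varprojlim K_n$. This gives the claimed isomorphism $\Ext^1_R(\varinjlim C_n,M)\cong \varprojlim \Ext^1_R(C_n,M)$, naturally in $M$. The main obstacle is organisational rather than conceptual: keeping the two six-term inverse-limit sequences lined up with $(\star)$ and checking that the comparison map between the two quotient descriptions of $\Ext^1_R(C,M)$ is indeed the canonical one; the non-trivial input ($\varprojlim^1\Hom_R(P_n,M)=0$) is secured for free by the split-mono construction in Step 1.
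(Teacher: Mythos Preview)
Your proof is correct and follows essentially the same route as the paper's: build the horseshoe-type system $0\to D_n\to P_n\to C_n\to 0$ with split-mono transition maps on the projectives, split the resulting four-term $\Ext$-sequences through $K_n=\Img i_n^*$, use $\varprojlim^1\Hom_R(P_n,M)=0$ and the hypothesis to get $\varprojlim^1 K_n=0$, and then compare the two exact rows. The only cosmetic difference is that you derive $\varprojlim^1 K_n=0$ from the six-term sequence while the paper invokes right exactness of $\varprojlim^1$ directly, and you note (correctly) that the horseshoe construction does not actually need the finite-presentation hypothesis of Lemma~\ref{lem:horseshoe}.
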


\begin{proof}
Consider the direct system of short exact sequences $0 \to D_n \overset{i_n}\to P_n \overset{p_n}\to C_n \to 0$ given by Lemma~\ref{lem:horseshoe}. After applying $\Hom_R(-,M)$, we get an inverse system of exact sequences
$$ 0 \to \Hom_R(C_n,M) \overset{p_n^*}\to \Hom_R(P_n,M) \overset{i_n^*}\to \Hom_R(D_n,M) \overset{\delta_n}\to \Ext^1_R(C_n,M) \to 0. $$
By assumption, the following short sequence is exact:
$$ 0 \to \varprojlim \Hom_R(C_n,M) \to \varprojlim \Hom_R(P_n,M) \to \varprojlim \Img i_n^* \to 0. $$

On the other hand, it follows from Proposition~\ref{prop:ML} that $\varprojlim^1 \Hom_R(P_n,M) = 0$ since $(\Hom_R(P_n,M), \Hom_R(s_n,M))_{n<\omega}$ is a countable inverse system with all the maps (split) epic. Moreover, $\varprojlim^1 \Img i_n^* = 0$ since $\varprojlim^1$ is right exact on countable inverse systems. Hence, the following sequence is also exact:
$$ 0 \to \varprojlim \Img i_n^* \to \varprojlim \Hom_R(D_n,M) \to \varprojlim \Ext^1_R(C_n,M) \to 0. $$

Putting everything together, we have obtained the following diagram with canonical maps and exact rows:
$$
\begin{CD}
\varprojlim \Hom_R(P_n,M)	@>>>	\varprojlim \Hom_R(D_n,M)	@>>>	\varprojlim \Ext^1_R(C_n,M)	@>>>	0	\\
@A{\cong}AA							@A{\cong}AA																	\\
\Hom(\varinjlim P_n,M)		@>>>	\Hom(\varinjlim D_n,M)		@>>>	\Ext^1_R(\varinjlim C_n,M)	@>>>	0
\end{CD}
$$
It follows that $\Ext^1_R(\varinjlim C_n,M) \cong \varprojlim \Ext^1_R(C_n,M)$.
\end{proof}

Now, we will focus on T-nilpotent inverse systems. It is clear that every T-nilpotent countable inverse system is Mittag-Leffler. It turns out that the converse is true precisely when the inverse limit of the system vanishes. This is made precise by the following lemma:

\begin{lem} \label{lem:t-nil}
Let $(H_n, h_n)_{n<\omega}$ be a countable inverse system of abelian groups. Then the following are equivalent:
\begin{enumerate}
\item $(H_n, h_n)_{n<\omega}$ is T-nilpotent,
\item $(H_n, h_n)_{n<\omega}$ is Mittag-Leffler and $\varprojlim H_n = 0$.
\end{enumerate}
\end{lem}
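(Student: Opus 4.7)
The plan is to prove both directions via the structure of ``universal images'' in Mittag-Leffler systems.

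For $(1)\Rightarrow (2)$: I would observe first that T-nilpotency trivially implies Mittag-Leffler, since the descending chain $H_n\supseteq h_n(H_{n+1})\supseteq h_n h_{n+1}(H_{n+2})\supseteq\cdots$ actually becomes zero, hence is certainly stationary. For the vanishing of $\varprojlim H_n$, any thread $(x_n)_{n<\omega}$ satisfies $x_n = h_n h_{n+1}\cdots h_{k-1}(x_k)$ for every $k>n$, so picking $k$ such that this composition is zero forces $x_n=0$.

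For $(2)\Rightarrow (1)$: I would introduce the universal images
$$ H_n^\infty := \bigcap_{k\geq n}\Img\bigl(h_n h_{n+1}\cdots h_{k-1} : H_k \to H_n\bigr). $$
By the Mittag-Leffler hypothesis, for each $n$ this intersection is achieved at some finite stage, namely $H_n^\infty = \Img(H_{k(n)} \to H_n)$ for some $k(n)\geq n$. The key lemma to establish is that the induced transition maps $H_{n+1}^\infty \to H_n^\infty$ are \emph{surjective}. Given $x\in H_n^\infty$ and any $m\geq n+1$, the fibre $\{y\in H_{n+1}\mid h_n(y)=x\}\cap \Img(H_m\to H_{n+1})$ is nonempty (lift $x$ through a composition originating at $H_m$ and then stop one step short). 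These fibres form a decreasing chain in $m$; by Mittag-Leffler they stabilise, and the stable value is $\{y\in H_{n+1}^\infty\mid h_n(y)=x\}$, which is therefore nonempty.

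Once surjectivity is in hand, the argument finishes quickly. The inclusion of sub-systems induces an isomorphism $\varprojlim H_n^\infty \cong \varprojlim H_n$, which vanishes by hypothesis. But an inverse system of abelian groups with surjective transition maps has vanishing inverse limit only if every term is zero (any nonzero element can be lifted forward by surjectivity and pushed backward by the structure maps to produce a nonzero thread). Hence $H_n^\infty = 0$ for every $n$, which by definition of $H_n^\infty$ and the Mittag-Leffler hypothesis says that some composition $h_n h_{n+1}\cdots h_{k-1}$ is zero, i.e.\ the system is T-nilpotent.

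The main obstacle is the surjectivity of $H_{n+1}^\infty\to H_n^\infty$; the rest is routine. Everything else reduces to unwinding definitions plus the standard fact that inverse limits of countable towers with surjective transition maps are exact and conservative (nonzero at some stage forces nonzero limit).
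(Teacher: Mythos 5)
Your proposal is correct and follows essentially the same route as the paper: the paper's proof of $(2)\Rightarrow(1)$ shows by the very same one-step lifting argument (pick a preimage at a level beyond the Mittag-Leffler stabilization point and push it down one stage) that the image of the limit map $\rho_m$ equals the stable image $\Img h_m\cdots h_{s(m)-1}$, which is exactly your surjectivity of $H_{n+1}^\infty\to H_n^\infty$ combined with the standard thread construction for surjective towers. The only difference is packaging---you isolate the universal-image subsystem and quote the fact that a surjective countable tower with zero limit is zero, whereas the paper runs the induction directly---so there is nothing to add or fix.
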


\begin{proof}
$(1) \implies (2)$ follows easily from the definitions. Let us prove $(2) \implies (1)$. For each $m < \omega$, let $s(m) > m$ be minimal \st the chain
$$ H_m \supseteq h_m(H_{m+1}) \supseteq \dotsb \supseteq h_m h_{m+1} \cdots h_{n-1}(H_n) \supseteq \dotsb $$
is constant for $n \geq s(m)$ and let $\rho_m: \varprojlim H_n \to H_m$ be the limit map for each $m$. It follows easily that $s(m) \leq s(m')$ for $m < m'$. We will prove by induction that $\Img \rho_m = \Img h_m h_{m+1} \cdots h_{s(m)-1}$. Together with the assumption that $\varprojlim H_n = 0$, this will imply the T-nilpotency. Let us fix $x_m \in \Img h_m h_{m+1} \cdots h_{s(m)-1}$. All we need to do is to construct by induction a sequence of elements $(x_n)_{m<n<\omega}$ \st $x_n \in \Img h_n h_{n+1} \cdots h_{s(n)-1} \subseteq H_n$ and $x_{n-1} = h_{n-1}(x_n)$ for each $n > m$. Suppose we have already constructed $x_{n-1}$ for some $n$. Then, by the chain condition, there is $y \in H_{s(n)}$ \st $h_{n-1} h_n \cdots h_{s(n)-1}(y) = x_{n-1}$. We can put $x_n = h_n \cdots h_{s(n)-1}(y)$.
\end{proof}

We are in a position now to give a connection between vanishing of $\Ext^i_R$ and the chain conditions mentioned above (the Mittag-Leffler condition and T-nilpotency). We state the connection in the following key lemma:

\begin{lem} \label{lem:ext_Tnil}
Let $(C_n, f_n)_{n<\omega}$ be a countable direct system of finitely presented modules and let $M$ be an arbitrary module. Consider the following conditions:
\begin{enumerate}
\item $\Ext^1_R(\varinjlim C_n, M^{(\omega)}) = \Ext^2_R(\varinjlim C_n, M^{(\omega)}) = 0$.
\item The inverse system $(\Hom_R(C_n,M),\Hom_R(f_n,M))_{n<\omega}$ is Mittag-Leffler and $(\Ext^1_R(C_n,M),\Ext^1_R(f_n,M))_{n<\omega}$ is T-nilpotent.
\item $\Ext^1_R(\varinjlim C_n, M^{(\omega)}) = 0$.
\end{enumerate}
Then (1) implies (2) and (2) implies (3).
\end{lem}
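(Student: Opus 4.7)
The plan is to extract everything from a single dimension shift, carried out through the horseshoe of Lemma~\ref{lem:horseshoe}, combined with the transfer lemmas between direct limits on one side and inverse limits on the other (Lemmas~\ref{lem:ext_to_lim1} and~\ref{lem:gen_auslander}), the decomposition of T-nilpotency supplied by Lemma~\ref{lem:t-nil}, and Proposition~\ref{prop:ML}. Right coherence of $R$ together with finite presentation of each $C_n$ yields a projective resolution of $C_n$ whose terms in degrees $0$ and $1$ are finitely generated, so $\Ext^i_R(C_n, M^{(\omega)}) \cong \Ext^i_R(C_n, M)^{(\omega)}$ for $i = 0, 1$; I will use this identification throughout.

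To prove $(1)\Rightarrow(2)$, the Mittag-Leffler property of $(\Hom_R(C_n, M))_n$ is exactly the content of Corollary~\ref{cor:ext_to_ML}. For T-nilpotency of $(\Ext^1_R(C_n, M))_n$, I invoke Lemma~\ref{lem:t-nil} and verify its two conditions. The inverse limit vanishes because Lemma~\ref{lem:ext_to_lim1} and the hypothesis $\Ext^1_R(\varinjlim C_n, M^{(\omega)}) = 0$ give $\varprojlim^1 \Hom_R(C_n, M^{(\omega)}) = 0$; Lemma~\ref{lem:gen_auslander} then provides $\varprojlim \Ext^1_R(C_n, M^{(\omega)}) \cong \Ext^1_R(\varinjlim C_n, M^{(\omega)}) = 0$, and the canonical summand inclusion $M \hookrightarrow M^{(\omega)}$ makes $\varprojlim \Ext^1_R(C_n, M)$ a direct summand of $\varprojlim \Ext^1_R(C_n, M)^{(\omega)} = 0$. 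For the ML property of $(\Ext^1_R(C_n, M))_n$ I use Lemma~\ref{lem:horseshoe} to build a level-wise short exact sequence $0 \to D_n \to P_n \to C_n \to 0$ of finitely presented modules with each $P_n$ projective and $\varinjlim P_n$ projective; dimension shifting in the associated exact sequence of direct limits gives $\Ext^1_R(\varinjlim D_n, M^{(\omega)}) \cong \Ext^2_R(\varinjlim C_n, M^{(\omega)}) = 0$, and thus $\varprojlim^1 \Hom_R(D_n, M^{(\omega)}) = 0$ by Lemma~\ref{lem:ext_to_lim1}. Applying $\Hom_R(-, M^{(\omega)})$ to the short exact sequence of direct systems, projectivity of $P_n$ truncates the long exact sequence into a short exact sequence of countable inverse systems ending in $\Ext^1_R(C_n, M^{(\omega)})$; right exactness of $\varprojlim^1$ on countable systems then forces $\varprojlim^1 \Ext^1_R(C_n, M)^{(\omega)} = 0$, which by Proposition~\ref{prop:ML}(2) is the desired ML property.

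For $(2)\Rightarrow(3)$, I again apply Lemma~\ref{lem:gen_auslander} with coefficients $M^{(\omega)}$. The required $\varprojlim^1 \Hom_R(C_n, M^{(\omega)}) = 0$ follows from the Mittag-Leffler hypothesis on $(\Hom_R(C_n, M))_n$ and Proposition~\ref{prop:ML}(2). T-nilpotency of $(\Ext^1_R(C_n, M))_n$ is inherited by $(\Ext^1_R(C_n, M)^{(\omega)})_n \cong (\Ext^1_R(C_n, M^{(\omega)}))_n$ because a zero composition of transition maps stays zero after applying $(-)^{(\omega)}$, and a T-nilpotent countable inverse system automatically has vanishing inverse limit; hence $\Ext^1_R(\varinjlim C_n, M^{(\omega)}) \cong \varprojlim \Ext^1_R(C_n, M^{(\omega)}) = 0$. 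The one delicate point in the whole argument is the bookkeeping: one must carefully shuttle $M^{(\omega)}$ past $\Ext^i_R(C_n, -)$ and transfer the properties ML, T-nilpotency, and vanishing of $\varprojlim$ between the systems with coefficients $M$ and $M^{(\omega)}$. The right-coherence hypothesis enters precisely through the natural isomorphism $\Ext^i_R(C_n, M^{(\omega)}) \cong \Ext^i_R(C_n, M)^{(\omega)}$; once this is kept in sight, the argument becomes an orchestrated application of the cited lemmas.
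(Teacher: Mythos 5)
Your proposal is correct and follows essentially the same route as the paper: Corollary~\ref{cor:ext_to_ML} for the Hom-system, the horseshoe of Lemma~\ref{lem:horseshoe} plus dimension shifting for the $\Ext^1$-system, Lemma~\ref{lem:gen_auslander} to pass to inverse limits, and Lemma~\ref{lem:t-nil} to assemble T-nilpotency. The only (harmless) variation is in deducing the Mittag--Leffler property of $(\Ext^1_R(C_n,M))_n$: the paper gets it from the Mittag--Leffler property of $(\Hom_R(D_n,M))_n$ via the fact that epimorphic images of Mittag--Leffler systems are Mittag--Leffler, whereas you go through $\varprojlim^1$ of the $\omega$-th power using right exactness of $\varprojlim^1$ and Proposition~\ref{prop:ML}(2); both are valid.
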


\begin{proof}
(1) $\implies$ (2). Assume $\Ext^1_R(\varinjlim C_n, M^{(\omega)}) = \Ext^2_R(\varinjlim C_n, M^{(\omega)}) = 0$. Then the inverse system $(\Hom_R(C_n,M),\Hom_R(f_n,M))_{n<\omega}$ is Mittag-Leffler by Corollary~\ref{cor:ext_to_ML}. By Proposition~\ref{prop:ML} we have $\varprojlim^1 \Hom_R(C_n,M) = 0$, and subsequently it follows by Lemma~\ref{lem:gen_auslander} that
$$ \varprojlim \Ext^1_R(C_n,M) \cong \Ext^1_R(\varinjlim C_n,M) = 0 $$
Next, let $0 \to D_n \to P_n \to C_n \to 0$ be the countable direct system given by Lemma~\ref{lem:horseshoe}. Since
$$ \Ext^1_R(\varinjlim D_n,M^{(\omega)}) = \Ext^2_R(\varinjlim C_n,M^{(\omega)}) = 0 $$
by dimension shifting, the inverse system $(\Hom_R(D_n,M))_{n<\omega}$ is also Mittag-Leffler by Corollary~\ref{cor:ext_to_ML}. Then $(\Ext^1_R(C_n,M))_{n<\omega}$ is Mittag-Leffler as well, since an epimorfic image of a Mittag-Leffler inverse system is Mittag-Leffler again, \cite[Proposition 13.2.1]{G}. Thus, $(\Ext^1_R(C_n,M))_{n<\omega}$ is T-nilpotent by Lemma~\ref{lem:t-nil}.

$(2) \implies (3)$. Clearly, condition (2) implies that $(\Hom_R(C_n,M^{(\omega)}))_{n<\omega}$ is Mittag-Leffler and $(\Ext^1_R(C_n,M^{(\omega)}))_{n<\omega}$ is T-nilpotent. Hence
$$ \Ext^1_R(\varinjlim C_n,M^{(\omega)}) = \varprojlim \Ext^1_R(C_n,M^{(\omega)}) = 0 $$
by Lemmas~\ref{lem:gen_auslander} and~\ref{lem:t-nil}.
\end{proof}

With the previous lemma in mind, a natural question arises when $\Ext^1_R(f,M)$ is a zero map for a homomorphism $f: X \to Y$ between finitely presented modules. It is possible to characterize such maps $f$ when $\Ext^1_R(f,M)=0$ as $M$ runs over all modules in the right-hand class of a complete cotorsion pair. We state this precisely in Lemma~\ref{lem:ext_zero_map}. In view of~\cite{KS}, the lemma can be viewed as a module-theoretic counterpart of~\cite[Lemmas 3.4 (3) and 3.8]{K}.

\begin{lem} \label{lem:ext_zero_map}
Let $(\mathcal A,\mathcal B)$ be a complete cotorsion pair in $\ModR$ and let $f: X \to Y$ be a homomorphism between $R$-modules. Then the following are equivalent:
\begin{enumerate}
\item $\Ext^1_R(f,B) = 0$ for every $B \in \mathcal B$,
\item $f$ factors through some module in $\mathcal A$.
\end{enumerate}
\end{lem}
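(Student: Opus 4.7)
The plan is to prove the two implications separately, with the easy direction being (2)$\Rightarrow$(1) and the content residing in (1)$\Rightarrow$(2), where completeness of the cotorsion pair supplies the right lifting.

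For (2)$\Rightarrow$(1), suppose $f$ factors as $X \overset{g}\to A \overset{h}\to Y$ with $A \in \mathcal A$. Applying the contravariant functor $\Ext^1_R(-,B)$ for $B \in \mathcal B$, the induced map $\Ext^1_R(f,B)$ factors as $\Ext^1_R(Y,B) \overset{\Ext^1_R(h,B)}\to \Ext^1_R(A,B) \overset{\Ext^1_R(g,B)}\to \Ext^1_R(X,B)$, and the middle group vanishes by definition of the cotorsion pair. Hence $\Ext^1_R(f,B)=0$.

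For (1)$\Rightarrow$(2), I would use the completeness of $(\mathcal A,\mathcal B)$ to take a special $\mathcal A$-precover of $Y$, giving a short exact sequence
$$0 \to B \to A \overset{p}\to Y \to 0$$
with $A \in \mathcal A$ and $B \in \mathcal B$. Call the corresponding extension class $\xi \in \Ext^1_R(Y,B)$. Form the pullback of $p$ along $f$, yielding a commutative diagram with exact rows
$$\begin{CD}
0 @>>> B @>>> P @>>> X @>>> 0 \\
@. @| @VVV @VV{f}V @. \\
0 @>>> B @>>> A @>{p}>> Y @>>> 0
\end{CD}$$
and the top row represents precisely $\Ext^1_R(f,B)(\xi) \in \Ext^1_R(X,B)$. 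By assumption applied to this particular $B \in \mathcal B$, this class vanishes, so the top row splits. Composing a splitting $X \to P$ with the map $P \to A$ yields a homomorphism $X \to A$ whose postcomposition with $p$ equals $f$. Thus $f$ factors through $A \in \mathcal A$.

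The only subtlety is making sure the connecting map is correctly identified with the pullback construction, which is a standard fact about Yoneda/Baer extensions; no real obstacle here. The completeness hypothesis is essential exactly to guarantee that such an $\mathcal A$-precover exists, and without it the argument fails.
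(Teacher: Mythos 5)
Your proposal is correct and follows essentially the same route as the paper: the easy direction via factoring $\Ext^1_R(f,B)$ through $\Ext^1_R(A,B)=0$, and the converse by pulling back a special $\mathcal A$-precover of $Y$ along $f$ and observing that the top row splits since it represents $\Ext^1_R(f,B)$ applied to the extension class. The identification of the pullback row with the image of the extension class under $\Ext^1_R(f,B)$ is the standard fact you note, and the paper uses it implicitly in exactly the same way.
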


\begin{proof}
%
%
%
%

(1) $\implies$ (2). Let $0 \to B \to A \to Y \to 0$ be a special $\mathcal A$-precover of $Y$ and consider the following pull-back diagram:
$$
\begin{CD}
0	@>>>	B	@>>>	Q	@>>>	X	@>>>	0		\\
@.			@|			@VVV		@V{f}VV				\\
0	@>>>	B	@>>>	A	@>>>	Y	@>>>	0
\end{CD}
$$
Then the upper row splits by assumption and $f$ factors through $A$.

(2) $\implies$ (1). This is easy, since the assumption that $f$ factors through some $A \in \mathcal A$ implies that $\Ext^1_R(f,B)$ factors through $\Ext^1_R(A,B)=0$ for each $B \in \mathcal B$.
\end{proof}

Now, we can characterize countably presented modules in the left-hand class of a cotorsion pair satisfying the hypotheses of TCMC. Actually, we state the theorem more generally, for cotorsion pairs satisfying somewhat weaker conditions. Recall that by Theorem~\ref{thm:main}, every cotorsion pair satisfying the hypotheses of TCMC is complete.

\begin{thm} \label{thm:char_A}
Let $R$ be a right coherent ring and $(\mathcal A,\mathcal B)$ be a complete hereditary cotorsion pair with $\mathcal B$ closed under (countable) direct sums. Denote by $\mathfrak I$ the ideal of all morphisms in $\modR$ which factor through some module from $\mathcal A$. Then the following are equivalent for a countably presented module $M$:
\begin{enumerate}
\item $M \in \mathcal A$,
\item $M$ is a direct limit of a countable system $(C_n,f_n)_{n<\omega}$ of finitely presented modules \st $f_n \in \mathfrak I$ for every $n$ and $(\Hom_R(C_n,B),\Hom_R(f_n,B))_{n<\omega}$ is Mittag-Leffler for each $B \in \mathcal B$.
\end{enumerate}
If, in addition, $\mathcal A$ is closed under (countable) direct limits, then these conditions are further equivalent to:
\begin{enumerate}
\item[(3)] $M$ is a direct limit of a countable system $(C_n,f_n)_{n<\omega}$ of finitely presented modules \st $f_n \in \mathfrak I$ for every $n$.
\end{enumerate}
\end{thm}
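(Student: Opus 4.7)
The plan is to prove $(1) \Leftrightarrow (2)$ unconditionally, $(2) \Rightarrow (3)$ trivially, and $(3) \Rightarrow (1)$ by a telescope argument under the extra closure hypothesis on $\mathcal{A}$.

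For $(1) \Rightarrow (2)$, let $M \in \mathcal{A}$ be countably presented. Using right coherence, I would write $M = \varinjlim_{n<\omega}(C_n, f_n)$ as a chain of finitely presented modules and form Lemma~\ref{lem:horseshoe}'s compatible short exact sequences $0 \to D_n \to P_n \to C_n \to 0$ with $P_n$ finitely generated projective. Since $(\mathcal{A}, \mathcal{B})$ is hereditary and $\mathcal{B}$ is closed under countable direct sums, $\Ext^1_R(\varinjlim D_n, B^{(\omega)}) = \Ext^2_R(M, B^{(\omega)}) = 0$ for every $B \in \mathcal{B}$, so Lemma~\ref{lem:factor2} applied to $(D_n)$ with $\mathcal{G} = \mathcal{B}$ produces a strictly increasing $s\colon \omega \to \omega$ that is a \emph{uniform} Mittag-Leffler index for $(\Hom_R(D_n, B))$ over all $B \in \mathcal{B}$. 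Passing to the epimorphic image $(\Ext^1_R(C_n, B))$ preserves this uniform index, and combined with $\varprojlim \Ext^1_R(C_n, B) = \Ext^1_R(M, B) = 0$ (via Lemma~\ref{lem:gen_auslander}, whose hypothesis is secured by $(\Hom_R(C_n, B))$ being Mittag-Leffler from Lemma~\ref{lem:ext_Tnil}), the proof of Lemma~\ref{lem:t-nil} yields $\Ext^1_R(f_{s(m)-1} \circ \cdots \circ f_m, B) = 0$ for every $m$ and every $B \in \mathcal{B}$. By Lemma~\ref{lem:ext_zero_map}, each composition $f_{s(m)-1} \circ \cdots \circ f_m$ lies in $\mathfrak{I}$. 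Iterating $s$ (setting $n_0 = 0$, $n_{k+1} = s(n_k)$) extracts a cofinal subsystem of $(C_n, f_n)$ with all connecting maps in $\mathfrak{I}$, direct limit $M$, and the Mittag-Leffler condition on $(\Hom_R(-, B))$ preserved, since subsystems inherit Mittag-Leffler.

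For $(2) \Rightarrow (1)$, Proposition~\ref{prop:ML} gives $\varprojlim^1 \Hom_R(C_n, B) = 0$, so Lemma~\ref{lem:gen_auslander} yields $\Ext^1_R(M, B) = \varprojlim \Ext^1_R(C_n, B)$; but each $f_n \in \mathfrak{I}$ forces $\Ext^1_R(f_n, B) = 0$ by Lemma~\ref{lem:ext_zero_map}, so the inverse system has all zero transition maps and vanishing inverse limit. Hence $\Ext^1_R(M, B) = 0$ for every $B \in \mathcal{B}$ and $M \in \mathcal{A}$. For $(3) \Rightarrow (1)$ under the extra hypothesis, I would factor each $f_n = \beta_n \circ \alpha_n$ with $\alpha_n\colon C_n \to A_n$, $\beta_n\colon A_n \to C_{n+1}$, and $A_n \in \mathcal{A}$. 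The telescope $(A_n, \alpha_{n+1} \circ \beta_n)_{n<\omega}$ lies in $\mathcal{A}$, and the zigzag $C_0 \to A_0 \to C_1 \to A_1 \to \cdots$ exhibits both $(C_n)$ and $(A_n)$ as cofinal subsystems, so $\varinjlim A_n \cong \varinjlim C_n = M$; closure of $\mathcal{A}$ under countable direct limits then gives $M \in \mathcal{A}$.

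The main obstacle is the uniform T-nilpotency required in $(1) \Rightarrow (2)$: a naive pointwise application of Lemma~\ref{lem:ext_Tnil} only produces T-nilpotent indices that depend on $B$, which is insufficient for extracting a single subsystem whose connecting maps all factor through $\mathcal{A}$. The remedy is to invoke Lemma~\ref{lem:factor2} directly on the syzygy system $(D_n)$, where uniformity over $\mathcal{G} = \mathcal{B}$ is built in, and then transport this uniformity through the epimorphism $(\Hom_R(D_n, -)) \twoheadrightarrow (\Ext^1_R(C_n, -))$ and the proof of Lemma~\ref{lem:t-nil} to obtain a uniform T-nilpotent index suitable for Lemma~\ref{lem:ext_zero_map}.
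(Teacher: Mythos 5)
Your proposal is correct, and for everything except the implication $(1)\Rightarrow(2)$ it coincides with the paper's argument: the paper also proves $(2)\Rightarrow(1)$ via $\varprojlim^1\Hom_R(C_n,B)=0$ and the vanishing of the inverse limit of a system with zero transition maps (packaged as Lemma~\ref{lem:ext_Tnil}\,(2)$\Rightarrow$(3)), and its proof of $(3)\Rightarrow(1)$ is exactly your zigzag/cofinality argument. The genuine difference is how you obtain, in $(1)\Rightarrow(2)$, a T-nilpotency bound that is \emph{uniform} over all $B\in\mathcal B$ --- which you correctly identify as the crux. The paper first applies Lemma~\ref{lem:ext_Tnil} pointwise and then forces uniformity by a direct-sum compactness trick: if for each $l>n_i$ some $B_l\in\mathcal B$ witnessed $\Ext^1_R(g_{l-1}\cdots g_{n_i},B_l)\neq 0$, the single module $\bigoplus_{l>n_i}B_l\in\mathcal B$ would violate T-nilpotency of $(\Ext^1_R(D_n,-))$. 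You instead harvest the uniform index $s$ already built into Lemma~\ref{lem:factor2} by applying it to the syzygy system of Lemma~\ref{lem:horseshoe}, push it through the levelwise surjection $\Hom_R(D_n,B)\twoheadrightarrow\Ext^1_R(C_n,B)$ (which indeed preserves stabilization indices, since $k_{mn}(K_n)=\pi_m(h_{mn}(H_n))$ for a levelwise epimorphism of towers), and then rerun the \emph{proof} of Lemma~\ref{lem:t-nil} --- whose statement alone would not give a $B$-independent bound --- to convert the uniform Mittag-Leffler index plus $\varprojlim\Ext^1_R(C_n,B)=0$ into uniform T-nilpotency. Both mechanisms ultimately rest on closure of $\mathcal B$ under countable direct sums (that hypothesis powers Lemma~\ref{lem:factor2} as well); the paper's route is shorter and works entirely with the stated lemmas, while yours localizes all the uniformity in Lemma~\ref{lem:factor2} at the cost of reopening two proofs and checking the transfer of the index along the connecting epimorphism. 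Both are valid.
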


\begin{proof}
$(1) \implies (2)$. Let us fix (any) countable system $(D_n,g_n)_{n<\omega}$ of finitely presented modules \st $M = \varinjlim D_n$. Assume $M \in \mathcal A$ and $B \in \mathcal B$. Then $B^{(\omega)} \in \mathcal B$ and $\Ext^1_R(\varinjlim D_n, B^{(\omega)}) = \Ext^2_R(\varinjlim D_n, B^{(\omega)}) = 0$ by assumption. So the inverse system $(\Hom_R(D_n,B),\Hom_R(g_n,B))_{n<\omega}$ is Mittag-Leffler and the system $(\Ext^1_R(D_n,B),\Ext^1_R(g_n,B))_{n<\omega}$ is T-nilpotent for each $B \in \mathcal B$ by Lemma~\ref{lem:ext_Tnil}.

Now, we will by induction construct a strictly increasing sequence $n_0 < n_1 < \dotsb$ of natural numbers \st the compositions
$$ f_i = g_{n_{i+1}-1} \dots g_{n_i+1} g_{n_i}: D_{n_i} \to D_{n_{i+1}} $$
satisfy $\Ext^1_R(f_i,B) = 0$ for each $i<\omega$ and $B \in \mathcal B$. Let us start with $n_0 = 0$. For the inductive step, assume that $n_i$ has already been constructed. If there is some $l > n_i$ \st $\Ext^1_R(g_{l-1} \dots g_{n_i+1} g_{n_i},B) = 0$ for each $B \in \mathcal B$, we are done since we can put $n_{i+1} = l$. If this was not the case, there would be some $B_l \in \mathcal B$ for each $l > n_i$ \st $\Ext^1_R(g_{l-1} \dots g_{n_i+1} g_{n_i},B_l) \ne 0$. But this would imply that $(\Ext^1_R(D_n,\bigoplus_{l>n_i} B_l))_{n<\omega}$ is not T-nilpotent, a contradiction.

Finally, we can just put $C_i = D_{n_i}$ and observe using Lemma~\ref{lem:ext_zero_map} that $f_i \in \mathfrak I$ for each $i < \omega$.

$(2) \implies (1)$. This follows directly from Lemma~\ref{lem:ext_Tnil}, since the inverse system $(\Ext^1_R(C_n,B),\Ext^1_R(f_n,B))_{n<\omega}$ is clearly T-nilpotent for each $B \in \mathcal B$ (see Lemma~\ref{lem:ext_zero_map}).

$(2) \implies (3)$ is obvious.

$(3) \implies (1)$. For each $n$, write $f_n$ as a composition of the form $C_n \overset{u_n}\to A_n \overset{v_n}\to C_{n+1}$ with $A_n \in \mathcal A$. In this way, we get a direct system
$$
C_0 \overset{u_0}\to A_0 \overset{v_0}\to C_1 \overset{u_1}\to A_1 \overset{v_1}\to C_2 \overset{u_2}\to \dotsb .$$
Now, $\varinjlim_{n<\omega} C_n = \varinjlim_{n<\omega} A_n$. Hence $M \in \mathcal A$ since $\mathcal A$ is closed under countable direct limits.
\end{proof}

The preceding theorem allows us to characterize modules in the right-hand class of a cotorsion pair satisfying the assumptions of TCMC. Again, we state the following theorem for more general cotorsion pairs than those in question for TCMC. Note that for projective cotorsion pairs over self-injective artin algebras, the following statement is a consequence of~\cite[Corollary 7.7]{KS} and \cite[Theorem A]{K}.

\begin{thm} \label{thm:char_B}
Let $R$ be a right coherent ring and $(\mathcal A,\mathcal B)$ be a hereditary cotorsion pair in $\ModR$ with $\mathcal B$ closed under unions of well-ordered chains. Denote by $\mathfrak I$ the ideal of all morphisms in $\modR$ which factor through some module from $\mathcal A$. Then the following are equivalent:
\begin{enumerate}
\item $B \in \mathcal B$,
\item $\Ext^1_R(f,B) = 0$ for each $f \in \mathfrak I$.
\end{enumerate}
\end{thm}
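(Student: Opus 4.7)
The implication $(1) \Rightarrow (2)$ is immediate: any $f \in \mathfrak I$ factors as $X \to A \to Y$ with $A \in \mathcal A$, so $\Ext^1_R(f, B)$ factors through $\Ext^1_R(A, B) = 0$.

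For $(2) \Rightarrow (1)$, my plan is to reduce to the pure-injective case. Consider the class
\[ \mathcal B^{\#} = \{N \in \ModR \mid \Ext^1_R(f, N) = 0 \text{ for all } f \in \mathfrak I\}, \]
so that hypothesis $(2)$ says precisely $B \in \mathcal B^{\#}$. The first step is to verify that $\mathcal B^{\#}$ is definable. For each $f \colon X \to Y$ in $\modR$, right coherence of $R$ together with Lemma~\ref{lem:coherent} exhibit both $\Ext^1_R(X, -)$ and $\Ext^1_R(Y, -)$ as coherent functors (compute them as cokernels of $\Hom_R(-, -)$ applied to the inclusions of the first syzygies, which are themselves in $\modR$ by coherence). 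The natural transformation $\Ext^1_R(f, -)$ between these coherent functors has coherent image, and the condition $\Ext^1_R(f, N) = 0$ is exactly the vanishing of $\Img \Ext^1_R(f, -)$ at $N$. Intersecting over $f \in \mathfrak I$, $\mathcal B^{\#}$ is the vanishing locus of a family of coherent functors and is therefore definable; in particular it is closed under pure-injective hulls.

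Next, I would show that every pure-injective $E \in \mathcal B^{\#}$ lies in $\mathcal B$. By Theorem~\ref{thm:main}~(1), $(\mathcal A, \mathcal B)$ is of countable type, so $E \in \mathcal B$ reduces to verifying $\Ext^1_R(S, E) = 0$ for every countably presented $S \in \mathcal A$. Theorem~\ref{thm:char_A} expresses such an $S$ as $S = \varinjlim_n C_n$ with $C_n \in \modR$ and bonding maps $f_n \in \mathfrak I$. Since $E$ is pure-injective, $\Hom_R(-, E)$ is exact on the canonical pure-exact presentation $0 \to \bigoplus C_n \to \bigoplus C_n \to S \to 0$, which forces $\varprojlim^1 \Hom_R(C_n, E) = 0$. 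Lemma~\ref{lem:gen_auslander} then gives $\Ext^1_R(S, E) \cong \varprojlim \Ext^1_R(C_n, E)$, and this inverse limit vanishes because the bonding maps $\Ext^1_R(f_n, E)$ are all zero by the hypothesis that $E \in \mathcal B^{\#}$.

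Finally, to conclude $B \in \mathcal B$: the pure-injective hull of $B$ lies in $\mathcal B^{\#}$ by the definability established above, hence in $\mathcal B$ by the previous step; since $\mathcal B$ is itself definable by Theorem~\ref{thm:main}~(3), it is closed under pure submodules, and so $B \in \mathcal B$. The crux of the argument, and in my view the main obstacle, is the passage to pure-injectives via the definability of $\mathcal B^{\#}$. This step sidesteps the otherwise delicate question of whether $(\Hom_R(C_n, B), \Hom_R(f_n, B))_n$ is Mittag--Leffler, a property which is not apparent from hypothesis $(2)$ but becomes automatic once $B$ is replaced by its pure-injective hull.
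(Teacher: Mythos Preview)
Your proposal is correct and follows essentially the same route as the paper's own proof: define the class $\mathcal B^{\#}$ (denoted $\mathcal C$ in the paper), observe that it is definable because $\Img\Ext^1_R(f,-)$ is coherent for $f$ between strongly finitely presented modules, reduce to pure-injective members, and then use the countable type of $(\mathcal A,\mathcal B)$ together with Theorem~\ref{thm:char_A} and Auslander's lemma (your Lemma~\ref{lem:gen_auslander}) to conclude. The only cosmetic difference is that the paper phrases the final step as ``definable classes are determined by their pure-injective members'' rather than passing explicitly through the pure-injective hull of $B$ and invoking Theorem~\ref{thm:main}~(3), but this is the same argument.
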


\begin{proof}
(1) $\implies$ (2). This is clear, since in this case, for each $f \in \mathfrak I$, the map $\Ext^1_R(f,B)$ factors through $\Ext^1_R(A,B)=0$ for some $A \in \mathcal A$.

(2) $\implies$ (1). Recall that the cotorsion pair is of countable type and complete by Theorem~\ref{thm:main}. Moreover, every countably presented module in $\mathcal A$ can be expressed as a direct limit of a direct system $(C_n,f_n)_{n<\omega}$ with all the morphisms $f_n$ in $\mathfrak I$ by Theorem~\ref{thm:char_A}.

Let us define a class of modules $\mathcal C$ as
$$
\mathcal C = \{M \in \ModR \mid \Ext^1_R(f,M) = 0 \textrm{ for each } f \in \mathfrak I \}
$$
%
By definition $\mathcal B \subseteq \mathcal C$.

Note that since every $f \in \mathfrak I$ is a morphism between strongly finitely presented modules, say $f: X \to Y$, and it is not difficult to see that the functors $\Ext^1_R(X,-)$ and $\Ext^1_R(Y,-)$ are coherent in this case, so is the functor $F_f = \Img \Ext^1_R(f,-)$. Hence $\mathcal C$ is a definable class as it is defined by vanishing of the functors $F_f$ where $f$ runs through a representative set of morphisms from $\mathfrak I$. In particular, this means that showing $\mathcal C \subseteq \mathcal B$ reduces just to showing that every \emph{pure-injective} module $M \in \mathcal C$ is already in $\mathcal B$, since definable classes are determined by the pure-injective modules they contain.

To this end, assume that $M \in \mathcal C$ is pure-injective and $A \in \mathcal A$ is countably presented. Then $A = \varinjlim C_n$ where $(C_n,f_n)_{n<\omega}$ is a direct system \st $f_n \in \mathfrak I$ for each $n$. In particular, $\Ext^1_R(f_n,M) = 0$ by assumption and
$$ \Ext^1_R(A,M) = \Ext^1_R(\varinjlim C_n,M) \cong \varprojlim \Ext^1_R(C_n,M) = 0 $$
by Auslander's lemma. Finally, since $(\mathcal A,\mathcal B)$ is of countable type and $A$ was arbitrary, it follows that $M \in \mathcal B$.
\end{proof}

\begin{rem} $(i)$ Countable type of the cotorsion pair considered in Theorem~\ref{thm:char_B} together with Lemma~\ref{lem:hill} imply that when defining $\mathfrak I$, we may assume that the modules from $\mathcal A$ through which the maps $f \in \mathfrak I$ are required to factorize are all countably presented.

\smallskip\noindent
$(ii)$ To determine which implication of pp-formulas corresponds to the coherent functor $F_f$ from the proof of Theorem~\ref{thm:char_B}, we build the following commutative diagram
$$
\begin{CD}
0	@>>>	K	@>{i_X}>>	F_X	@>{p_X}>>	X	@>>>	0	\\
@.			@VV{i}V			@VV{s}V			@VV{f}V			\\
0	@>>>	L	@>{i_Y}>>	F_Y	@>{p_Y}>>	Y	@>>>	0
\end{CD}
$$

\smallskip
\noindent
with $F_X, F_Y$ finitely generated free, $K, L$ finitely presented, $s$ a split embedding and $i, i_X, i_Y$ inclusions. Now, an equivalent statement to $F_f(M) = 0$ is that every homomorphism from $K$ into $M$ which extends to $L$ must extend to $F_X$ as well, and this can be routinely translated to an implication between two pp-formulas to be satisfied in $M$. If we denote by $H$ the pushout of $i$ and $i_X$, and by $h$ the pushout map $L \to H$, then the latter actually means that $\Coker \Hom_R(h,M) = 0$. Thus, $\Coker \Hom _R(h,-)$ is a coherent functor which may be equivalently used instead of $F_f$ when defining $\mathcal B$.
\end{rem}


\section{Direct limits and pure-epimorphic images}
\label{sec:dirlim}

In the cases when TCMC holds true, the class $\mathcal A$ of any cotorsion pair $(\mathcal A, \mathcal B)$ meeting its assumptions must be closed under pure-epimorphic images. Indeed, in this setting, we have $\mathcal A = \varinjlim (\mathcal A\cap\hbox{mod-}R)$ and the latter class is closed under pure-epimorphic images by the well-known result of Lenzing (cf.~\cite{L} or~\cite[Lemma 1.2.9]{GT}). In this section, we prove that the hypotheses of TCMC do always imply that $\mathcal A$ is closed under pure-epimorphic images. As a consequence, we prove that every complete cotorsion pair with both classes closed under arbitrary direct limits is cogenerated by a single pure-injective module---this can be viewed as a module-theoretic counterpart of~\cite[Theorem C]{K}.

Note that the first part---to make sure that $\mathcal A$ is closed under pure-epimorphic images---is the crucial one. For projective cotorsion pairs over self-injective algebras which satisfy the hypotheses of TCMC, this property follows by analysis of the proofs in~\cite{K} and~\cite{KS}. But when proving this in a more general setting, one obstacle appears. Namely, complete cotorsion pairs provide us with approximations (special precovers and preenvelopes) which are not functorial in general. Therefore, implementing the rather simple underlying idea---expressing each module in $\mathcal A$ in terms of direct limits of $\mathcal A$-precovers of finitely presented modules and proving that this transfers to pure-epimorphic images---requires several technical steps. In particular, we need special indexing sets for our direct systems which we call \emph{inverse trees}.

\smallskip

We start with a preparatory lemma. Recall that for an ordinal number $\alpha$, we denote by $|\alpha|$ the cardinality of $\alpha$ when viewed as the set of all smaller ordinals.

\begin{defn} \label{defn:pred}
A direct system $(M_i, f_{ji}\mid i,j\in I\;\&\; i\leq j)$ of $R$-modules is said to be \emph{continuous} if $(M_k, f_{kj}\mid j \in J)$ is the direct limit of the system $(M_i, f_{ji}\mid i,j\in J\;\&\; i\leq j)$ whenever $J$ is a directed subposet of $I$ and $k$ is a supremum of $J$ in $I$.
\end{defn}

\begin{lem} \label{lem:well-chain}
Let $\kappa$ be an infinite cardinal and $M$ be a $\kappa$-presented module. Then $M$ is a direct limit of a continuous well-ordered system $(M_\alpha, f_{\beta\alpha} \mid \alpha\leq\beta<\kappa)$ \st for all $\alpha<\kappa$, $M_\alpha$ is $|\alpha|$-presented.
\end{lem}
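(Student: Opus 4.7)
The plan is to build the chain explicitly from a free presentation of $M$, using transfinite recursion that adds one new generator and one new relation at each successor stage and takes unions at limit stages. Fix a presentation $R^{(X)} \overset{f}\to R^{(Y)} \to M \to 0$ with $|X|, |Y| \leq \kappa$ and choose enumerations $X = \{x_\gamma \mid \gamma < \kappa\}$, $Y = \{y_\gamma \mid \gamma < \kappa\}$ (allowing repetition). Define $X_\alpha \subseteq X$ and $Y_\alpha \subseteq Y$ recursively by $X_0 = Y_0 = \varnothing$; at a successor $\alpha+1$, set $X_{\alpha+1} = X_\alpha \cup \{x_\alpha\}$ and $Y_{\alpha+1} = Y_\alpha \cup \{y_\alpha\} \cup \supp f(x_\alpha)$; at a limit ordinal $\alpha$, set $X_\alpha = \bigcup_{\beta<\alpha}X_\beta$ and $Y_\alpha = \bigcup_{\beta<\alpha}Y_\beta$. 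Since each $f(x_\alpha)$ has finite support in $Y$, a routine induction yields $|X_\alpha|, |Y_\alpha| \leq |\alpha|+\aleph_0$, and by construction $f\bigl[R^{(X_\alpha)}\bigr] \subseteq R^{(Y_\alpha)}$.

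Having done this, I would define $M_\alpha = \Coker(f \restriction R^{(X_\alpha)} : R^{(X_\alpha)} \to R^{(Y_\alpha)})$, with the transition maps $f_{\beta\alpha} : M_\alpha \to M_\beta$ induced by the natural inclusions. Each $M_\alpha$ is then $|\alpha|$-presented in the expected sense---finitely presented when $\alpha$ is finite, and with at most $|\alpha|$ generators and relations when $\alpha$ is infinite. Since $\bigcup_{\alpha<\kappa} X_\alpha = X$ and $\bigcup_{\alpha<\kappa} Y_\alpha = Y$, we obtain $\varinjlim_{\alpha<\kappa} M_\alpha = \Coker f = M$.

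The one point that requires care is continuity in the sense of Definition~\ref{defn:pred}, i.e.\ that $(M_k, f_{kj}\mid j \in J)$ is the direct limit of $(M_i, f_{ji}\mid i,j\in J\;\&\; i\leq j)$ whenever $J$ is a directed subposet of $\{\alpha \mid \alpha<\kappa\}$ whose supremum $k$ lies in the chain. If $k \in J$ the statement is trivial, so the only essential case is that in which $k$ is a limit ordinal and $J$ is cofinal in $k$. Continuity then reduces to $M_k = \varinjlim_{\beta<k} M_\beta$, which is immediate from $X_k = \bigcup_{\beta<k} X_\beta$ and $Y_k = \bigcup_{\beta<k} Y_\beta$ together with the facts that direct limit commutes with cokernels and that the functor sending a set $Z$ to the free module $R^{(Z)}$ commutes with directed unions. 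I expect this check at limit ordinals to be the only nontrivial step; the cardinality bookkeeping and the identification of the direct limit as $M$ are routine.
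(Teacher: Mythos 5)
Your proof is correct and follows essentially the same route as the paper's: both restrict a fixed free presentation of $M$ to a continuous increasing chain of sub-presentations indexed by $\kappa$ and take cokernels, with the transition maps induced by the inclusions. The only cosmetic difference is that the paper takes $Y_\alpha$ to be the initial segment $\{y_\gamma \mid \gamma<\alpha\}$ and $X_\alpha$ to be the set of all generators whose image lies in $\bigoplus_{\gamma<\alpha}y_\gamma R$, whereas you build both index sets recursively by adjoining supports; the continuity check at limit ordinals that you spell out is precisely the detail the paper dismisses as ``easy to see.''
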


\begin{proof}
We can start as in Lemma~\ref{lem:lambda_dirsys}. Let
$$ \bigoplus_{\beta<\kappa} x_\beta R \overset{g}\to \bigoplus_{\gamma<\kappa} y_\gamma R \to M \to 0 $$
be a free presentation of $M$. For each $\alpha < \kappa$, let $X_\alpha$ be the subset of all ordinals $\beta<\alpha$ \st $f(x_\beta) \in \bigoplus_{\gamma<\alpha} y_\gamma R$. If we define $M_\alpha$ as the cokernel of the restriction $\bigoplus_{\beta\in X_\alpha} x_\beta R \to \bigoplus_{\gamma<\alpha} y_\gamma R$ of $g$, it is easy to see that the direct system $(M_\alpha \mid \alpha<\kappa)$ together with the natural maps has the properties we require.
\end{proof}

For a set $X$, we will denote by $X^*$ the set of all finite strings over $X$, that is, all functions $u: n \to X$ for $n<\omega$. We will denote strings by letters $u,v,w,\dots$ and write them as sequences of elements of $X$, which we will denote by Greek letters for a reason which will be clear soon. For example, we write $u = \alpha_0\alpha_1\dots\alpha_{n-1}$. When $u,v$ are strings, we denote by $uv$ their \emph{concatenation}, we define the \emph{length} of a string $u$ in the usual way and denote it by $\ell(u)$, and we identify strings of length $1$ with elements in $X$. The empty string is denoted by $\varnothing$. Note that the set $X^*$ together with the concatenation operation is nothing else than the free monoid over $X$.

\begin{defn} \label{defn:inv_k-tree}
Let $\kappa$ be an infinite cardinal and $\kappa^*$ be the free monoid over $\kappa$. Let us equip $\kappa^*\setminus\{\varnothing\}$ with a partial order in the following way: If $u = \alpha_0\alpha_1\dots\alpha_{n-1}$ and $v = \beta_0\beta_1\dots\beta_{m-1}$, we put $u\leq v$ if
\begin{enumerate}
\item $n \geq m$,
\item $\alpha_0\alpha_1\dots\alpha_{m-2} = \beta_0\beta_1\dots\beta_{m-2}$, and
\item $\alpha_{m-1} \leq \beta_{m-1}$ as ordinal numbers.
\end{enumerate}
Then an \emph{inverse tree} over $\kappa$ is the subposet of $(\kappa^*\setminus\{\varnothing\}, \leq)$ defined as
$$
I_\kappa = \bigl\{\alpha_0\alpha_1\dots\alpha_{n-1}\;\bigl | \bigr.\;\bigl(\forall i \leq n-2\bigr)\bigl(\alpha_i \text{ is infinite, non-limit}\;\&\; \alpha_{i+1}<|\alpha_i|\bigr)\bigr\}.
$$

For convenience, given a non-empty string $u = \alpha_0\alpha_1\dots\alpha_{n-1} \in \kappa^*$, we define the \emph{tail} of $u$, denoted by $t(u)$, to be the last symbol $\alpha_{n-1}$ of $u$, and the \emph{rank} of $u$, $\rk(u)$, to be the cardinal number $|\alpha_{n-1}|$. Notice that in this terminology, the tail of a string $u \in I_\kappa$ is allowed to be a limit or finite ordinal.
\end{defn}

Having defined inverse trees, we can start collecting basic properties of the partial ordering:

\begin{lem} \label{lem:invt_proper}
Let $(I_\kappa, \leq)$ be an inverse tree, and let $v$ and $u = \beta_0\dots\beta_{m-2}\beta_{m-1}$ be two elements of $I_\kappa$ \st $v<u$. Then there is $w \in I_\kappa$ \st $v \leq w < u$ and one of the following cases holds true:
\begin{enumerate}
\item There is an ordinal $\gamma<\beta_{m-1}$ \st $w = \beta_0\beta_1\dots\beta_{m-2}\gamma$.
\item There is an ordinal $\gamma<|\beta_{m-1}|$ \st $w = \beta_0\beta_1\dots\beta_{m-2}\beta_{m-1}\gamma$.
\end{enumerate}
\end{lem}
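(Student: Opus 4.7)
The plan is a direct case analysis on \emph{how} strict inequality $v<u$ fails to be equality. Write $v = \alpha_0\alpha_1\dots\alpha_{n-1}$. From $v \leq u$ we already know that $n \geq m$, that $\alpha_i = \beta_i$ for $i \leq m-2$, and that $\alpha_{m-1} \leq \beta_{m-1}$; strictness then forces either (A)~$\alpha_{m-1}<\beta_{m-1}$, or (B)~$\alpha_{m-1}=\beta_{m-1}$ together with $n>m$. I would treat these two cases separately, producing the required $w$ of length $m$ in case~(A) and of length $m+1$ in case~(B).

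In case~(A), I would set $w := \beta_0\beta_1\dots\beta_{m-2}\alpha_{m-1}$, which has the same length as $u$ but a smaller last coordinate. Membership $w \in I_\kappa$ is inherited directly from $u$ because the ``infinite, non-limit'' constraints bite only on positions $0,\dots,m-3$, which are unchanged, while the tail position is unconstrained. Both $v \leq w$ and $w < u$ are immediate from the definition, so conclusion~(1) holds with $\gamma = \alpha_{m-1}$.

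In case~(B), I would set $w := \beta_0\beta_1\dots\beta_{m-1}\alpha_m$, extending $u$ by the next coordinate of $v$. Here the point that must be checked carefully is that $w \in I_\kappa$: one needs $\beta_{m-1}$ to be infinite and non-limit, and one needs $\alpha_m < |\beta_{m-1}|$. This is where I would use that $v \in I_\kappa$ and that the position index $m-1$ sits in the range $\leq n-2$ inside $v$ (because $n > m$); combined with $\alpha_{m-1}=\beta_{m-1}$, this gives exactly what is needed. Then $w < u$ (strict extension of $u$) and $v \leq w$ (agreement on the first $m$ coordinates and equality of the $(m+1)$-th) are straightforward, yielding conclusion~(2) with $\gamma = \alpha_m$.

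I do not expect any serious obstacle. The only subtle ingredient is the asymmetry in Definition~\ref{defn:inv_k-tree} between interior positions and the tail: the constraints are demanded only at positions $\leq n-2$, and this is exactly what permits Case~(B) to go through (using that $n > m$ places $\beta_{m-1}$ in the \emph{interior} of $v$, even though it is the tail of $u$). Case~(A) is routine and essentially formal.
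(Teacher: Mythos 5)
Your case analysis is correct and matches the paper's (one-line) proof, which likewise just verifies the claim directly from Definition~\ref{defn:inv_k-tree}; your Case~(B) isolates exactly the point the paper highlights, namely that conclusion~(2) forces $\beta_{m-1}$ to be infinite and non-limit, which you correctly extract from $v \in I_\kappa$ using $n>m$. One pedantic remark on Case~(A): the tail of $w$ is not entirely unconstrained, since for $m\geq 2$ one still needs $\alpha_{m-1}<|\beta_{m-2}|$, but this is immediate from $\alpha_{m-1}<\beta_{m-1}<|\beta_{m-2}|$, so nothing is lost.
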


\begin{proof}
This follows easily from the definition. Notice that $(2)$ can only hold if $\beta_{m-1}=t(u)$ is infinite and non-limit.
\end{proof}

As an immediate corollary, we will see that the properties of $u \in I_\kappa$ with respect to the ordering depend very much on the tail (and rank) of $u$:

\begin{cor} \label{cor:invt_tail}
Let $u = \alpha_0\dots\alpha_{n-2}\alpha_{n-1} \in I_\kappa$. Then the following hold in $(I_\kappa, \leq)$:
\begin{enumerate}
\item If $t(u) = 0$, then $u$ is a minimal element.
\item If $t(u)$ is non-zero finite, then $u$ has a unique immediate predecessor.
\item If $t(u)$ is an infinite non-limit ordinal, then $u = \sup \{u\gamma \mid \gamma<\rk(u)\}$.
\item If $t(u)$ is a limit ordinal, then $u = \sup \{ \alpha_0\dots\alpha_{n-2}\gamma \mid \gamma<t(u)\}$.
\end{enumerate}
\end{cor}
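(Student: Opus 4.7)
The plan is to leverage Lemma~\ref{lem:invt_proper} once more: for each case $(1)$--$(4)$ of the corollary, the nature of $t(u)$ selects (or excludes) which of the two possible forms of the intermediate element $w$ can occur.

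For $(1)$, if $t(u)=0$ then case $(1)$ of Lemma~\ref{lem:invt_proper} requires an ordinal $\gamma<0$, which is impossible, and case $(2)$ requires $t(u)$ to be infinite and non-limit, which $0$ is not. Hence no $v<u$ exists, so $u$ is minimal. For $(2)$, when $t(u)=\alpha_{n-1}$ is a non-zero finite ordinal, case $(2)$ still fails (tail not infinite), so for every $v<u$ the lemma delivers some $w=\alpha_0\dots\alpha_{n-2}\gamma$ with $\gamma<\alpha_{n-1}$, whence $w\leq w_0:=\alpha_0\dots\alpha_{n-2}(\alpha_{n-1}-1)$. Since $\alpha_{n-1}-1<|\alpha_{n-2}|$ (vacuously when $n=1$), the element $w_0$ belongs to $I_\kappa$, lies strictly below $u$, and dominates every $v<u$; it is therefore the unique immediate predecessor of $u$.

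For $(3)$ and $(4)$, I would verify the supremum property directly from the definition of the ordering rather than through Lemma~\ref{lem:invt_proper}. In $(3)$ with $S=\{u\gamma\mid\gamma<|\alpha_{n-1}|\}$, each $u\gamma$ lies in $I_\kappa$ precisely because $t(u)$ is infinite and non-limit, and $u\gamma<u$ is immediate from the definition. Given an arbitrary upper bound $v$ of $S$ in $I_\kappa$ with $\ell(v)=m$, the inequalities $u\gamma\leq v$ force $m\leq n+1$; the case $m=n+1$ is ruled out because it would force the last symbol of $v$ to be at least $|\alpha_{n-1}|$, contradicting $v\in I_\kappa$. For $m\leq n$ the relations defining $u\gamma\leq v$ depend only on the shared prefix $\alpha_0\dots\alpha_{n-1}$, so they coincide with the conditions for $u\leq v$. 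In $(4)$, with $T=\{\alpha_0\dots\alpha_{n-2}\gamma\mid\gamma<\alpha_{n-1}\}$, the same type of length comparison yields $\ell(v)\leq n$, and for $\ell(v)=n$ one uses that $\alpha_{n-1}$ is a limit ordinal to conclude that the last symbol of $v$ is $\geq \alpha_{n-1}$, hence $u\leq v$.

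The only potentially subtle point is the asymmetry between $(3)$ and $(4)$: in $(3)$ the set of predecessors is parameterized by $\gamma<|\alpha_{n-1}|=\rk(u)$ (one appends a new symbol whose value is limited by the rank), whereas in $(4)$ the parameter runs over $\gamma<\alpha_{n-1}$ itself (one replaces the tail, exploiting the limit property to recover $\alpha_{n-1}$ as supremum). No substantial obstacle is expected; the whole corollary is essentially bookkeeping with the definition of the partial order on $I_\kappa$.
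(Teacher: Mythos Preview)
Your proposal is correct and matches the paper's approach: the paper gives no explicit proof beyond labeling the statement an immediate corollary of Lemma~\ref{lem:invt_proper}, and your argument is precisely the routine unpacking this entails---using the lemma to handle $(1)$ and $(2)$, and verifying the supremum property in $(3)$ and $(4)$ directly from the definition of the order on $I_\kappa$.
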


We have seen that an element $u \in I_\kappa$ can be expressed as a supremum of a chain of strictly smaller elements \iff $\rk(u)$ is infinite. If so, this chain depends on whether $t(u)$ is a limit ordinal or not. We will prove in the next lemma that as far as we are concerned with continuous direct systems indexed with $I_\kappa$, this expression of $u$ as a supremum is essentially unique.

\begin{lem} \label{lem:invt_sup}
Let $u \in I_\kappa$ be of infinite rank and $C$ be the chain as in Corollary~\ref{cor:invt_tail} $(3)$ or $(4)$ \st $u = \sup C$ in $I_\kappa$. Let $J \subseteq I_\kappa$ be a directed subposet of $I_\kappa$ \st $u = \sup J$ in $I_\kappa$ and $u \not\in J$. Then $C \cap J$ is cofinal in $J$.
\end{lem}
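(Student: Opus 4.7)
The plan is to fix $v \in J$ and, arguing by contradiction, produce $c \in C \cap J$ with $v \leq c$ by exploiting the tension between $\sup J = u$ and the hypothetical absence of such a $c$. As a preliminary I observe that the set $J_v := \{w \in J \mid v \leq w\}$ is directed (by directedness of $J$) and satisfies $\sup J_v = u$ in $I_\kappa$: indeed, given any upper bound $u' < u$ for $J_v$, the hypothesis $\sup J = u$ supplies some $v' \in J$ with $v' \not\leq u'$, and a common upper bound $w \in J$ of $v$ and $v'$ then lies in $J_v$ and satisfies $w \not\leq u'$, a contradiction. Throughout I write $u = \alpha_0\dots\alpha_{n-2}\alpha_{n-1}$; every $w \in J$ has $\ell(w) \geq n$ and agrees with $u$ on the first $n-1$ coordinates.

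In case $(4)$, where $t(u) = \alpha_{n-1}$ is a limit ordinal, the definition of $I_\kappa$ forbids strict extensions of $u$, so every $w \in J$ has $w_{n-1} < \alpha_{n-1}$. If $\ell(v) = n$, then $v$ itself already belongs to $C \cap J$. Otherwise I consider the truncation $\hat v := \alpha_0\dots\alpha_{n-2}v_{n-1}$, which belongs to $C$. Any $w \in J_v$ with $\ell(w) > n$ must have $w_{n-1} = v_{n-1}$, since $v \leq w$ forces the first $n$ coordinates of $v$ and $w$ to coincide; consequently $w \leq \hat v$ for every such $w$. If every element of $J_v$ had length $> n$, then $\hat v < u$ would bound $J_v$, contradicting $\sup J_v = u$; so some $w \in J_v$ has length exactly $n$, and since $w \neq u$ it lies in $C \cap J$ and dominates $v$.

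Case $(3)$, where $t(u) = \alpha_{n-1}$ is infinite non-limit and $C = \{u\gamma \mid \gamma < |\alpha_{n-1}|\}$, requires an extra reduction because only strict extensions of $u$ can be dominated by an element of $C$. The main obstacle is thus to prove that every element of $J$ actually extends $u$. Writing $\alpha_{n-1} = \eta + 1$, I set $\tilde u := \alpha_0\dots\alpha_{n-2}\eta$, which is $<u$; since $\tilde u$ cannot bound $J$, there is $w \in J$ with $w \not\leq \tilde u$, and a short calculation forces $w_{n-1} = \alpha_{n-1}$ and (as $u \notin J$) $\ell(w) > n$. If some hypothetical $v' \in J$ had $v'_{n-1} < \alpha_{n-1}$, a common upper bound $y \in J$ of $v'$ and $w$ would, after case analysis on $\ell(y) \in \{n, n+1, \dots\}$ combined with $y \leq u$, have to coincide with $u$ -- impossible.

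With this reduction in hand I conclude by running the case $(4)$ argument one level deeper. Our $v \in J$ now extends $u$, so I consider $\hat v' := uv_n \in C$: if every $w \in J_v$ had $\ell(w) > n+1$, the equality $w_n = v_n$ forced by $v \leq w$ would give $w \leq \hat v'$ for every $w \in J_v$, again contradicting $\sup J_v = u$. Hence some $w \in J_v$ has length $n+1$, and such a $w$ is automatically of the form $u w_n$ with $v_n \leq w_n < |\alpha_{n-1}|$, so $w \in C \cap J$ and $v \leq w$.
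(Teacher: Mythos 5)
Your argument is correct, but it takes a genuinely different route from the paper's. You work pointwise: for each $v\in J$ you show that a suitable truncation of $v$ (an element of $C$ lying strictly below $u$) would bound the upper set $J_v$ unless some member of $J_v$ has the critical length ($n$ in case (4), $n+1$ in case (3)), and any such member automatically lies in $C$; in case (3) this needs your extra reduction, via the element $\tilde u=\alpha_0\dots\alpha_{n-2}\eta$ and a common upper bound in $J$, that every element of $J$ strictly extends $u$. The paper's proof is shorter: it picks a single $j\in J$ of minimal length, notes that $\uparrow\! j$ is cofinal in $J$ and, by minimality of $\ell(j)$, consists entirely of strings of that same length with a common prefix --- hence is a chain whose supremum is $u$ --- and then reads off directly from the shape of such suprema that $\uparrow\! j\subseteq C$, treating cases (3) and (4) uniformly. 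Your approach buys a very explicit, element-by-element verification at the cost of the case-(3) reduction, which the paper's choice of a minimal-length $j$ renders unnecessary. Two cosmetic points: what you actually establish and use is that no $u'<u$ bounds $J_v$ (upgrading this to $\sup J_v=u$ in the strict sense also uses that $\uparrow\! v$ is totally ordered, Lemma~\ref{lem:invt_dir}(1)); and in the case-(3) reduction the subcase $\ell(y)>n$ yields a direct contradiction ($y_{n-1}$ would have to equal both $v'_{n-1}$ and $\alpha_{n-1}$) rather than forcing $y=u$ --- neither affects correctness.
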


\begin{proof}
Choose some $j\in J$ of the least possible length. Since $J$ is directed, $u$ is the supremum of the upper set $\uparrow\! j = \{i\in J\mid i\geq j\}$, too. By the definition of the ordering and the fact that $j$ has been taken of the least possible length, we see that each $i\in (\uparrow\! j)$ is of the form $\beta_0\beta_1\dots\beta_{m-2}\gamma_i$ where $\beta_0, \beta_1,\dotsc, \beta_{m-2}$ are fixed and $\gamma_i < |\beta_{m-2}|$. Thus $u = \beta_0\beta_1\dots\beta_{m-2}$ provided that $\sup\{\gamma _i\mid i\in (\uparrow\! j)\} = |\beta_{m-2}|$ (case $(3)$), and $u = \beta_0\beta_1\dots\beta_{m-2}\beta_{m-1}$ if $\beta_{m-1} = \sup\{\gamma _i\mid i\in (\uparrow\! j)\} < |\beta_{m-2}|$ (case $(4)$). Hence, $\uparrow\! j \subseteq C \cap J$ by assumption, and $C \cap J$ is cofinal in $J$ since $\uparrow\! j$ is.
\end{proof}

So far, we have studied elements strictly smaller than a given $u \in I_\kappa$. But, we will also need to look ``upwards'':

\begin{lem} \label{lem:invt_dir}
Let $(I_\kappa, \leq)$ be an inverse tree. Then
\begin{enumerate}
\item For each $u\in I_\kappa$, the upper set $\uparrow\! u = \{w \in I_\kappa \mid w \geq u\}$ is well-ordered.
\item $(I_\kappa, \leq)$ is directed.
\item Every non-empty bounded subset $X \subseteq I_\kappa$ has a supremum in $I_\kappa$.
\end{enumerate}
\end{lem}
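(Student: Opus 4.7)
The plan is to prove the three statements in order by direct combinatorial analysis of the ordering on strings. For (1), fix $u=\alpha_0\cdots\alpha_{n-1}\in I_\kappa$ and note that any $v\in\;\uparrow\! u$ has the shape $v=\alpha_0\cdots\alpha_{m-2}\gamma$ with $1\leq m\leq n$ and $\gamma\geq\alpha_{m-1}$. That $\uparrow\! u$ is totally ordered then follows from a short case split: if two such elements have different lengths, the longer one is the smaller in the defined order (since the shorter is obtained by truncation with possibly larger tail), and if they have equal length the comparison reduces to comparing the tails as ordinals. Well-foundedness of $\uparrow\! u$ is then immediate: in any non-empty $S\subseteq\;\uparrow\! u$, the elements of maximum length (bounded by $n$) form a non-empty subset, and among them we choose the one with minimum tail (tails form a non-empty set of ordinals).

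For (2), given $u=\alpha_0\cdots\alpha_{n-1}$ and $v=\beta_0\cdots\beta_{m-1}$, let $p$ be the largest integer with $\alpha_i=\beta_i$ for all $i<p$. If $p=\min(n,m)$ then one of $u,v$ extends the other, and the shorter one is already a common upper bound. Otherwise $p<\min(n,m)$ and I would take $w=\alpha_0\cdots\alpha_{p-1}\gamma$ with $\gamma=\max(\alpha_p,\beta_p)$; the comparisons $u\leq w$ and $v\leq w$ are then immediate from the definition. The critical verification is that $w\in I_\kappa$: since $p\leq n-1$, the entries $\alpha_0,\ldots,\alpha_{p-1}$ sit at interior positions of $u$, hence are infinite and non-limit; and $\gamma<|\alpha_{p-1}|$ holds because both $\alpha_p<|\alpha_{p-1}|$ and $\beta_p<|\beta_{p-1}|=|\alpha_{p-1}|$ are inherited from $u,v\in I_\kappa$.

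For (3), if $X\subseteq I_\kappa$ is non-empty and bounded above, choose any $x_0\in X$; every upper bound of $X$ automatically dominates $x_0$, so the non-empty set $U$ of upper bounds is contained in $\uparrow\! x_0$. By (1), the well-ordered set $\uparrow\! x_0$ has a least element in $U$, giving the desired supremum of $X$. The main obstacle throughout is the bookkeeping of the defining conditions of $I_\kappa$---every non-tail symbol must be an infinite non-limit ordinal and each symbol must satisfy $\alpha_{i+1}<|\alpha_i|$---and the most delicate step is (2), where a genuinely new string is constructed and its new tail must respect the cardinality bound; this works precisely because both $\alpha_p$ and $\beta_p$ satisfy it thanks to the equality $\beta_{p-1}=\alpha_{p-1}$.
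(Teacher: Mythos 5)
Your proof is correct and follows essentially the same route as the paper's: part (1) via the same length-then-tail analysis of $\uparrow\! u$ (longer strings are smaller, equal lengths compare by tails, and the least element of a subset is the longest string with minimal tail), and part (3) by locating the least element of the set of upper bounds inside the well-ordered set $\uparrow\! x_0$. The only (harmless) difference is in (2), where the paper simply takes $\max\{\alpha_0,\beta_0\}$ viewed as a string of length one, whereas you construct the finer upper bound from the longest common prefix; your verification that this string lies in $I_\kappa$ is correct, so both arguments work.
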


\begin{proof}
(1). It follows from the definition that $\uparrow\! u$ is a totally ordered subset of $I_\kappa$. If $X \subseteq (\uparrow\! u)$ is nonempty, then the longest string $u \in X$ with the minimum tail $t(u)$ is the least element in $X$. Hence, $\uparrow\! u$ is well-ordered.

(2). Let $u = \alpha_1 \dots \alpha_{n-1}$, $v = \beta_1 \dots \beta_{m-1}$ be elements in $I_\kappa$. Then $\max \{\alpha_1, \beta_1\}$, viewed as a string of length $1$, is greater than both $u$ and $v$.

(3). Suppose $X \subseteq I_\kappa$ is non-empty and has an upper bound $u \in I_\kappa$. In other words, $u \in Y$ for $Y =  \bigcap_{w \in X} (\uparrow\! w)$. But since for any $v \in X$ clearly $Y \subseteq (\uparrow\! v)$, there must be the least element in $Y$, which is by definition the supremum of $X$.
\end{proof}

In view of the preceding lemma, we can introduce the following definition:

\begin{defn} \label{defn:succ}
Let $(I_\kappa, \leq)$ be an inverse tree and $u = \alpha_0 \dots \alpha_{n-2}\alpha_{n-1} \in I_\kappa$. Then the \emph{successor} of $u$ in $I_\kappa$ is defined as $\textrm{s}(u) = \alpha_0 \dots \alpha_{n-2}\beta$ where $\beta = \alpha+1$ is the ordinal successor of $\alpha$. Similarly, if $t(u) = \alpha_{n-1}$ is non-limit and non-zero, we define the \emph{predecessor} of $u$ as $\textrm{p}(u) = \alpha_0 \dots \alpha_{n-2}\gamma$ where $\gamma = \alpha-1$ is the ordinal predecessor of $\alpha$.
\end{defn}

Note that by Lemma~\ref{lem:invt_dir}, $\textrm{s}(u)$ is the unique immediate successor of $u$ in $(I_\kappa, \leq)$. On the other hand, even if $\textrm{p}(u)$ is defined, there still may be other elements in $I_\kappa$ less than $u$ that are incomparable with $\textrm{p}(u)$---see Lemma~\ref{lem:invt_proper}. We can summarize our observations in a figure showing ``neighbourhoods'' of elements $u \in I_\kappa$ depending on $t(u)$, where $w \in \kappa^*$ is the string obtained from $u$ by removing its last symbol:

\bigskip\noindent
\begin{tabular}{|c|c|}
\hline
$t(u)$ infinite and non-limit
&
$t(u)$ limit
\\ \hline
$$
\xymatrix{
& \textrm{p}(u) \ar[r] & u \ar[r] & \textrm{s}(u) \\
u\gamma \ar[r] & u(\gamma+1) \ar@/_1pc/@{.>}[ur]
}
$$
&
$$
\xymatrix{
w\gamma \ar[r] & w(\gamma+1) \ar@{.>}[r] & u \ar[r] & \textrm{s}(u)
}
$$
\\ \hline
\end{tabular}
\smallskip

This picture also shows the motivation for calling $(I_\kappa, \leq)$ an inverse tree. From each $u \in I_\kappa$, there is exactly one possible way towards greater elements, while when traveling in $I_\kappa$ down the ordering, there are many branches. The rank zero elements of $I_\kappa$ can be viewed as leaves. Just the root is missing---it is easy to see that $I_\kappa$ has no maximal element.

Next, we will turn our attention back to modules. We shall see that each infinitely presented module is the direct limit of a special direct system indexed by an inverse tree.

\begin{lem} \label{lem:inv_k-tree}
Let $\kappa$ be an infinite cardinal and $M$ be a $\kappa$-presented module. Then $M$ is the direct limit of a continuous direct system $(M_u, f_{vu} \mid u,v\in I_\kappa\;\&\; u\leq v)$ indexed by the inverse tree $I_\kappa$ and \st $M_u$ is $\rk(u)$-presented for each $u\in I_\kappa$.
\end{lem}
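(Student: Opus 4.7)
The plan is to proceed by transfinite induction on the infinite cardinal $\kappa$. The base case $\kappa = \aleph_0$ is almost immediate: since no ordinal $<\omega$ is infinite non-limit, $I_{\aleph_0}$ reduces to the set of length-one strings and is order-isomorphic to $\omega$ with the ordinal order, so the system provided by Lemma~\ref{lem:well-chain} is already of the required form (note that $\rk(\alpha)=|\alpha|$ on length-one strings).

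For the inductive step, I would first invoke Lemma~\ref{lem:well-chain} to obtain a continuous well-ordered system $(N_\alpha, g_{\beta\alpha} \mid \alpha \leq \beta < \kappa)$ with $\varinjlim N_\alpha = M$ and $N_\alpha$ being $|\alpha|$-presented. For each $\alpha<\kappa$ that is \emph{infinite and non-limit}, the cardinal $\lambda := |\alpha|$ is strictly smaller than $\kappa$, so the inductive hypothesis applied to $N_\alpha$ provides a continuous $I_\lambda$-indexed system $(N^{(\alpha)}_w \mid w \in I_\lambda)$ with limit $N_\alpha$ and $N^{(\alpha)}_w$ being $\rk(w)$-presented. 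The key combinatorial observation, read off from Definition~\ref{defn:inv_k-tree}, is that the map $w\mapsto \alpha w$ is an order-embedding of $I_\lambda$ into $I_\kappa$ whose image equals $\{u<\alpha \mid u\text{ starts with }\alpha\}$ and whose supremum in $I_\kappa$ is $\alpha$ itself (by Corollary~\ref{cor:invt_tail}(3)).

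I would then assemble the desired $I_\kappa$-system by setting $M_\alpha := N_\alpha$ on length-one strings and $M_{\alpha v} := N^{(\alpha)}_v$ when $u = \alpha v \in I_\kappa$ has non-empty tail $v \in I_{|\alpha|}$; the presentation index matches automatically because $\rk_{I_\kappa}(\alpha v)=|t(v)|=\rk_{I_{|\alpha|}}(v)$. Connecting maps are defined piecewise following the case analysis of Lemma~\ref{lem:invt_proper}: within a single branch $\{\alpha\}\cup \alpha I_{|\alpha|}$, take the maps of the $N^{(\alpha)}$-system supplemented by the canonical maps $N^{(\alpha)}_v\to N_\alpha$ into the direct limit; when crossing branches, i.e.\ for $u=\alpha v$ with $u\leq \alpha'$ and $\alpha<\alpha'$, compose the intra-branch map landing in $N_\alpha$ with the chain map $g_{\alpha'\alpha}$. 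Functoriality is a direct check.

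The main obstacle is verifying continuity of the assembled system. If $k\in I_\kappa$ has finite tail, then Lemma~\ref{lem:invt_proper} shows every directed $J\subseteq I_\kappa$ with $\sup J = k$ must contain $k$ itself, so the claim is trivial. For $k$ of infinite rank, Lemma~\ref{lem:invt_sup} reduces the check to a single canonical chain cofinal in $J$: $\{k\gamma \mid \gamma < \rk(k)\}$ when $t(k)$ is infinite non-limit, and $\{\alpha_0\dots\alpha_{n-2}\gamma \mid \gamma < t(k)\}$ when $t(k)$ is a limit ordinal. In both sub-cases, the verification either reduces to continuity of the outer chain $(N_\alpha)_{\alpha<\kappa}$ (when $k$ has length one with limit tail) or, after translating via the identification $\alpha_0 w\leftrightarrow w$, to continuity of the sub-system $N^{(\alpha_0)}$ indexed by $I_{|\alpha_0|}$, granted by the inductive hypothesis; here one also exploits the fact that length-one strings are cofinal in any $I_\lambda$, so direct limits over $I_\lambda$ are computed along the chain of length-one strings. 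The same cofinality remark applied to $I_\kappa$ shows that the direct limit of the assembled system equals $\varinjlim_{\alpha<\kappa}N_\alpha=M$, closing the induction.
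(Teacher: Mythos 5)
Your argument is correct, and the system you construct is in substance the same one as in the paper: both proofs bootstrap repeatedly from Lemma~\ref{lem:well-chain} and both settle continuity by appealing to Lemma~\ref{lem:invt_sup}. The organization is genuinely different, however. The paper runs the induction on the length $\ell(u)$ of strings: Lemma~\ref{lem:well-chain} is applied once to $M$ to produce the length-one level, and then to each already-constructed $M_v$ with $t(v)$ infinite and non-limit to produce the next level, with no appeal to smaller inverse trees. You instead induct on the cardinal $\kappa$ and exploit the self-similarity of $I_\kappa$: the set of strings of length at least two beginning with an infinite non-limit $\alpha$ is a downward-closed, order-isomorphic copy of $I_{|\alpha|}$ with supremum $\alpha$ in $I_\kappa$, so the whole subtree over $N_\alpha$ is delivered by the inductive hypothesis applied to the $|\alpha|$-presented module $N_\alpha$. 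What this buys you is that continuity at all strings of length at least two is inherited wholesale from the inductive hypothesis rather than re-verified; the small price is the (easy) check that the embedding $w\mapsto\alpha w$ preserves suprema of directed subsets, which holds because its image is downward closed and every upper bound lying outside the branch dominates the entire branch. Your treatment of the remaining points (the finite-tail case via Lemma~\ref{lem:invt_proper}, cofinality of the length-one strings in $I_\kappa$, functoriality of the glued maps) is sound and is at the same level of detail that the paper leaves implicit in its closing sentence.
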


\begin{proof} We will construct the direct system by induction on $\ell(u)$ using Lemma~\ref{lem:well-chain}. If $\ell(u) = 1$, then $u$ can be viewed as an ordinal number $<\kappa$ and we just use the modules $M_u$ and morphisms $f_{vu}$ obtained for $M$ by Lemma~\ref{lem:well-chain}.

Suppose we have defined $M_u$ and $f_{vu}$ for all $u,v \in I_\kappa$ with $\ell(u), \ell(v)\leq n$. Let $v\in I_\kappa$ be arbitrary with $\ell(v) = n$ and such that $t(v)$ is infinite and non-limit. Then by using Lemma~\ref{lem:well-chain} for $M_v$, we obtain a well-ordered continuous system $(M^v_\alpha, f^v_{\beta\alpha} \mid \alpha\leq\beta<\rk(v))$, and we set $M_{v\alpha} = M^v_\alpha$ and $f_{v\beta,v\alpha} = f^v_{\beta\alpha}$ for all $\alpha\leq\beta<\rk(v)$. Finally, the morphisms $f_{v,v\alpha}$, $\alpha<\rk(v)$, will be defined as the colimit maps $M^v_\alpha \to M_v$, and the rest of the morphisms $f_{u,v\alpha}$ just by taking the appropriate compositions.

The correctness of this construction is ensured by the properties of $I_\kappa$ proved above, and the fact that $(M_u \mid u \in I_\kappa)$ is continuous is taken care of by Lemma~\ref{lem:invt_sup}.
\end{proof}

The crucial fact about inverse trees is that, under the assumptions of TCMC, they allow us to construct for each module a continuous direct system of special precovers:

\begin{lem} \label{lem:precovers}
Let $(\mathcal A, \mathcal B)$ be a complete cotorsion pair with both classes closed under direct limits, $\kappa$ be an infinite cardinal, and $M$ be a $\kappa$-presented module. Then there is a continuous direct system of short exact sequences $0 \to B_u \overset{\iota_u}\to A_u \overset{\pi_u}\to M_u \to 0$ indexed by $I_\kappa$ \st $B_u \in \mathcal B$, $A_u \in \mathcal A$, $M_u$ is $\rk(u)$-presented for each $u \in I_\kappa$, and $M$ is the direct limit of the modules $M_u$.
\end{lem}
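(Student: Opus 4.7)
The plan is to first invoke Lemma~\ref{lem:inv_k-tree} to obtain a continuous direct system $(M_u, f_{vu})$ of $\rk(u)$-presented modules indexed by $I_\kappa$ with $M = \varinjlim_u M_u$, and then to construct $A_u \in \mathcal A$, $B_u \in \mathcal B$ together with short exact sequences $0 \to B_u \to A_u \to M_u \to 0$ by transfinite recursion following the tree structure of $I_\kappa$ described in Corollary~\ref{cor:invt_tail}.

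There are four cases according to the tail $t(u)$. If $t(u)=0$, then $M_u=0$ and we set $A_u = B_u = 0$. If $t(u)$ is a finite non-zero ordinal (so $u$ has an immediate predecessor $p(u)$ but is not a supremum), I would take a special $\mathcal A$-precover $0\to B_u\to A_u\to M_u\to 0$ afforded by completeness of the cotorsion pair, and lift the composite $A_{p(u)}\to M_{p(u)}\to M_u$ along $A_u \twoheadrightarrow M_u$ to obtain the transition map $A_{p(u)}\to A_u$; this is possible since $\Ext^1_R(A_{p(u)},B_u)=0$. If $t(u)$ is a limit ordinal, Corollary~\ref{cor:invt_tail}(4) expresses $u$ as the supremum of the chain $C_u=\{\alpha_0\ldots\alpha_{n-2}\gamma\mid\gamma<t(u)\}$, and we set $A_u$ and $B_u$ to be the direct limits over $C_u$ of the previously constructed sequences. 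Exactness of direct limits yields the required SES, and closure of $\mathcal A$ and $\mathcal B$ under direct limits ensures $A_u\in\mathcal A$, $B_u\in\mathcal B$. Finally, if $t(u)$ is infinite non-limit, Corollary~\ref{cor:invt_tail}(3) gives $u=\sup\{u\gamma\mid\gamma<\rk(u)\}$ with $\rk(u\gamma)=|\gamma|<\rk(u)$; we define $A_u$ and $B_u$ as the direct limits over this chain, and additionally lift $A_{p(u)}\to M_u$ to $A_{p(u)}\to A_u$ exactly as in the finite successor case.

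Well-foundedness of the recursion is established by assigning to each $u\in I_\kappa$ an ordinal $\rho(u)$ mirroring the case analysis: $\rho(u)=0$ when $t(u)=0$; $\rho(u)=\rho(p(u))+1$ in the finite successor case; $\rho(u)=\sup\{\rho(v)+1:v\in C_u\}$ in the limit case; and $\rho(u)=\max\bigl(\rho(p(u))+1,\sup\{\rho(v)+1:v\in C_u\}\bigr)$ in the infinite non-limit case. A routine verification by induction on the cardinal $\rk(u)$ shows that $\rho$ is well-defined. Continuity of the resulting direct system at a point $k$ that is the supremum of a directed subposet $J\subseteq I_\kappa$ with $k\notin J$ is guaranteed by Lemma~\ref{lem:invt_sup}: $C_k\cap J$ is cofinal in $J$ and, since $C_k$ is totally ordered, also cofinal in $C_k$, so $\varinjlim_J A_j=\varinjlim_{C_k}A_c=A_k$ by construction, and similarly for the $B$-side.

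The principal obstacle I anticipate is coherence of the transition maps across the tree: liftings along surjections with $\Ext^1$-vanishing kernel are not canonical, so consistent choices are needed so that $f_{wv}\circ f_{vu}=f_{wu}$ holds on both the $A$- and $B$-sides for all triples $u\le v\le w$. In the two sup-cases the maps are determined by the colimit universal property, which leaves only the base case, the finite successor case, and the auxiliary lift at infinite non-limit elements as requiring genuine choices; with inductive bookkeeping these can be fixed compatibly, so the obstruction is managerial rather than conceptual.
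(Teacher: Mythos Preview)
Your approach is essentially the paper's: start from Lemma~\ref{lem:inv_k-tree}, then build the short exact sequences by transfinite recursion along the tree structure of $I_\kappa$, taking special $\mathcal A$-precovers at finite-rank nodes, direct limits over the chains of Corollary~\ref{cor:invt_tail}(3),(4) at supremum nodes, and lifting from predecessors via $\Ext^1_R(A_{p(u)},B_u)=0$; continuity is then handled by Lemma~\ref{lem:invt_sup} exactly as you say.

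One simplification worth noting: your rank function $\rho$ satisfies $\rho(u)=t(u)$ (check each clause), and the paper simply inducts on the ordinal $t(u)$ directly. Your stated justification ``by induction on the cardinal $\rk(u)$'' does not quite work as written, since in the limit and infinite-successor cases both $p(u)$ and elements of $C_u$ can have $\rk$ equal to $\rk(u)$; a secondary induction on $t(u)$ within each rank is needed, and that collapses to plain transfinite induction on $t(u)$. This is cosmetic rather than a real gap.
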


\begin{proof}
We start with the continuous direct system $(M_u, f_{vu} \mid u,v\in I_\kappa\;\&\; u\leq v)$ given by Lemma~\ref{lem:inv_k-tree} and construct the exact sequences for each $u \in I_\kappa$ by transfinite induction on $t(u)$.

For each $u\in I_\kappa$ of finite rank, we choose a special $\mathcal A$-precover,
$$ 0 \to B_u \overset{\iota_u}\to A_u \overset{\pi_u}\to M_u \to 0, $$
of $M_u$, and if $t(u) > 0$, we find appropriate morphisms $g_{u\text{p}(u)}: A_{\text{p}(u)} \to A_u$ and $h_{u\text{p}(u)}: B_{\text{p}(u)} \to B_u$ using the precover property for the map $f_{u\text{p}(u)}\circ \pi_{\text{p}(u)}$.

Suppose that $\alpha$ is a limit ordinal and the sequences $0 \to B_u \overset{\iota_u}\to A_u \overset{\pi_u}\to M_u \to 0$ and the maps between them have been constructed for all $u\in I_\kappa$ with $t(u)<\alpha$. Then for each $v\in I_\kappa$ with $t(v)=\alpha$, we define the exact sequence $0 \to B_v \overset{\iota_v}\to A_v \overset{\pi _v} \to M_v \to 0$ as the direct limit of the direct system of already constructed short exact sequences $0 \to B_w \overset{\iota_w}\to A_w \overset{\pi_w}\to M_w \to 0$ where $w$ runs over the chain given by Corollary~\ref{cor:invt_tail} (4) used for $v$. By assumption, we get $A_v\in\mathcal A$ and $B_v\in\mathcal B$.

Finally, suppose that $\alpha = \delta + 1$ for some infinite $\delta$ and we have constructed the exact sequences for all $u \in I_\kappa$ \st $t(u) \leq \delta$.
Similarly as above, we define for each $v\in I_\kappa$ with $t(v)=\alpha$ the exact sequence $0 \to B_v \overset{\iota_v}\to A_v \overset{\pi _v} \to M_v \to 0$ as the direct limit of the direct system of short exact sequences $0 \to B_{v\beta} \overset{\iota_{v\beta}}\to A_{v\beta} \overset{\pi_{v\beta}}\to M_{v\beta} \to 0$ where $\beta$ runs over all ordinal numbers $<\rk(v)$. The morphisms $g_{v\text{p}(v)}: A_{\text{p}(v)} \to A_v$ and $h_{v\text{p}(v)}: B_{\text{p}(v)} \to B_v$ can be defined again by the precover property and the rest of the morphisms by obvious compositions. This concludes the construction.

The fact that the direct system of the exact sequences just constructed is well-defined and continuous follows from the lemmas above, in particular from Lemmas~\ref{lem:invt_proper} and~\ref{lem:invt_sup}.
\end{proof}

Before stating one of the main results in this section, let us recall that a cotorsion pair satisfying the assumptions of TCMC is complete by Theorem~\ref{thm:main} (2), thus it fits the setting of the following theorem.

\begin{thm} \label{thm:lim->pure}
Let $(\mathcal A, \mathcal B)$ be a complete cotorsion pair with both classes closed under direct limits. Then $\mathcal A$ is closed under pure epimorphic images.
\end{thm}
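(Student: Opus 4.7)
The plan is to apply Lemma~\ref{lem:precovers} to the target module $N$, exploit the pure epimorphism $f\colon M\twoheadrightarrow N$ together with $M\in\mathcal A$ to lift $f$ through the resulting special $\mathcal A$-precover, and then to show that the precover splits using the inverse tree structure of the construction.

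Fix an infinite cardinal $\kappa$ so that $N$ is $\kappa$-presented and apply Lemma~\ref{lem:precovers} to $N$ to obtain a continuous direct system of short exact sequences $(0\to B_u\to A_u\to N_u\to 0)_{u\in I_\kappa}$ with $A_u\in\mathcal A$, $B_u\in\mathcal B$, $N_u$ being $\rk(u)$-presented, and $N=\varinjlim_u N_u$. Taking direct limits and using that $\mathcal A$ and $\mathcal B$ are both closed under direct limits, I obtain a short exact sequence $0\to B\to A\xrightarrow{\pi}N\to 0$ with $A\in\mathcal A$ and $B\in\mathcal B$, i.e.\ a special $\mathcal A$-precover of $N$.

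Pulling $\pi$ back along $f$ yields $0\to B\to M\times_N A\to M\to 0$, which splits since $M\in\mathcal A$ and $B\in\mathcal B$. The splitting gives a morphism $g\colon M\to A$ with $\pi g=f$, and since $f$ is pure epi, every finitely presented module mapping to $N$ lifts first to $M$ and then via $g$ to $A$, so $\pi$ is pure epi too and $B$ embeds purely in $A$. It remains to split $\pi$. For every $u\in I_\kappa$ of finite rank, $N_u$ is finitely presented, so the colimit map $N_u\to N$ admits a lift $\tilde\phi_u\colon N_u\to M$ through $f$, and composition with $g$ yields $\sigma_u:=g\tilde\phi_u\colon N_u\to A$, a lift of $N_u\to N$ along $\pi$. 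I would then upgrade the family $\{\sigma_u\}$ to a family of lifts compatible with all transition morphisms in the inverse tree: at limit-rank nodes continuity of the direct system propagates compatibility automatically; at successor nodes the lifts $\sigma_u$ and $\sigma_v\circ(N_u\to N_v)$ for $u\leq v$ differ by a cocycle with values in $\Hom(N_u,B)$, which must be killed by perturbing the $\sigma_u$'s by suitable elements of $\Hom(N_u,B)$. The colimit of such a coherent family would then give a section $\sigma\colon N\to A$ of $\pi$, so that $N$ is a direct summand of $A\in\mathcal A$ and hence $N\in\mathcal A$.

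The main obstacle is the cocycle-killing step. Over a general directed index set, this would be controlled by a $\varprojlim^1$-type obstruction that need not vanish, but the inverse tree $I_\kappa$ is designed precisely to reduce such coherence questions to iterated countable inverse-limit problems, which can be handled with the Mittag-Leffler machinery of Section~\ref{sec:def}. Closure of $\mathcal B$ under direct limits is exactly what should be needed to propagate the perturbations up the tree through both successor and limit stages.
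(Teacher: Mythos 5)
Your setup (building the special $\mathcal A$-precover $0 \to B \to A \overset{\pi}\to N \to 0$ as the direct limit of the tree-indexed system from Lemma~\ref{lem:precovers}, lifting $f$ through $\pi$ via the pull-back, and concluding that $\pi$ is a pure epimorphism) is correct and agrees with the first part of the paper's argument. The gap is in the step you yourself flag as the main obstacle: making the local lifts $\sigma_u\colon N_u \to A$ coherent. Two maps $N_u \to A$ lifting the same map $N_u \to N$ differ by an element of $\Hom_R(N_u,B)$, so rigidifying the family $\{\sigma_u\}$ along even a single countable chain $N_0 \to N_1 \to \dotsb$ amounts to solving $\tau_n - \tau_{n+1}\circ f_{n+1,n} = \delta_n$ for a prescribed family $(\delta_n)$, i.e.\ to the vanishing of $\varprojlim^1 \Hom_R(N_n,B)$. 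That vanishing is essentially equivalent to $\Ext^1_R(N,B)=0$ (cf.\ Lemma~\ref{lem:ext_to_lim1}), which is the conclusion you are trying to prove; and the Mittag-Leffler results of Sections~\ref{sec:filter} and~\ref{sec:def} that you invoke all carry hypotheses of the form $\Ext^1_R(\varinjlim N_n,\mathcal G)=0$ on the very module in question. So the appeal to ``the Mittag-Leffler machinery'' is circular, and the inverse tree is not designed to kill such cocycles --- nothing in Section~\ref{sec:dirlim} makes these $\varprojlim^1$-obstructions vanish. A further, smaller issue: the finite-rank elements of $I_\kappa$ are not cofinal, so even a coherent family over them would a priori only define a map on $\varinjlim_{\rk(u)<\omega} N_u$, not on $N$, and propagating coherence up through the infinite-rank nodes is an additional transfinite obstruction problem.

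The paper avoids splitting $\pi$ (equivalently, constructing a section $N \to A$) altogether; note that splitting $\pi$ is literally equivalent to the statement $N\in\mathcal A$ here, so any honest proof along your lines must hide the full difficulty in the cocycle step. Instead, one writes $N=\varinjlim_{i\in J} K_i$ with $K_i$ finitely presented, uses purity of $\pi$ plus Corollary~\ref{cor:invt_tail} to factor each $k_i\colon K_i \to N$ through some $A_{u_i}$ with $u_i$ of \emph{finite rank} (descending through the tree until the rank becomes finite), and then uses that $M_{u_i}$ is finitely presented to factor $A_{u_i}\to N$ back through some $K_{s(i)}$. Interleaving the two systems produces a direct system with limit $N$ in which the $A_{u_i}\in\mathcal A$ form a cofinal subsystem; closure of $\mathcal A$ under direct limits then gives $N\in\mathcal A$. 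No compatibility between the various lifts is ever required, because one only composes existing maps rather than patching sections. I would recommend reworking the second half of your argument along these lines.
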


\begin{proof} Let $M$ be a pure epimorphic image of a module from $\mathcal A$. We can assume that $M$ is not finitely presented since otherwise $M$ is trivially in $\mathcal A$. Hence, Lemma~\ref{lem:precovers} gives us a continuous direct system $0 \to B_u \overset{\iota_u}\to A_u \overset{\pi_u}\to M_u \to 0$ indexed by $I_\kappa$ for some $\kappa$, and the direct limit $0 \to B \overset{\iota}\to A \overset{\pi} \to M \to 0$ of this system is a special $\mathcal A$-precover of $M$. It follows from our assumption on $M$ that $\pi$ is a pure epimorphism.

Now, $M$ is also the direct limit of some direct system $(K_i, k_{ji}\mid i\preceq j)$ consisting of finitely presented modules and indexed by some poset $(J,\preceq)$. We claim that although there is no obvious relation between the direct systems $(M_u \mid u \in I_\kappa)$ and $(K_i \mid i \in J)$, the following holds: For each $i \in J$, there is $s(i) \in J$ \st $i \prec s(i)$ and $k_{s(i)i}$ factors through $A_u$ for some $u \in I_\kappa$ of finite rank.

To this end, denote for all $i\in J$ by $k_i: K_i \to M$ the colimit maps and fix an arbitrary $i\in J$. Then $k_i$ can be factorized through $\pi$ since $K_i$ is finitely presented and $\pi$ is pure. Moreover, since $A = \varinjlim_{I_\kappa} A_u$, there is $u_1 \in I_\kappa$ \st $k_i$ factors through $A_{u_1}$. If $\rk(u_1)$ is finite, we put $u = u_1$. If not, $A_{u_1}$ is by Corollary~\ref{cor:invt_tail} the direct limit of a direct system consisting of some modules $A_v$ with $t(v) < t(u_1)$. Hence, $k_i$ further factors through $A_{u_2}$ for some $u_2 \in I_\kappa$ \st $t(u_2) < t(u_1)$. If the rank of $u_2$ is finite, we put $u = u_2$. Otherwise, we construct in a similar way $u_3$ \st $t(u_3) < t(u_2)$, and so forth. Since there are no infinite descending sequences of ordinals, we must arrive at some $u = u_n$ of finite rank after finitely many steps.

Hence, there must be $u_i \in I_\kappa$ of finite rank \st $k_i$ factors through $\pi \circ g_{u_i} = f_{u_i} \circ \pi_{u_i}$ where $g_{u_i}: A_{u_i} \to A$ and $f_{u_i}: M_{u_i} \to M$ are the colimit maps. That is, $k_i = f_{u_i} \circ \pi_{u_i} \circ e_i$ for some $e_i: K_i \to A_{u_i}$ and, since $M_{u_i}$ is finitely presented by Lemma~\ref{lem:precovers}, $f_{u_i}$ further factors as $k_{j_i} \circ d_{u_i}$ for some $d_{u_i}: M_{u_i} \to K_{j_i}$ and $j_i \in J$ \st $j_i \succ i$. Together, we have $k_i = k_{j_i} \circ d_{u_i} \circ \pi_{u_i} \circ e_i$. Thus, using the fact that $K_i$ is finitely presented and well-known properties of direct limits, there must exist some $s(i)\succeq j_i$ such that $k_{s(i) i} = k_{s(i) j_i} \circ d_{u_i} \circ \pi_{u_i} \circ e_i$, and the claim is proved.

Now set $\tilde J = J \times \{0,1\}$ and define $(\tilde J, \preceq)$ as the poset generated by the relations $(i,0)\preceq (j,0)$ and $(i,0)\preceq (i,1)\preceq (s(i), 0)$ where $i, j\in J, i\preceq j$. Further, for such $i,j$, put $K_{(i,0)}= K_i$, $K_{(i,1)} = A_{u_i}$, $k_{(j,0),(i,0)} = k_{ji}$, $k_{(i,1), (i,0)} = e_i$, and $k_{(s(i), 0),(i,1)} = k_{s(i) j_i} \circ d_{u_i} \circ \pi_{u_i}$, using the same notation as above. In this way, defining the remaining morphisms as the appropriate compositions, we obtain the system $(K_x, k_{yx}\mid x,y \in \tilde J \;\&\; x\preceq y)$ which is easily seen to be direct, it has $M$ as its direct limit, and $(K_{(i,1)}\mid i\in J)$ forms a cofinal subsystem. Therefore, $M$ is a direct limit of this cofinal subsystem, which clearly consists of modules from $\mathcal A$.

\end{proof}

Now, we can prove the crucial statement regarding cogeneration of cotorsion pairs by a single pure-injective module. To this end, we need the following notion from \cite[Section 9.4]{P}: A pure-injective module $N$ is said to be an \emph{elementary cogenerator} if every pure-injective direct summand of a module elementarily equivalent to $N^{\aleph _0}$ is a direct summand of some power of $N$. Further recall that the \emph{dual module} $M^d$ of a module $M$ is defined as $M^d = \Hom_\Z(M,\Q/\Z)$. It is a well-known fact that any module $M$ is an elementary submodel in its double dual $M^{dd}$ as well as in any reduced $\mathfrak F$-power $M^I/\Sigma_{\mathfrak F} M^I$ provided that $\mathfrak F$ is an ultrafilter on $\mathfrak P(I)$ (cf.\ Definition~\ref{def:f_prod}, these reduced powers are called \emph{ultrapowers}).

\begin{prop} \label{prop:single}
Let $(\mathcal A, \mathcal B)$ be a complete cotorsion pair with $\mathcal B$ closed under direct limits. Then there exists a pure-injective module $E$ such that the class $\Ker \Ext ^1_R(-,E)$ coincides with the class of all pure-epimorphic images of modules from $\mathcal A$. Moreover, $E$ can be taken of the form $\prod_{k\in K}E_k$, with $E_k$ indecomposable for each $k\in K$.
\end{prop}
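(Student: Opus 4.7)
Let $\mathcal A'$ denote the class of pure-epimorphic images of modules from $\mathcal A$; the goal is to produce a pure-injective $E = \prod_{k \in K} E_k$ with each $E_k$ indecomposable such that $\Ker\Ext^1_R(-,E) = \mathcal A'$. For any pure-injective $E \in \mathcal B$, the inclusion $\mathcal A' \subseteq \Ker\Ext^1_R(-,E)$ is automatic: if $0 \to K \to A \to M \to 0$ is pure exact with $A \in \mathcal A$, then pure-injectivity of $E$ makes $\Hom_R(A,E) \to \Hom_R(K,E)$ surjective, and the long exact sequence forces $\Ext^1_R(M,E) \hookrightarrow \Ext^1_R(A,E) = 0$.

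To define $E$, I would first observe that any pure-injective pure submodule of some $B \in \mathcal B$ splits off from $B$ and therefore lies in $\mathcal B$, right-hand classes of cotorsion pairs being closed under direct summands. Since $\mathcal B$ is already closed under products and direct limits, the smallest definable class $\mathcal D$ containing $\mathcal B$ consists precisely of the pure submodules of modules from $\mathcal B$, and so the pure-injective members of $\mathcal D$ coincide with those of $\mathcal B$. By Ziegler's theorem, the indecomposable pure-injectives in $\mathcal D$---equivalently, in $\mathcal B$---form a set $\{E_k\}_{k \in K}$. Put $E = \prod_{k \in K} E_k$; then $E$ is pure-injective and lies in $\mathcal B$ by product-closure.

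For the reverse containment $\Ker\Ext^1_R(-,E) \subseteq \mathcal A'$, take $M$ with $\Ext^1_R(M,E) = 0$ together with a special $\mathcal A$-precover $0 \to B \to A \to M \to 0$, $B \in \mathcal B$. I want this sequence to be pure exact, which amounts to the pure embedding of $B$ into its pure-injective hull $\widehat B$ extending through $A$: once this extension exists, any homomorphism from $B$ to an arbitrary pure-injective factors through $\widehat B$ and hence extends to $A$, yielding surjectivity of $\Hom_R(A,N) \to \Hom_R(B,N)$ for every pure-injective $N$. The existence of the extension follows from $\Ext^1_R(M,\widehat B) = 0$, which would in turn be a consequence of $\widehat B$ being a direct summand of some power of $E$, since $\Ext^1_R(M,E^I) \cong \Ext^1_R(M,E)^I = 0$ and $\Ext^1_R$ respects direct summands in the second argument.

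The main obstacle is precisely this last reduction. In the Ziegler decomposition of $\widehat B$, all indecomposable direct summands lie in $\{E_k\}$ because they pure-embed into $\widehat B$ and $\widehat B$ is elementarily equivalent to $B \in \mathcal B$, hence belongs to $\mathcal D$; so the delicate point is the possible continuous part of $\widehat B$. This is where the elementary cogenerator notion recalled immediately before the statement enters: by replacing each $E_k$, inside its elementary equivalence class, by an indecomposable direct summand of a sufficiently saturated ultrapower, one can arrange that $E$ itself becomes an elementary cogenerator for $\mathcal D$. Since $\widehat B \equiv B$ lies in $\mathcal D$, the elementary cogenerator property then delivers $\widehat B$ as a direct summand of some power of $E$, completing the argument.
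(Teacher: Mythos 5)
Your overall architecture is sound and in places more self-contained than the paper's: the inclusion of the pure-epimorphic images of $\mathcal A$ into $\Ker\Ext^1_R(-,E)$ for pure-injective $E \in \mathcal B$, and the reduction of the converse to showing that a special $\mathcal A$-precover $0 \to B \to A \to M \to 0$ is pure, via extending $B \hookrightarrow \widehat B$ along $A$ using $\Ext^1_R(M,\widehat B) = 0$, are both correct (the paper instead quotes \cite[Lemmas 2.1 and 2.2]{AT} for this identification). You also correctly isolate the one genuinely delicate point: why is $\widehat B$ a direct summand of a power of $E = \prod_k E_k$, where the $E_k$ are the indecomposable pure-injectives in $\mathcal B$?

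Your resolution of that point, however, does not work as stated. The elementary cogenerator property is, by definition, a property of a single module $N$ relative to its own elementary equivalence class: pure-injective direct summands of modules elementarily equivalent to $N^{\aleph_0}$ lie in $\Prod(N)$. The module $\widehat B$ is elementarily equivalent to $B$, not to $E$ nor to any single $E_k$, so no amount of ``saturating'' the individual indecomposable factors turns $E$ into something whose elementary cogenerator property applies to $\widehat B$; in particular, a superdecomposable (continuous) summand $F$ of $\widehat B$ has no indecomposable summands at all and is simply not reached by this device. There is no defined notion of an ``elementary cogenerator for $\mathcal D$'', and the claim that one can arrange $E$ to be one is essentially the statement to be proved. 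The paper avoids this by first taking $E$ to be a product of elementary cogenerators, one from each elementary equivalence class met by $\mathcal B$ (these are in general not indecomposable), and applying \cite[Proposition 2.30]{P} to a representative elementarily equivalent to the given pure-injective; only afterwards is $E$ replaced by a product of indecomposables, using Fischer's decomposition $E = PE(\bigoplus_j E_j) \oplus F$ together with \cite[Corollary 4.38]{P}, which realizes the continuous part $F$ as a summand of a product of indecomposable pure-injectives lying in $\mathcal B$. Your argument can be repaired by invoking exactly these two facts for $\widehat B$ itself, but as written the key step is unjustified. A smaller point: your description of the smallest definable class containing $\mathcal B$ as the class of pure submodules of members of $\mathcal B$ requires checking that the latter class is closed under direct limits, which you do not do; the fact you actually need---that every pure-injective direct summand of a module elementarily equivalent to some $B \in \mathcal B$ lies in $\mathcal B$---follows more directly from Frayne's theorem, as in the first paragraph of the paper's proof.
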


\begin{proof} First of all, since $\mathcal B$ is closed under direct products and direct limits, it is closed under ultrapowers as well. Thence $M\in\mathcal B$ implies by Frayne's Theorem that $N\in\mathcal B$ provided that $N$ is a pure-injective direct summand of a module elementarily equivalent to $M$. In particular, $\mathcal B$ is closed under taking double dual modules.

If we denote by $(\mathcal D, \mathcal E)$ the cotorsion pair cogenenerated by the class of all pure-injective modules from $\mathcal B$, then $\mathcal D$ is exactly the class of all pure-epimorphic images of modules from $\mathcal A$ (cf. \cite[Lemmas 2.1 and 2.2]{AT}; here, the completeness of $(\mathcal A, \mathcal B)$ and $\mathcal B$ being closed under \emph{double duals} are actually needed).

By \cite[Corollary 9.36]{P}, for every module $M$ there exists an elementary cogenerator elementarily equivalent to $M$. Thus, by the first paragraph, we may consider a representative set $\mathcal S$ consisting of elementary cogenerators in $\mathcal B$ \st any module in $\mathcal B$ is elementarily equivalent to a module from $\mathcal S$. Now define $E$ to be the direct product of all modules from $\mathcal S$. To finish the main part of our proof, it is enough to show that any pure-injective module from $\mathcal B$ is in $\Prod(E)$, the class of all direct summands of powers of $E$. This is sufficient since then the left-hand class of the cotorsion pair cogenerated by $\{E\}$ will coincide with $\mathcal D$.

Let, therefore, $M\in \mathcal B$ be a pure-injective module and $N \in \mathcal S$ be a module elementarily equivalent to $M$. By \cite[Proposition 2.30]{P}, $M$ is a pure submodule (hence a direct summand) in a module elementarily equivalent to $N^{\aleph _0}$. Thus $M$ is a direct summand of some power of $N$ by the definition of elementary cogenerator.

To prove the moreover statement, first recall that, by a well-known result of Fischer, $E = PE(\bigoplus _{j\in J} E_j)\oplus F$ where $PE$ stands for pure-injective hull, $E_j$ is indecomposable pure-injective for each $j\in J$, and $F$ has no indecomposable direct summands; it may happen that $J$ is empty or $F = 0$. By \cite[Corollary 4.38]{P}, $F$ is a direct summand of a direct product, say $\prod _{l\in L}E_l$, of indecomposable pure-injective direct summands of modules elementarily equivalent to $E$. According to the first paragraph, $E_l\in\mathcal B$ for every $l\in L$. It follows that $PE(\bigoplus _{j\in J} E_j)\oplus\prod _{l\in L} E_l$ cogenerates the same cotorsion pair as $E$ does. Further, $PE(\bigoplus _{j\in J} E_j)$ is a direct summand in $\prod _{j\in J} E_j$ and the latter module is in $\mathcal B$ since it is elementarily equivalent to $PE(\bigoplus _{j\in J} E_j)\in\mathcal B$. (Here, we use the fact that the direct sum is an elementary submodel in its pure-injective hull as well as in the direct product.) Thus, again, $\prod_{k\in J\cup L} E_k$ cogenerates the same cotorsion pair as $E$ did.
\end{proof}

We are in a position to state the main result of this section. It is in fact an immediate consequence of the previous statements.

\begin{thm} \label{thm:single_tcmc}
Let $\mathfrak C = (\mathcal A, \mathcal B)$ be a complete cotorsion pair with both classes closed under direct limits. Then $\mathfrak C$ is cogenerated by a direct product of indecomposable pure-injective modules.
\end{thm}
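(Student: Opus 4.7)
The plan is to derive the theorem almost immediately by combining Theorem~\ref{thm:lim->pure} with Proposition~\ref{prop:single}. Both results already do the heavy lifting: Theorem~\ref{thm:lim->pure} tells us that under the stated hypotheses the class $\mathcal A$ is closed under pure-epimorphic images, while Proposition~\ref{prop:single} produces a pure-injective $E$, which can be chosen as a product $\prod_{k\in K}E_k$ of indecomposable pure-injective modules, such that $\Ker\Ext^1_R(-,E)$ coincides with the class of all pure-epimorphic images of modules from $\mathcal A$.

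First, I would observe that the hypotheses of Theorem~\ref{thm:single_tcmc} are exactly those of Theorem~\ref{thm:lim->pure}, so $\mathcal A$ is closed under pure-epimorphic images. Since trivially every module in $\mathcal A$ is a pure-epimorphic image of itself, this means that $\mathcal A$ equals the class of all pure-epimorphic images of modules from $\mathcal A$.

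Second, the hypotheses also imply those of Proposition~\ref{prop:single} (completeness and $\mathcal B$ closed under direct limits), so I apply it to obtain a pure-injective module of the form $E = \prod_{k\in K} E_k$ with each $E_k$ indecomposable, such that $\Ker\Ext^1_R(-,E)$ is precisely the class of pure-epimorphic images of modules from $\mathcal A$. Combining the two displayed equalities gives $\mathcal A = \Ker\Ext^1_R(-,E)$, which by definition says that the cotorsion pair $\mathfrak C$ is cogenerated by the single module $E$, and hence by a direct product of indecomposable pure-injective modules.

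There is no real obstacle left to overcome here; the statement is essentially a corollary, and the only thing to check is that the hypotheses of the two preceding results are indeed implied by the hypotheses of the theorem, which is immediate. The substantive content has been placed in Theorem~\ref{thm:lim->pure} (requiring the inverse-tree machinery and the continuous direct system of special precovers) and in Proposition~\ref{prop:single} (using Fischer's decomposition, Frayne's theorem, and elementary cogenerators).
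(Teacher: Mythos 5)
Your proposal is correct and is precisely the argument the paper itself gives: the theorem is stated as an immediate consequence of Theorem~\ref{thm:lim->pure} and Proposition~\ref{prop:single}, combined exactly as you describe. Nothing is missing.
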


\begin{proof} This follows easily by Theorem~\ref{thm:lim->pure} and Proposition~\ref{prop:single}.
\end{proof}

\begin{rem}
(1). Note that if $R$ is an artin algebra or, more generally, a semi-primary ring and $(\mathcal A,\mathcal B)$ is a projective cotorsion pair satisfying the hypotheses of TCMC, it follows from~\cite[Corollary 4.5]{KS2} that the class $\mathcal B$ is also of the form $\Ker \Ext ^1_R(-, N)$ for a pure-injective module $N$.

(2). The distinction between closure under direct limits and closure under pure-epimorphic images is rather subtle. The two notions often coincide, but no example of a (hereditary) cotorsion pair $(\mathcal A, \mathcal B)$ with $\mathcal A$ closed under direct limits and \emph{not} closed under pure-epimorphic images is known to the authors as yet.
\end{rem}


\section{Compactly generated triangulated categories}
\label{sec:triang}

In this section, we compare the results we have obtained above with the work of Krause on smashing localizations of triangulated categories in~\cite{K,K2}. As mentioned before, there is a bijective correspondence between smashing localizing pairs in the stable module category and certain cotorsion pairs in the usual module category which works for self-injective artin algebras~\cite{KS}. However, as we want to indicate now, there are strong analogues of both settings well beyond where the correspondence from~\cite{KS} works. First, we will recall some necessary terminology.

Let $\T$ be a \emph{triangulated category} which admits arbitrary (set indexed) coproducts. We will not define this concept here since it is well-known and the definition is rather complicated, but we refer for example to~\cite[IV]{GM}, \cite{H} or~\cite[\S 3]{Ke}. We say that an object $C \in \T$ is \emph{compact} if the canonical map $\bigoplus_i \Hom_\T(C,X_i) \to \Hom_\T(C,\coprod_i X_i)$ is an isomorphism for any family $(X_i)_{i \in I}$ of objects of $\mathcal T$. Here, we will denote coproducts in $\T$ by the symbol $\coprod$ to distinguish them from direct sums of abelian groups. Let us denote by $\Tcomp$ the full subcategory of $\T$ formed by the compact objects. The category $\T$ is then called \emph{compactly generated} if
\begin{enumerate}
\item $\Tcomp$ is equivalent to a small category.
\item Whenever $X \in \T$ \st $\Hom_\T(C,X) = 0$ for all $C \in \Tcomp$, then $X = 0$.
\end{enumerate}

As an important example here, let $R$ be a \emph{quasi-Frobenius ring}, that is a ring for which projective and injective modules coincide, and let $\stModR$ be the \emph{stable category}, that is the quotient of $\ModR$ modulo the projective modules. Then $\stModR$ is triangulated~\cite{H} and compactly generated~\cite[\S 1.5]{K}. Moreover, compact objects are precisely those isomorphic in $\stModR$ to finitely generated $R$-modules. Other examples of compactly generated triangulated categories are unbounded derived categories of module categories and the stable homotopy category.

Let $\mathcal X$ be a full triangulated subcategory of $\mathcal T$. Then $\mathcal X$ is called \emph{localizing} if $\mathcal X$ is closed under forming coproducts with respect to $\mathcal T$. We call $\mathcal X$ \emph{strictly localizing} if the inclusion $\mathcal X \to \T$ has a right adjoint. Finally, $\mathcal X$ is said to be \emph{smashing} if the right adjoint preserves coproducts. Note that being a smashing subcategory is stronger than being strictly localizing, which in turn is stronger than being a localizing subcategory.

A localizing subcategory $\mathcal X \subseteq \T$ is \emph{generated} by a class $\mathcal C$ of objects in $\T$ if it is the smallest localizing subcategory of $\T$  containing $\mathcal C$. Notice that $\T$ itself is generated by $\Tcomp$ as a localizing subcategory (cf.~\cite[\S 5]{R} or \cite[Theorem 2.1]{N2}).

As in~\cite{KS}, we define $(\mathcal X,\mathcal Y)$ to be a \emph{localizing pair} if $\mathcal X$ is a strictly localizing subcategory of $\T$ and $\mathcal Y = \Ker \Hom_\T(\mathcal X,-)$. The objects in $\mathcal Y$ are then called \emph{$\mathcal X$-local}. Note that this definition makes sense also for non-compactly generated triangulated categories and with this in mind, $(\mathcal X,\mathcal Y)$ is a localizing pair in $\T$ \iff $(\mathcal Y,\mathcal X)$ is a localizing pair in $\T^{op}$. Moreover, the class $\mathcal X$ is smashing \iff the class $\mathcal Y$ of all $\mathcal X$-local objects is closed under coproducts.

There is a useful analogue of countable direct limits in a triangulated category, called a homotopy colimit. Let
$$ X_0 \overset{\varphi_0}\to X_1 \overset{\varphi_1}\to X_2 \overset{\varphi_2}\to \dotsb $$
be a sequence of maps in $\T$. A \emph{homotopy colimit} of the sequence, denoted by $\hocolim X_i$, is by definition an object $X$ which occurs in the triangle
$$ \coprod_{i<\omega} X_i \overset{\Phi}\to \coprod_{i<\omega} X_i \to X \to \coprod_{i<\omega} X_i[1] \eqno{(\ddag)} $$
where the $i$-th component of the map $\Phi$ is the composite
$$
X_i \overset{(^{\;\;\mathrm{id}}_{-\varphi_i})}\to X_i \sqcup X_{i+1} \overset{j}\to \coprod_{i<\omega} X_i
$$
and $j$ is the split monomorphism to the coproduct. Note that a homotopy colimit is unique up to a (non-unique) isomorphism.
As an easy but important fact, we point up that when applying the functor $\Hom_\T(-,Z)$ on $(\ddag)$ for any $Z \in \T$, we get an exact sequence
{\small
$$
0 \leftarrow {\varprojlim}^1 \Hom_\T(X_i,Z) \leftarrow \prod \Hom_\T(X_i,Z) \overset{\Phi^*}\leftarrow \prod \Hom_\T(X_i,Z) \leftarrow \varprojlim \Hom_\T(X_i,Z) \leftarrow 0
$$
}

\noindent
where $\Phi^* = \Hom_\T(\Phi,Z)$ and $\varprojlim^1$ is the first derived functor of inverse limit.


Having recalled the terminology, we also recall the crucial correspondence between cotorsion pairs and localizing pairs shown in \cite{KS}:

\begin{thm} \label{thm:triang_to_cot}
Let $R$ be a self-injective artin algebra, $\ModR$ the category of all right $R$-modules and $\stModR$ the stable category. Then the assignment
$$ (\mathcal A,\mathcal B) \to (\underline{\mathcal A},\underline{\mathcal B}) $$
gives a bijective correspondence between projective cotorsion pairs in $\ModR$ and localizing pairs in $\stModR$. Moreover, the following hold:
\begin{enumerate}
\item $\underline{\mathcal A}$ is smashing in $\stModR$ \iff both $\mathcal A$ and $\mathcal B$ are closed under direct limits in $\ModR$.
\item $\underline{\mathcal A}$ is generated, as a localizing subcategory in $\stModR$, by a set of compact objects \iff $(\mathcal A, \mathcal B)$ is a cotorsion pair of finite type in $\ModR$.
\end{enumerate}
\end{thm}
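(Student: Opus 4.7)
The plan is to derive this theorem from the Krause--Solberg correspondence of \cite{KS} together with the main results of the preceding sections. The bijection itself is essentially their result: given a projective cotorsion pair $(\mathcal A,\mathcal B)$, one verifies that $\underline{\mathcal A}$ is strictly localizing in $\stModR$ by checking closure under coproducts (which passes from $\ModR$), under shifts---syzygies are handled by heredity, cosyzygies by the two-out-of-three property of $\mathcal B$---and by constructing the right adjoint from the special $\mathcal A$-precovers guaranteed by completeness of the cotorsion pair. The class $\underline{\mathcal B}$ is then recognized as the $\underline{\mathcal A}$-local objects. Conversely, a localizing pair in $\stModR$ lifts by taking preimages in $\ModR$ and adjoining the projectives, and the projective cotorsion pair axioms are routine to verify.

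For clause (1), one direction is easy: if both $\mathcal A$ and $\mathcal B$ are closed under direct limits, then in particular $\mathcal B$ is closed under direct sums, which passes to coproduct closure of $\underline{\mathcal B}$ in $\stModR$ (since $\mathcal B$ contains the projectives), and this is exactly the smashing condition. The substantive direction is the converse. Smashing forces $\mathcal B$ to be closed under direct sums in $\ModR$; combined with closure under extensions and products, a transfinite argument using continuous chains (in the spirit of Eklof's lemma) upgrades this to closure of $\mathcal B$ under unions of well-ordered chains. Theorem~\ref{thm:main} then applies, yielding definability of $\mathcal B$ and thus closure under direct limits. Closure of $\mathcal A$ under direct limits is the most delicate part and will be deduced from Theorem~\ref{thm:single_tcmc}: the cotorsion pair is then cogenerated by a single pure-injective module $E$, and pure-injectivity of $E$ ensures that $\Ext^1_R(-,E)$ sends direct limits to inverse limits, so $\mathcal A = \Ker\Ext^1_R(-,E)$ is closed under direct limits.

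For clause (2), the compact objects in $\stModR$ are precisely the images of finitely generated modules, which over the artin algebra $R$ coincide with the images of strongly finitely presented modules. Under the bijection of the first half, generation of $\underline{\mathcal A}$ as a localizing subcategory by a set $\underline{\mathcal S}$ of compacts translates, via the natural identification $\Hom_{\stModR}(\underline M,\underline N)\cong \Ext^1_R(\Omega^{-1}M,N)$ available over a self-injective ring, into the statement that $\mathcal B = \Ker\Ext^1_R(\mathcal S',-)$ for a set $\mathcal S'$ of strongly finitely presented modules; this is exactly the definition of finite type.

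The main obstacle I anticipate is the non-obvious half of (1)---forcing direct-limit closure of the left class $\mathcal A$ from the purely right-sided coproduct-closure provided by smashing. The plan there crucially relies on the single pure-injective cogeneration of Theorem~\ref{thm:single_tcmc}, which in turn depends on first promoting coproduct closure of $\mathcal B$ to closure under unions of well-ordered chains so that the Countable Telescope Conjecture is applicable.
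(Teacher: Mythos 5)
The paper gives no argument here at all: Theorem~\ref{thm:triang_to_cot} is obtained by quoting \cite[Theorem 7.6 and Corollary 7.7]{KS} and \cite[Corollary 4.6]{AST}. Your attempt to rederive it from the paper's internal machinery is therefore a genuinely different route, but the hard direction of clause (1) has two real gaps. The first is your claim that closure of $\mathcal B$ under direct sums, extensions and products can be ``upgraded'' to closure under unions of well-ordered chains by a transfinite argument in the spirit of Eklof's Lemma. Eklof's Lemma concerns filtrations in the \emph{first} variable of $\Ext$ and gives nothing here. What heredity (the two-out-of-three property) does give is the case of countable cofinality, since $\bigcup_{n}B_n$ is the cokernel of the monomorphism $\bigoplus_n B_n\to\bigoplus_n B_n$. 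But for a continuous chain of uncountable cofinality $\lambda$ the identity $\Ext^1_R(A,\bigcup_\alpha B_\alpha)\cong\varinjlim_\alpha\Ext^1_R(A,B_\alpha)$ is only available for $<\!\cf(\lambda)$-presented $A$, so you would already need to know that the cotorsion pair is generated by small modules---which is precisely the \emph{conclusion} of Theorem~\ref{thm:main}, whose hypothesis you are trying to verify. No closure property you have listed supplies this.

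The second gap is a circularity: you deduce closure of $\mathcal A$ under direct limits from Theorem~\ref{thm:single_tcmc}, but that theorem (through Theorem~\ref{thm:lim->pure}) \emph{assumes} that both classes, in particular $\mathcal A$, are closed under direct limits. Proposition~\ref{prop:single}, which needs only $\mathcal B$ closed under direct limits, identifies $\Ker\Ext^1_R(-,E)$ with the class of pure-epimorphic images of modules from $\mathcal A$, not with $\mathcal A$ itself; equating the two is again Theorem~\ref{thm:lim->pure}. The non-circular argument runs in the opposite direction, starting on the triangulated side: Krause's Theorems A and C for the smashing subcategory $\underline{\mathcal A}$ (quoted here as Theorems~\ref{thm:char_B_triang} and~\ref{thm:single_triang}) show that $\underline{\mathcal B}$ is cut out by coherent data, whence $\mathcal B$ is definable and closed under direct limits, and that $\mathcal A=\Ker\Ext^1_R(-,\Omega E)$ for a pure-injective module $E$, whence $\mathcal A$ is closed under direct limits by Auslander's lemma. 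This is exactly what the cited results of \cite{KS} and \cite{AST} package. Your sketch of the bijection itself and of clause (2) is essentially correct.
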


\begin{proof}
This is an immediate consequence of~\cite[Theorem 7.6 and Corollary 7.7]{KS} and~\cite[Corollary 4.6]{AST}.
\end{proof}

We have proved in Theorem~\ref{thm:main} that any cotorsion pair $(\mathcal A,\mathcal B)$ coming from a smashing localizing pair is of countable type. We show that it is possible to state a similar countable type result for $\stModR$ purely in the language of triangulated categories.

\begin{defn}
Let $\T$ be a compactly generated triangulated category. We call an object $X \in \T$ \emph{countable} if it is isomorphic to the homotopy colimit of a sequence of maps $X_0 \overset{\varphi_0}\to X_1 \overset{\varphi_1}\to X_2 \overset{\varphi_2}\to \dotsb$ between compact objects. Furthermore, let $\T_\omega$ stand for the full subcategory of $\T$ formed by all countable objects.
\end{defn}

Note that $\T_\omega$ is skeletally small. Now we can state the following theorem:

\begin{thm} \label{thm:main_triang}
Let $R$ be a self-injective artin algebra and $\mathcal T = \stModR$ the stable category of right $R$-modules. Then every smashing subcategory of $\T$ is generated, as a localizing subcategory of $\T$, by a set of countable objects.
\end{thm}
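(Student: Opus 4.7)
The plan is to route the problem through the Krause--Solberg bijection of Theorem~\ref{thm:triang_to_cot} and transfer the conclusion of the Countable Telescope Conjecture (Theorem~\ref{thm:main}) from the module side back to the stable category. Let $\mathcal X$ be a smashing subcategory of $\T = \stModR$. By Theorem~\ref{thm:triang_to_cot}, there is a projective cotorsion pair $(\mathcal A,\mathcal B)$ in $\ModR$ with $\mathcal X = \underline{\mathcal A}$ and with both $\mathcal A$ and $\mathcal B$ closed under direct limits. In particular, $\mathcal B$ is closed under unions of well-ordered chains, so Theorem~\ref{thm:main} applies and yields a set $\mathcal S$ of strongly countably presented modules which generates $(\mathcal A,\mathcal B)$ as a cotorsion pair. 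The claim will be that $\underline{\mathcal S} = \{\underline S \mid S \in \mathcal S\}$ is a set of countable objects of $\T$ that generates $\mathcal X$ as a localizing subcategory.

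First, I would verify that each $\underline S$ lies in $\T_\omega$. Since $R$ is noetherian, every countably presented module $S$ can be written as a countable direct limit $S = \varinjlim_{n<\omega} S_n$ of finitely presented modules. The canonical short exact sequence
$$0 \longrightarrow \bigoplus_{n<\omega} S_n \overset{1-\text{shift}}{\longrightarrow} \bigoplus_{n<\omega} S_n \longrightarrow S \longrightarrow 0$$
is pure and, since $\ModR$ is a Frobenius category in the sense needed for $\stModR$, descends to a triangle in $\T$ whose form matches the defining triangle $(\ddag)$ of a homotopy colimit. Hence $\underline S \cong \hocolim\,\underline{S_n}$. As the compact objects of $\T$ are precisely (the images of) finitely generated modules, each $\underline{S_n}$ is compact, so $\underline S \in \T_\omega$.

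The crux is then showing $\mathcal X = \mathrm{Loc}(\underline{\mathcal S})$, where $\mathrm{Loc}(\underline{\mathcal S})$ denotes the smallest localizing subcategory of $\T$ containing $\underline{\mathcal S}$. One inclusion is immediate from $\underline{\mathcal S}\subseteq\underline{\mathcal A}=\mathcal X$. For the reverse, set $\mathcal X' = \mathrm{Loc}(\underline{\mathcal S})$; since $\mathcal S$ is a set, $\mathcal X'$ is strictly localizing by Brown representability in the compactly generated category $\T$, so $(\mathcal X',(\mathcal X')^\perp)$ is a localizing pair. By Theorem~\ref{thm:triang_to_cot} it corresponds to a projective cotorsion pair $(\mathcal A',\mathcal B')$ in $\ModR$, and $\mathcal X'\subseteq\mathcal X$ gives $\mathcal B\subseteq\mathcal B'$. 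Conversely, if $N \in \mathcal B'$ then $\underline N \in (\mathcal X')^\perp$; using that $\mathcal X'$ is triangulated (hence closed under shifts), one obtains $\Ext^1_R(S,N) \cong \Hom_\T(\underline S,\underline N[1]) = 0$ for every $S \in \mathcal S$. Since $\mathcal S$ generates $(\mathcal A,\mathcal B)$ as a cotorsion pair, this forces $N \in \mathcal B$. Hence $\mathcal B = \mathcal B'$, whence $\mathcal X = \mathcal X'$.

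The main technical obstacle lies precisely in the identification $\mathcal X = \mathrm{Loc}(\underline{\mathcal S})$. A more direct transfinite-induction argument along an $\mathcal S$-filtration of $M\in\mathcal A$ stalls at limit ordinals of uncountable cofinality, because homotopy colimits in a triangulated category are available most naturally only for countable sequences. Routing the argument through Theorem~\ref{thm:triang_to_cot} bypasses this difficulty by comparing both localizing pairs via their associated cotorsion pairs on the module side, where closure under direct limits and $\Ext^1$-orthogonality are already fully understood.
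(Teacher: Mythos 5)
Your proposal is correct and shares the paper's setup — pass to the projective cotorsion pair via Theorem~\ref{thm:triang_to_cot}, apply Theorem~\ref{thm:main} to obtain the set $\mathcal S$ of strongly countably presented generators, and check that each $\underline S$ is a countable object (the paper simply cites \cite[Lemma 4.3]{R} for this; note that writing a countably presented module as a countable direct limit of finitely presented ones needs no noetherian hypothesis) — but it takes a genuinely different route at the crucial inclusion $\mathcal X \subseteq \mathrm{Loc}(\underline{\mathcal S})$. You invoke the fact that a localizing subcategory generated by a \emph{set} of objects of a compactly generated triangulated category is strictly localizing, feed the resulting localizing pair $(\mathcal X',(\mathcal X')^\perp)$ back through the Krause--Solberg bijection, and compare right-hand classes: $N\in\mathcal B'$ gives $\Ext^1_R(S,N)\cong\Hom_\T(\underline S,\underline N[1])=0$ for all $S\in\mathcal S$, hence $\mathcal B=\mathcal B'$ and $\mathcal X=\mathcal X'$. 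This is sound, but the strict-localizing-ness of $\mathrm{Loc}(\underline{\mathcal S})$ is a substantial external input (Brown representability for well-generated triangulated categories, in the spirit of Neeman or \cite{K3}) that you should cite explicitly; it is precisely the machinery the paper's proof is built to avoid. The paper instead constructs, for every $X\in\T$, an explicit triangle $X\to B_X\to L_X\to X[1]$ with $B_X\in\underline{\mathcal B}$ and $L_X\in\mathrm{Loc}(\underline{\mathcal S})$, by iterating universal extensions by modules from $\mathcal S$ up to stage $\aleph_1$ (countable presentability guarantees the process stabilizes there); the cokernel $L_{\aleph_1}$ is $\Add\,\mathcal S$-filtered of length $\aleph_1$, and the limit steps of the induction showing $L_\alpha\in\mathrm{Loc}(\underline{\mathcal S})$ are handled by Corollary~\ref{cor:lim_cl}, i.e.\ by Mitchell's bound on the cohomological dimension of directed sets of size $<\aleph_\omega$ — which is exactly how the paper resolves the ``stalling at uncountable limit ordinals'' issue you raise, without leaving countable homotopy colimits. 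Given the triangle, any $A\in\mathcal X$ maps to zero in $B_A$, so it is a direct summand of $L_A[-1]$ and lies in $\mathrm{Loc}(\underline{\mathcal S})$. In short: your route buys brevity at the cost of heavier abstract localization theory; the paper's buys self-containedness and an explicit $\mathcal B$-preenvelope triangle.
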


We postpone the proof until after a few preparatory observations and lemmas. First note that countable objects in $\stModR$ for a self-injective algebra $R$ are precisely those isomorphic in $\stModR$ to countably generated modules from $\ModR$, see~\cite[Lemma 4.3]{R}.

Next, we recall a technical statement concerning vanishing of derived functors of inverse limits. We recall that $\varprojlim^k$ stands for the $k$-th derived functor of inverse limit and, for convenience, we let $\aleph_{-1} = 1$.

\begin{lem} \label{lem:cdim} \cite{Mi}
Let $R$ be a ring and $I$ be a directed set whose smallest cofinal subset has cardinality $\aleph_\alpha$, where $\alpha$ is an ordinal number or $-1$. Put
$$ d = \sup \{k < \omega \mid {\varprojlim}^k N_i \ne 0 \textrm{ for some } (N_i)_{i \in I^{op}} \} $$
where $(N_i)_{i \in I^{op}}$ stands for an inverse system of right $R$-modules indexed by $I^{op}$. Then $d = \alpha+1$ if $\alpha$ is finite and $d = \omega$ if $\alpha$ is an infinite ordinal number.
\end{lem}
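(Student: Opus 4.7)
The plan is to prove the stated bounds on $d$ in three stages: a cofinality reduction, an upper bound on the nonvanishing degree, and a matching lower bound exhibited by explicit inverse systems. Throughout, the result is essentially Mitchell's classical calculation of the cohomological dimension of a directed poset as a small indexing category, and the plan below outlines how I would organize the argument.

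\emph{Reduction.} First I would reduce to the case $|I|=\aleph_\alpha$. The restriction functor from $I^{op}$-indexed systems to $J^{op}$-indexed systems along a cofinal inclusion $J\hookrightarrow I$ is exact and admits a right adjoint (right Kan extension); a standard cofinality argument shows it induces isomorphisms $\varprojlim^k_{I^{op}} N\cong\varprojlim^k_{J^{op}}(N|_J)$ for every $k\geq 0$. Picking $J$ of minimal cardinality $\aleph_\alpha$ lets me replace $I$ by $J$ from the outset.

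\emph{Upper bound.} Next I would prove $\varprojlim^k=0$ for $k>\alpha+1$ when $\alpha$ is finite, by induction on $\alpha$. The base $\alpha=-1$ is immediate because then $I$ has a top element and $\varprojlim$ reduces to evaluation, hence is exact. For the induction step I would compute $\varprojlim^k_I N$ via the Roos (bar) cochain complex, whose $k$-cochains are indexed by strictly ascending $(k{+}1)$-chains in $I$. Writing $I$ as a directed union of cofinal subposets of cardinality strictly less than $\aleph_\alpha$ and applying the inductive hypothesis on each, one controls the contribution of long chains and obtains the bound $\alpha+1$. For infinite $\alpha$ the inductive bound is uninformative at every finite degree, yielding only $d\leq\omega$.

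\emph{Lower bound.} To match the upper bound, I would exhibit concrete inverse systems. The archetype is $(\Z,\cdot p)$ indexed by $\omega^{op}$ for a prime $p$, whose $\varprojlim^1$ is nonzero because the cokernel of the map $\prod \Z\to\prod \Z$ sending $(x_n)\mapsto(x_n-px_{n+1})$ is nontrivial. For higher finite $\alpha$ I would tensor $\alpha+1$ copies of such one-dimensional systems and index by the product $\omega_0\times\omega_1\times\cdots\times\omega_\alpha$ of initial ordinals (which sits cofinally inside any directed set of cardinality $\aleph_\alpha$), then apply a K\"unneth-style computation on the Roos complex to produce a nonzero class in degree $\alpha+1$. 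For infinite $\alpha$, these finite-degree constructions embed cofinally into $I$ for each $k<\omega$ separately, giving $d=\omega$.

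\emph{Main obstacle.} The technically substantial step is the analysis of the Roos complex in the upper-bound induction, where one must quantitatively control how cohomological dimension grows with the cardinality of the index set, together with the K\"unneth-style computation in the lower bound. This bookkeeping is the core of Mitchell's theorem, and is the reason the lemma is invoked here by direct citation to \cite{Mi} rather than reproved from scratch.
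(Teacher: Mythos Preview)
The paper does not prove this lemma; it is stated with a bare citation to Mitchell~\cite{Mi}, as you yourself note in your final paragraph. There is therefore nothing in the paper to compare your sketch against, and your outline is essentially a proposed reconstruction of Mitchell's argument. For the paper's purposes, moreover, only the upper bound is ever used (Lemma~\ref{lem:lim_pres} needs just $\varprojlim^{n+2}=0$ when $|I|\le\aleph_n$).

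Your cofinality reduction and upper-bound sketch are reasonable, if schematic. The lower-bound step, however, has a genuine gap. The assertion that the product $\omega_0\times\omega_1\times\cdots\times\omega_\alpha$ ``sits cofinally inside any directed set of cardinality $\aleph_\alpha$'' is not correct, and neither direction of a cofinal comparison does what you need. Take $I=\omega_1$ (so $\alpha=1$). There is no cofinal order-preserving map $\omega_1\to\omega\times\omega_1$, because the first component $\omega_1\to\omega$ must be eventually constant (regularity of $\omega_1$), so the image is bounded in the $\omega$-coordinate. Conversely, a cofinal map $\omega\times\omega_1\to\omega_1$ (which does exist, e.g.\ the second projection) only lets you transfer inverse systems \emph{from} $\omega_1$ to the product, not the other way. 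Hence your K\"unneth construction on the product does not yield an inverse system on $I$ itself with nonvanishing $\varprojlim^{\alpha+1}$. Mitchell's actual lower bound is built directly from the combinatorics of the given poset rather than via such a product embedding; this is exactly the ``technically substantial'' part you flag, and it is not bypassed by the shortcut you propose.
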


The latter lemma has important consequences for direct limits that are ``small enough''. Recall that given a class $\mathcal C$ of modules, we denote by $\Add\,\mathcal C$ the class of all direct summands of arbitrary direct sums of modules in $\mathcal C$.

\begin{lem} \label{lem:lim_pres}
Let $R$ be a ring and $(M_i)_{i \in I}$ be a direct system of $R$-modules \st $\vert I \vert < \aleph_\omega$. Then there is an exact sequence:
$$
0 \to X_n \to \dotsb \to X_1 \to X_0 \to \varinjlim M_i \to 0,
$$
where $n$ is a non-negative integer and $X_j \in \Add\,\{M_i \mid i\in I\}$ for all $j = 0, \dots, n$.
\end{lem}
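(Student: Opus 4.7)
The plan is to proceed by induction on the least non-negative integer $n$ with $|I|\leq\aleph_n$, which is well-defined since $|I|<\aleph_\omega$. In fact, I will aim for the sharper conclusion that a resolution of length $n+1$ always suffices, with every term in $\Add\{M_i\mid i\in I\}$.

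\emph{Base case} $|I|\leq\aleph_0$. Being directed and countable, $I$ contains a countable cofinal well-ordered chain $i_0\leq i_1\leq i_2\leq\cdots$, and hence $\varinjlim_{i\in I}M_i\cong\varinjlim_{k<\omega}M_{i_k}$. Setting $L=\bigoplus_{k<\omega}M_{i_k}\in\Add\{M_i\mid i\in I\}$, the classical telescope sequence
$$0\to L\xrightarrow{\phi}L\to\varinjlim_{k<\omega}M_{i_k}\to 0,\qquad \phi|_{M_{i_k}}(x)=x-f_{i_{k+1},i_k}(x),$$
is a length-$1$ resolution of the desired form.

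\emph{Inductive step.} Assume the statement for all directed sets of cardinality $<\aleph_n$ and take $|I|=\aleph_n$ with $n\geq 1$. Express $I$ as a continuous ascending union $I=\bigcup_{\beta<\aleph_n}I_\beta$ of directed subsets with $|I_\beta|<\aleph_n$, and set $L_\beta:=\varinjlim_{i\in I_\beta}M_i$. By the inductive hypothesis each $L_\beta$ admits a resolution of length $n$ by objects in $\Add\{M_i\mid i\in I\}$; choosing these resolutions canonically (for instance via a fixed bar-type construction on the restricted diagrams) makes the assignment $\beta\mapsto Y_\beta^\bullet$ functorial on the chain $\aleph_n$. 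Then apply the bar resolution of the outer colimit $\varinjlim_\beta L_\beta=\varinjlim_I M_i$,
$$\cdots\to\bigoplus_{\beta_0\leq\beta_1\leq\beta_2}L_{\beta_0}\to\bigoplus_{\beta_0\leq\beta_1}L_{\beta_0}\to\bigoplus_{\beta_0}L_{\beta_0}\to\varinjlim_I M_i\to 0,$$
and substitute each $L_{\beta_0}$ by its length-$n$ resolution. Totalising the resulting double complex gives a resolution of $\varinjlim_I M_i$ by objects in $\Add\{M_i\mid i\in I\}$.

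\emph{Main obstacle.} The crux is to see that this total complex has finite length; equivalently, that the outer bar resolution above can itself be truncated at some finite stage with the final syzygy remaining in $\Add\{L_\beta\}$. This is exactly where Lemma~\ref{lem:cdim} is used: applied to the index set $\aleph_n$, whose smallest cofinal subset has cardinality $\aleph_n$, it forces $\varprojlim^k=0$ for $k\geq n+2$, which by a dimension-shifting argument bounds the length of the outer bar resolution by a finite constant. Combining this outer truncation with the inner length-$n$ resolutions then yields a total resolution of length at most $n+1$, completing the induction. The delicate point is verifying that the truncated outer syzygy really lies in the additive closure of the $L_\beta$ (not merely in, say, a pure-epimorphic image of it), so that after substituting length-$n$ resolutions the final term of the totalisation still belongs to $\Add\{M_i\mid i\in I\}$; this bookkeeping is the principal technical hurdle to be carried out.
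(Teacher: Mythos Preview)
Your inductive plan diverges from the paper's proof, which is direct and uses no induction. The paper writes down the single bar resolution
\[
\cdots \to \bigoplus_{i_0<i_1<i_2} M_{i_0} \xrightarrow{\delta_1} \bigoplus_{i_0<i_1} M_{i_0} \xrightarrow{\delta_0} \bigoplus_{i_0} M_{i_0} \to \varinjlim M_i \to 0,
\]
uses Jensen's identification of the cohomology of $\Hom_R(-,Y)$ of this complex with $\varprojlim^k\Hom_R(M_i,Y)$, and then takes $Y=\Ker\delta_n$. The vanishing of $\varprojlim^{n+2}$ from Lemma~\ref{lem:cdim} says that the corestriction $\tilde\delta_{n+1}\colon B_{n+2}\to\Ker\delta_n$ is a coboundary, i.e.\ $\tilde\delta_{n+1}=r\circ\delta_{n+1}$ for some $r$; since $\tilde\delta_{n+1}$ is surjective this forces $r$ to split the inclusion $\Ker\delta_n\hookrightarrow B_{n+1}$. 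Hence the syzygy is in $\Add\{M_i\}$ and the truncated bar complex is the desired resolution. One application of Lemma~\ref{lem:cdim}, no double complex.

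Your scheme has a genuine gap beyond bookkeeping. To form the double complex you need the inner resolutions $Y_\beta^\bullet\to L_\beta$ to be \emph{functorial} in $\beta$; the bar resolutions over $I_\beta$ are functorial but infinite, whereas the truncation that makes them finite involves a non-canonical choice of retraction, and there is no reason these retractions can be chosen compatibly along the chain $\aleph_n$. Even if you sidestep this by truncating only the outer bar complex first, its last term $S$ is merely a direct summand of some $\bigoplus L_{\beta_\gamma}$, and a summand of an object with a length-$n$ $\Add\{M_i\}$-resolution need not itself admit one: that would require a horseshoe/lifting property which an arbitrary additive class $\Add\{M_i\}$ does not possess. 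So the final term of your totalisation may fail to lie in $\Add\{M_i\}$, and what you flag as the ``principal technical hurdle'' is in fact a substantive obstruction rather than bookkeeping. The paper's one-shot argument avoids both issues because the only splitting needed is of a single syzygy inside a single bar term.
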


\begin{proof}
Consider the canonical presentation of $\varinjlim M_i$:
$$
\dotsb \overset{\delta_2}\to
\bigoplus_{i_0 < i_1 < i_2} M_{i_0 i_1 i_2} \overset{\delta_1}\to
\bigoplus_{i_0 < i_1} M_{i_0 i_1} \overset{\delta_0}\to
\bigoplus_{i_0 \in I} M_{i_0} \to \varinjlim M_i \to 0,
$$
where $M_{i_0 i_1 \dots i_k} = M_{i_0}$ for all $k$-tuples $i_0 < i_1 < \dotsb < i_k$ of elements of $I$. This is an exact sequence and it follows from~\cite{J} that
$$ {\varprojlim}^k \Hom_R(M_i,Y) = \Ker \Hom_R(\delta_k,Y) / \Img \Hom_R(\delta_{k-1},Y) $$
for any $R$-module $Y$ and any $k \geq 0$ (we let $\delta_{-1} = 0$ here). If we take the smallest $n$ \st $\vert I \vert \leq \aleph_n$ and $Y = \Ker \delta_n$, it follows from Lemma~\ref{lem:cdim} that the inclusion
$$ 0 \to \Ker \delta_n \to \bigoplus_{i_0 < i_1 < \dotsb < i_{n+1}} M_{i_0 i_1 \dots i_{n+1}} $$
splits since $\varprojlim^{n+2} \Hom_R(M_i,Y) = 0$ in this case. The claim of the lemma follows immediately.
\end{proof}

\begin{cor} \label{cor:lim_cl}
Let $R$ be a quasi-Frobenius ring and let $\underline{\mathcal A}$ be a localizing subcategory of $\stModR$. Assume that $(M_i)_{i \in I}$ is a direct system in $\ModR$ \st $\vert I \vert < \aleph_\omega$ and $M_i$ is an object of $\underline{\mathcal A}$ for each $i \in I$. Then also $\varinjlim M_i$ is an object of $\underline{\mathcal A}$.
\end{cor}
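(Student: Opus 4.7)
The plan is to turn the direct limit, which is an infinite operation in $\ModR$ and hence not immediately manageable inside the triangulated subcategory $\underline{\mathcal A}$, into a finite construction by invoking Lemma~\ref{lem:lim_pres}. The hypothesis $|I|<\aleph_\omega$ is there precisely to make that lemma applicable: it yields an exact sequence
$$0 \to X_n \to X_{n-1} \to \dots \to X_1 \to X_0 \to \varinjlim M_i \to 0$$
in $\ModR$ of finite length, with $X_j \in \Add\{M_i \mid i\in I\}$ for every $j$. Once we have such a finite resolution, we can hope to process it step-by-step using the triangle axioms inside $\stModR$.

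First I would verify that each $X_j$ defines an object of $\underline{\mathcal A}$. Since $R$ is quasi-Frobenius, coproducts in $\stModR$ are induced by direct sums in $\ModR$, so the coproduct-closure of the localizing subcategory gives $\bigoplus_{i\in I} M_i \in \underline{\mathcal A}$. Localizing subcategories of a triangulated category with coproducts are automatically thick (an Eilenberg-swindle argument on the cone of a direct summand projection shows closure under direct summands). Since $X_j$ is a direct summand of $\bigoplus_i M_i$ in $\ModR$, it is \emph{a fortiori} a direct summand in $\stModR$, and therefore $X_j\in \underline{\mathcal A}$.

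Next, I would break the above finite exact sequence into short exact sequences
$$0 \to Y_{k+1} \to X_k \to Y_k \to 0, \qquad k=0,1,\dots,n-1,$$
with the convention $Y_n=X_n$ and $Y_0 = \varinjlim M_i$. Since $\ModR$ is a Frobenius category (projective and injective modules coincide over a quasi-Frobenius ring) whose associated stable category is precisely $\stModR$, each such short exact sequence yields a distinguished triangle $Y_{k+1} \to X_k \to Y_k \to Y_{k+1}[1]$ in $\stModR$. As $\underline{\mathcal A}$ is a triangulated subcategory, two-out-of-three on the vertices of such a triangle forces the third to lie in $\underline{\mathcal A}$ as well. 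Starting from $Y_n=X_n\in \underline{\mathcal A}$, one descends through the short sequences to obtain $Y_{n-1}, Y_{n-2}, \dots, Y_0 = \varinjlim M_i$, all in $\underline{\mathcal A}$.

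I do not expect any genuine obstacle in carrying out this plan: all of the essential work is done by Lemma~\ref{lem:lim_pres}, which replaces the unruly infinite direct limit by a bounded-length presentation involving only members of $\Add\{M_i\}$. The remaining ingredients---coproduct and summand closure of $\underline{\mathcal A}$, and the translation of short exact sequences into triangles via the Frobenius structure of $\ModR$---are purely formal. The only small point worth double-checking is that the length $n$ produced by Lemma~\ref{lem:lim_pres} is indeed finite under the assumption $|I|<\aleph_\omega$, which is guaranteed by Lemma~\ref{lem:cdim}.
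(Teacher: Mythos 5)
Your proposal is correct and follows essentially the same route as the paper: apply Lemma~\ref{lem:lim_pres} to get a finite resolution by objects of $\Add\{M_i\}$, use closure of localizing subcategories under coproducts and direct summands, and convert the resulting short exact sequences into triangles in $\stModR$. The only negligible imprecision is that $X_j$ is a summand of a direct sum of the $M_i$ possibly with repetitions, not necessarily of $\bigoplus_{i\in I}M_i$ itself, which changes nothing.
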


\begin{proof}
Note that any localizing subcategory is closed under direct summands~\cite{BN}. Then the claim follows immediately from the preceding lemma when taking into account that triangles in $\stModR$ correspond to short exact sequences in $\ModR$ and that the canonical functor $\ModR \to \stModR$ preserves coproducts.
\end{proof}

Now we are in a position to prove the theorem.

\begin{proof}[Proof of Theorem~\ref{thm:main_triang}]
Let $\mathcal{\underline A}$ be a smashing subcategory of $\T = \stModR$ and let $(\mathcal A,\mathcal B)$ be the corresponding projective cotorsion pair in $\ModR$ with $\mathcal B$ closed under direct limits given by Theorem~\ref{thm:triang_to_cot}. Then by Theorem~\ref{thm:main}, there is a set $\mathcal S$ of countably generated $R$-modules that generates the cotorsion pair.

Let us denote by $\mathcal L$ the localizing subcategory of $\T$ generated by $\mathcal S$, viewed as set of (countable) objects of $\T$. We claim that then for each $X \in \T$, there is a triangle $X \overset{w_X}\to B_X \to L_X \to X[1]$ in $\T$ \st $B_X \in \mathcal{\underline B}$ and $L_X \in \mathcal L$.

Let us assume for a moment that we have proved the claim and let $A \in \mathcal{\underline A}$. If we consider the shifted triangle $L_A[-1] \overset{f}\to A \overset{w_A}\to B_A \to L_A$, then clearly $w_A = 0$ and $f$ is split epi. Hence, $A$ is a direct summand of $L_A[-1]$ and consequently, since $\mathcal L$ is closed under direct summands by~\cite{BN}, $A \in \mathcal L$. Thus, $\mathcal{\underline A} = \mathcal L$ and the theorem follows.

Therefore, it remains to prove the claim. Let $X \in\mathcal T$. If we view $X$ as an $R$-module, we can construct a special $\mathcal B$-preenvelope $0 \to X \to B_X \to L_X \to 0$ following the lines of~\cite[Theorem 3.2.1]{GT}: We construct a well-ordered continuous chain
$$
B_0 \subseteq B_1 \subseteq B_2 \subseteq \dotsb \subseteq B_\alpha \subseteq \dotsb
$$
indexed by ordinal numbers \st $B_0 = X$ and $B_{\alpha+1}$ is a \emph{universal extension} of $B_\alpha$ by modules from $\mathcal S$. That is, there is an exact sequence of the form:
$$ 0 \to B_\alpha \to B_{\alpha+1} \to \bigoplus_{j \in J_\alpha} Y_j \to 0, $$
where $Y_j$ is isomorphic to a module from $\mathcal S$ for each $j \in J_\alpha$ and the connecting homomorphisms $\delta_Z: \Hom_R(Z,\bigoplus_{j \in J} Y_j) \to \Ext^1_R(Z,B_\alpha)$ are surjective for all $Z \in \mathcal S$. In particular, $\Ext^1_R(Z,-)$ applied on $B_\alpha \subseteq B_\beta$ for any $\alpha<\beta$ gives the zero map. Since all the modules in $\mathcal S$ are countably presented, any morphism $\Omega(Z) \to B_{\aleph_1}$ in $\ModR$, where $Z \in \mathcal S$, factors through the inclusion $B_\alpha \subseteq B_{\aleph_1}$ for some $\alpha<\aleph_1$. It follows that $\Ext^1_R(Z,B_{\aleph_1}) = 0$ for each $Z \in \mathcal S$; hence $B_{\aleph_1} \in \mathcal B$. Now, if we set $L_\alpha = B_\alpha/X$ for each $\alpha$, we have a well-ordered continuous chain
$$
L_0 \subseteq L_1 \subseteq L_2 \subseteq \dotsb \subseteq L_\alpha \subseteq \dotsb
$$
\st $L_{\alpha+1}/L_\alpha \cong B_{\alpha+1}/B_\alpha \in \Add\,\mathcal S$. It follows from Eklof's Lemma (\cite[Lemma 3.1.2]{GT} or \cite[Lemma 1]{ET}) that $L_\alpha \in \mathcal A$ for each ordinal $\alpha$. Hence, $0 \to X \to B_{\aleph_1} \to L_{\aleph_1} \to 0$ is a special $\mathcal B$-preenvelope of $X$.

Now let us focus on the corresponding triangle $X \to B_{\aleph_1} \to L_{\aleph_1} \to X[1]$ in $\T$. Clearly $B_{\aleph_1} \in \mathcal{\underline B}$. Moreover, it follows by a straightforward transfinite induction on $\alpha$ that $L_\alpha \in \mathcal L$ for each $\alpha \leq \aleph_1$. For $\alpha=0$, obviously $L_0 = 0 \in \mathcal L$. To pass from $\alpha$ to $\alpha+1$, we use the fact that the third term in the triangle $L_\alpha \to L_{\alpha+1} \to \coprod_{j \in J_\alpha} Y_j \to L_\alpha[1]$ is in $\Add\,\mathcal S$. Finally, limit steps are taken care of by Corollary~\ref{cor:lim_cl}. The claim is proved and so is the theorem.
\end{proof}

Inspired by Theorem~\ref{thm:main_triang}, we can ask the following question:

\medskip

\noindent
{\bf Question (Countable Telescope Conjecture).} Let $\T$ be an arbitrary compactly generated triangulated category. Is every smashing localizing subcategory of $\T$ generated by a set of countable objects?\footnote{An affirmative and far more general answer to this question was given by Krause in~\cite[\S 7.4]{K3} after submission of this paper.}

\medskip

In this context, it is a natural question if one can characterize the countable objects in a smashing subcategory of a triangulated category. That is, we are looking for a triangulated category analogue of Theorem~\ref{thm:char_A}. It turns out that there is an analogous statement that holds for any compactly generated triangulated category.

\begin{thm} \label{thm:char_A_triang}
Let $\T$ be a compactly generated triangulated category and let $\mathcal X$ be a smashing subcategory of $\T$. Denote by $\mathfrak I$ the ideal of all morphisms between compact objects which factor through some object in $\mathcal X$. Then the following are equivalent for a countable object $X \in \T$:
\begin{enumerate}
\item $X \in \mathcal X$,
\item $X$ is the homotopy colimit of a countable direct system $(X_n,\varphi_n)$ of compact objects \st $\varphi_n \in \mathfrak I$ for every $n$.
\end{enumerate}
\end{thm}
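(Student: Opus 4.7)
My plan is to mirror closely the argument for Theorem~\ref{thm:char_A}, translating $\Ext^1$ to $\Hom_\T$ and exploiting two ingredients that are available in the triangulated setting: a localization triangle $A\to T\to B\to A[1]$ with $A\in\mathcal X$ and $B\in\mathcal Y$ (coming from $\mathcal X$ being strictly localizing), and the Milnor exact sequence relating $\Hom_\T(\hocolim X_n,-)$ to $\varprojlim$ and $\varprojlim^1$ of $\Hom_\T(X_n,-)$.

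For $(2)\Rightarrow(1)$, the argument is formal. Writing each $\varphi_n$ as a composite $X_n\overset{u_n}\to A_n\overset{v_n}\to X_{n+1}$ with $A_n\in\mathcal X$ and interleaving, one obtains a sequence
$$X_0\overset{u_0}\to A_0\overset{v_0}\to X_1\overset{u_1}\to A_1\overset{v_1}\to X_2\to\dotsb$$
in which both $(X_n)$ and $(A_n)$ are cofinal subsequences. Hence $X\cong\hocolim_n A_n$, and since $\mathcal X$ is localizing (closed under coproducts and triangles), the defining triangle of $\hocolim_n A_n$ forces $X\in\mathcal X$.

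For $(1)\Rightarrow(2)$, start from any presentation $X=\hocolim_n X_n$ with $X_n$ compact, fix $Y\in\mathcal Y$, and consider $H_n=\Hom_\T(X_n,Y)$. Smashing gives $\coprod_\omega Y\in\mathcal Y$, hence $\Hom_\T(X,\coprod_\omega Y)=0$; the Milnor sequence then yields $\varprojlim^1 \Hom_\T(X_n,\coprod_\omega Y)=0$, and compactness of the $X_n$ turns this into $\varprojlim^1 H_n^{(\omega)}=0$, so $(H_n)$ is Mittag-Leffler by Proposition~\ref{prop:ML}(2). The Milnor sequence for $\Hom_\T(X,Y)=0$ also gives $\varprojlim H_n=0$, and Lemma~\ref{lem:t-nil} upgrades Mittag-Leffler + $\varprojlim=0$ to T-nilpotence of $(H_n)$. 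Now imitate the selection argument in Theorem~\ref{thm:char_A}: inductively construct $n_0<n_1<\dotsb$ such that the composite $\psi_i=\varphi_{n_{i+1}-1}\circ\dotsb\circ\varphi_{n_i}$ satisfies $\Hom_\T(\psi_i,Y)=0$ for every $Y\in\mathcal Y$. If no admissible $n_{i+1}$ exists, pick for each $l>n_i$ a witness $Y_l\in\mathcal Y$ on which the corresponding composite is nonzero; then $Y=\coprod_l Y_l\in\mathcal Y$ (smashing) and $\Hom_\T(X_m,Y)=\bigoplus_l\Hom_\T(X_m,Y_l)$ (compactness) together show that $(\Hom_\T(X_n,Y))$ is not T-nilpotent at $n_i$, a contradiction. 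Finally, apply the localization triangle $A_i\to X_{n_{i+1}}\to B_i\to A_i[1]$ with $A_i\in\mathcal X$ and $B_i\in\mathcal Y$: since $\Hom_\T(\psi_i,B_i)=0$, the composite $X_{n_i}\to X_{n_{i+1}}\to B_i$ vanishes and $\psi_i$ factors through $A_i\in\mathcal X$, i.e.\ $\psi_i\in\mathfrak I$. Cofinality gives $X=\hocolim_i X_{n_i}$, completing the proof.

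The main obstacle is really the clean passage from the vanishing of one group $\Hom_\T(X,Y)$ to the strong chain condition T-nilpotence on $(\Hom_\T(X_n,Y))$; this is where compactness of the $X_n$ and the smashing assumption on $\mathcal X$ are both essential, since we need to replace $Y$ first by $\coprod_\omega Y$ (to invoke Proposition~\ref{prop:ML}(2) for Mittag-Leffler) and later by $\coprod_l Y_l$ (to upgrade non-factorability simultaneously over all $Y_l$). The factorization step itself, which plays the role of Lemma~\ref{lem:ext_zero_map}, is by contrast almost immediate from the existence of the localization triangle provided by a strictly localizing subcategory.
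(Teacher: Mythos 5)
Your proof is correct, and for the substantive direction $(1)\Rightarrow(2)$ it follows essentially the same route as the paper: extract Mittag--Leffler from $\varprojlim^1\Hom_\T(X_n,Y)^{(\omega)}=0$ via Proposition~\ref{prop:ML}(2), upgrade to T-nilpotence via Lemma~\ref{lem:t-nil}, and run the same coproduct-of-witnesses selection argument as in Theorem~\ref{thm:char_A} to get uniform bounds over all $\mathcal X$-local $Y$. You deviate from the paper in two places, both legitimately. First, for the factorization step the paper simply cites Krause's result that a morphism killed by $\Hom_\T(-,Z)$ for all $\mathcal X$-local $Z$ factors through $\mathcal X$; your direct argument via the localization triangle $A_i\to X_{n_{i+1}}\to B_i\to A_i[1]$ (which exists because smashing implies strictly localizing) is a clean self-contained substitute, and in fact only needs vanishing against the single local object $B_i$. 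Second, for $(2)\Rightarrow(1)$ the paper computes $\varprojlim$ and $\varprojlim^1$ of $\Hom_\T(X_n,Z)$ for local $Z$ and concludes $\Hom_\T(X,Z)=0=\Hom_\T(X[1],Z)$, hence $X\in\mathcal X$ via the localization triangle; your interleaving argument instead realizes $X$ directly as $\hocolim A_n$ with $A_n\in\mathcal X$ (using invariance of homotopy colimits under cofinal subsequences, \cite[Lemma 1.7.1]{N3}) and reads membership in $\mathcal X$ off the defining triangle. Your version is more economical in that it uses only that $\mathcal X$ is localizing, not the $\mathcal X$-local objects at all; the paper's version has the mild advantage of not needing the cofinality lemma at this point. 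Both are complete.
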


\begin{proof}
$(1) \implies (2)$. Since $X$ is countable, we have $X = \hocolim Y_n$ where $(Y_n, \psi_n)$ is a direct system of compact objects (not necessarily from $\mathcal X$). Let $Z$ be an $\mathcal X$-local object and let $\tilde Z = \coprod_{i<\omega} Z_i$, where $Z_i = Z$ for each $i<\omega$. By assumption, $\tilde Z$ is also $\mathcal X$-local. If we apply $\Hom_\T(-,\tilde Z)$ on the triangle $\coprod_n Y_n \overset{\Phi}\to \coprod_n Y_n \to X\to \coprod_n Y_n[1]$, we see that $\Hom_\T(\Phi,\tilde Z)$ is an isomorphism. Hence we get:
$$\varprojlim \Hom_\T(Y_n,\tilde Z) = 0 = {\varprojlim}^1 \Hom_\T(Y_n,\tilde Z) .$$
Note also that $\Hom_\T(Y_n,\tilde Z)$ is canonically isomorphic to $\Hom_\T(Y_n,Z)^{(\omega)}$ for each $n<\omega$ since all the $Y_n$ are compact. Consequently, the inverse system
$$ (\Hom_\T(Y_n,Z), \Hom_\T(\psi_n,Z))_{n<\omega} $$
is Mittag-Leffler by Proposition~\ref{prop:ML} and T-nilpotent by Lemma~\ref{lem:t-nil}. Since the class of all $\mathcal X$-local objects is closed under coproducts, we infer, as in the proof of Theorem~\ref{thm:char_A}, that there are some bounds for T-nilpotency common for all $\mathcal X$-local objects $Z$. In other words, there is a cofinal subsystem $(Y_{n_k}, \varphi_k \mid k<\omega)$ of the direct system $(Y_n, \psi_n)$ \st $\Hom_\T(\varphi_k,Z) = 0$ for all $k<\omega$ and $\mathcal X$-local objects $Z$. Note that $X \cong \hocolim_k Y_{n_k}$ since the homotopy colimit does not change when passing to a cofinal subsystem, \cite[Lemma 1.7.1]{N3}.

Finally, if $\varphi$ is a morphism in $\T$ \st $\Hom_\T(\varphi,Z) = 0$ whenever $Z$ is $\mathcal X$-local, then $\varphi$ factors through an object in $\mathcal X$ by~\cite[Lemmas 3.4 and 3.8]{K}. Hence, $\varphi_k \in \mathfrak I$ for each $k$ and we can just put $X_k = Y_{n_k}$.

$(2) \implies (1)$. If $X$ and $(X_n,\varphi_n)$ are as in the assumption, then, by Lemma~\ref{lem:t-nil},
$$
\varprojlim \Hom_\T(X_n,Z) = 0 = {\varprojlim}^1 \Hom_\T(X_n,Z)
$$
whenever $Z$ is $\mathcal X$-local. Thus, if we consider the triangle $\coprod_n X_n \overset{\Phi}\to \coprod_n X_n \to X\to \coprod_n X_n[1]$ defining $X$, then $\Hom_\T(\Phi,Z)$ is an isomorphism. For a similar reason, $\Hom_\T(\Phi[1],Z)$ is an isomorphism, and consequently $\Hom_\T(X,Z)=0$ for all $\mathcal X$-local objects $Z$. In other words: $X \in \mathcal X$.
\end{proof}

Triangulated category analogues of Theorems~\ref{thm:char_B} and~\ref{thm:single_tcmc}, the remaining main results of this paper, have been proved by Krause in~\cite{K}. We include the corresponding statements from~\cite{K} here to underline how straightforward the translation is. Let us start with Theorem~\ref{thm:char_B}---actually, \cite[Theorem A]{K} served as an inspiration for it:

\begin{thm} \label{thm:char_B_triang} \cite[Theorem A]{K}
Let $\T$ be a compactly generated triangulated category and let $\mathcal X$ be a smashing subcategory of $\T$. Denote by $\mathfrak I$ the ideal of all morphisms between compact objects which factor through some object in $\mathcal X$. Then the following are equivalent for $Y \in \T$:
\begin{enumerate}
\item $Y$ is $\mathcal X$-local,
\item $\Hom_\T(f,Y) = 0$ for each $f \in \mathfrak I$.
\end{enumerate}
\end{thm}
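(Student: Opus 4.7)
The implication $(1)\Rightarrow(2)$ is routine: if $Y$ is $\mathcal X$-local and $f\in\mathfrak I$ factors as $C\xrightarrow{g} X\xrightarrow{h} C'$ with $X\in\mathcal X$, then $\Hom_\T(f,Y)=\Hom_\T(g,Y)\circ\Hom_\T(h,Y)$ factors through $\Hom_\T(X,Y)=0$. So the real content is $(2)\Rightarrow(1)$, and the plan is to follow closely the architecture of the proof of Theorem~\ref{thm:char_B}.

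Let $\mathcal C=\{Y\in\T\mid \Hom_\T(f,Y)=0\text{ for every }f\in\mathfrak I\}$. I would first record the formal closure properties of $\mathcal C$: it is closed under arbitrary products and coproducts (as $\Hom$ commutes with these in the appropriate variable), under retracts and shifts, and it has the two-out-of-three property in triangles (via the long exact sequence). Dually, the class $\mathcal L_Y=\{X\in\T\mid \Hom_\T(X,Y)=0\}$, for any fixed $Y$, is a \emph{localizing} subcategory of $\T$ since it is closed under coproducts and triangles. Hence, to prove that $Y\in\mathcal C$ implies $Y$ is $\mathcal X$-local, it suffices to exhibit a set of generators of $\mathcal X$, as a localizing subcategory, that is contained in $\mathcal L_Y$.

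The main step is therefore to check the vanishing $\Hom_\T(X,Y)=0$ for all $X$ in a suitable generating set. Using Theorem~\ref{thm:main_triang} (or its general-category extension pointed to in the footnote after Theorem~\ref{thm:main_triang}), the smashing subcategory $\mathcal X$ is generated, as a localizing subcategory, by its countable objects, so one may assume $X$ is countable. By Theorem~\ref{thm:char_A_triang}, such an $X$ is a homotopy colimit $\hocolim X_n$ of compact objects with connecting morphisms $\varphi_n\in\mathfrak I$. Applying $\Hom_\T(-,Y)$ to the defining triangle
$$ \coprod_n X_n \overset{\Phi}\to \coprod_n X_n \to X \to \coprod_n X_n[1] $$
produces the exact sequence expressing $\Hom_\T(X,Y)$ via $\varprojlim$ and $\varprojlim^1$ of the inverse system $(\Hom_\T(X_n,Y),\Hom_\T(\varphi_n,Y))_{n<\omega}$. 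Since $Y\in\mathcal C$, each connecting map $\Hom_\T(\varphi_n,Y)$ vanishes, so the inverse system is trivially T-nilpotent; by Lemma~\ref{lem:t-nil} and Proposition~\ref{prop:ML} both $\varprojlim\Hom_\T(X_n,Y)=0$ and $\varprojlim^1\Hom_\T(X_n,Y)=0$. The exact sequence then forces $\Hom_\T(X,Y)=0$, which is what we needed.

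The main obstacle I foresee is precisely the appeal to the fact that the smashing $\mathcal X$ is generated by countable objects in the general compactly generated setting: in the body of the paper this is only proved for stable module categories of self-injective artin algebras (Theorem~\ref{thm:main_triang}), and the general statement lies beyond the present text. The alternative, pursued by Krause in~\cite{K}, is to bypass this by working directly with the ideal $\mathfrak I$ and its Bousfield-style completion—\cite[Lemmas~3.4 and 3.8]{K} already supply the key fact that $\mathfrak I$ detects $\mathcal X$-locality, and then a transfinite filtration argument using the closure properties of $\mathcal C$ delivers the theorem. Either route gives the result; the one above is closer in spirit to the module-theoretic proof of Theorem~\ref{thm:char_B}, which is the point we want to emphasise.
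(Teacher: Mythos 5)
The first thing to say is that the paper does not prove this statement at all: Theorem~\ref{thm:char_B_triang} is quoted verbatim from Krause as \cite[Theorem~A]{K}, precisely to exhibit it as the triangulated-category template for Theorem~\ref{thm:char_B}. So any proof you give is, by construction, a different route from the paper's. Your reduction is structurally sound: $(1)\Rightarrow(2)$ is correct; $\mathcal L_Y=\Ker\Hom_\T(-,Y)$ is indeed localizing, so it suffices to put a generating set of $\mathcal X$ inside it; and for a countable object $X\in\mathcal X$ written via Theorem~\ref{thm:char_A_triang} as $\hocolim X_n$ with $\varphi_n\in\mathfrak I$, the hypothesis $\Hom_\T(\varphi_n,Y)=0$ kills both $\varprojlim$ and $\varprojlim^1$ in the Milnor sequence. (One small omission: the Milnor sequence involves $\varprojlim^1$ of the \emph{shifted} system $\Hom_\T(X_n[1],Y)$; this is harmless because $\mathfrak I$ is closed under shifts, since $\mathcal X$ and $\Tcomp$ are, but you should say so.)

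The genuine gap is the one you flag yourself and do not close: the claim that a smashing subcategory of an arbitrary compactly generated $\T$ is generated by countable objects. Within this paper that statement is available only for $\T=\stModR$ over a self-injective artin algebra (Theorem~\ref{thm:main_triang}, whose proof goes through the module-theoretic Theorem~\ref{thm:main} and the Krause--Solberg correspondence); for general $\T$ it is exactly the open Question posed after Theorem~\ref{thm:main_triang}, settled only later in \cite[\S 7.4]{K3}. It is not a lemma you may quietly invoke, and it is not obviously easier than the theorem you are proving: Krause's own proof of Theorem~A in \cite{K} deliberately avoids it, working instead with an idempotent completion of the ideal $\mathfrak I$ and Brown representability. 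Your closing sentence gestures at that alternative (``a transfinite filtration argument\dots delivers the theorem''), but that is a placeholder, not an argument. So the honest status of your proposal is: a complete and genuinely instructive proof for stable module categories of self-injective artin algebras, and a conditional proof in general, contingent on the countable telescope conjecture for triangulated categories. What your route buys, where it applies, is a proof of Krause's Theorem~A that runs exactly parallel to Theorem~\ref{thm:char_B}, which is a worthwhile observation---but it should be presented with that restriction made explicit rather than as a proof of the theorem as stated.
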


We conclude the paper with an analogue of Theorem~\ref{thm:single_tcmc}. Let us first recall that one defines pure-injective objects in a compactly generated triangulated category $\T$ as follows (see~\cite{K}): Let us call a morphism $X \to Y$ in $\T$ a \emph{pure monomorphism}
if the induced map $\Hom_\T(C,X) \to \Hom_\T(C,Y)$ is a monomorphism for every compact objects $C$. An object $X$ is then called \emph{pure-injective} if every pure monomorphism $X \to Y$ splits. As for module categories, the isomorphism classes of indecomposable pure-injective objects form a set which we call a spectrum of $\T$. The following has been proved in~\cite{K}:

\begin{thm} \label{thm:single_triang} \cite[Theorem C]{K}
Let $\T$ be a compactly generated triangulated category and let $\mathcal X$ be a smashing subcategory of $\T$. Then $X \in \mathcal X$ \iff $\Hom_\T(X,Y) = 0$ for each indecomposable pure-injective $\mathcal X$-local object $Y$.
\end{thm}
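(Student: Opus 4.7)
The ``only if'' direction follows directly from the definition of an $\mathcal X$-local object. For the converse I would first reduce to testing orthogonality against \emph{every} $\mathcal X$-local object. Since $\mathcal X$ is strictly localizing, each $X \in \T$ fits into a triangle $X' \to X \to Y_X \to X'[1]$ with $X' \in \mathcal X$ and $Y_X \in \mathcal Y$; the assumption $\Hom_\T(X, Y_X) = 0$ makes the middle map vanish, and applying $\Hom_\T(X,-)$ to the triangle then shows that $X' \to X$ is a split epimorphism. Hence $X$ is a direct summand of an object of $\mathcal X$ and so lies in $\mathcal X$ itself. Thus it suffices to show that orthogonality on indecomposable pure-injective $\mathcal X$-local objects propagates to all of $\mathcal Y$.

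The next step invokes Theorem~\ref{thm:char_B_triang}: the class $\mathcal Y$ is cut out by the vanishing of the coherent functors $\Hom_\T(f,-)$ for $f \in \mathfrak I$. Since $\mathfrak I$ consists of morphisms between compact objects, this description immediately yields that $\mathcal Y$ is closed under products, coproducts (by smashing), and pure subobjects: if $Y \hookrightarrow Y'$ is a pure monomorphism and $Y' \in \mathcal Y$, then for any $f \colon C \to C'$ in $\mathfrak I$ and any $g \colon C' \to Y$ the composite $g \circ f$ vanishes after the pure embedding, hence already in $Y$. Combined with the existence of a pure monomorphism from any object of $\T$ into its pure-injective hull (a standard feature of compactly generated triangulated categories, cf.~\cite{K}), this makes $\mathcal Y$ \emph{determined by its pure-injective objects}: an object $Y$ belongs to $\mathcal Y$ iff its pure-injective hull does.

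The remaining ingredient is the triangulated analogue of Fischer's decomposition theorem used in Proposition~\ref{prop:single}, due to Krause in~\cite{K}: every pure-injective object of $\T$ is a direct summand of a product of indecomposable pure-injectives. Applied to an arbitrary pure-injective $Y \in \mathcal Y$ and combined with closure of $\mathcal Y$ under products and direct summands, this exhibits $Y$ as a summand of a product of indecomposable pure-injectives from $\mathcal Y$. Since $\Hom_\T(X,-)$ preserves products and summands, the hypothesis propagates from indecomposable pure-injective $\mathcal X$-local objects to every pure-injective object of $\mathcal Y$, and by the previous paragraph to every $Y \in \mathcal Y$; by the first paragraph this forces $X \in \mathcal X$. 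The main obstacle is the Fischer-type decomposition in the triangulated setting, which requires nontrivial pure-injective model theory but is available from~\cite{K}; once in hand, the argument is a faithful triangulated translation of the module-theoretic Proposition~\ref{prop:single}.
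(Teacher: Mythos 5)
The paper offers no proof of this statement: it is quoted verbatim from Krause as \cite[Theorem C]{K}, so there is no internal argument to compare against, and your sketch must be judged on its own. Your first reduction is correct (the localization triangle $X' \to X \to Y_X \to X'[1]$ together with $\Hom_\T(X,Y_X)=0$ splits $X$ off $X'$, and localizing subcategories are closed under direct summands by \cite{BN}), and so is the closure of $\mathcal Y$ under products and pure subobjects via Theorem~\ref{thm:char_B_triang}. But the final propagation step contains a genuine gap. From the hypothesis you obtain $\Hom_\T(X,N)=0$ for every pure-injective $N\in\mathcal Y$ that is a summand of a product of indecomposables from $\mathcal Y$; you then pass ``by the previous paragraph to every $Y\in\mathcal Y$''. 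That paragraph, however, only shows that \emph{membership} in $\mathcal Y$ is detected by pure-injective hulls. It does not show that $\Hom_\T(X,PE(Y))=0$ implies $\Hom_\T(X,Y)=0$: a pure monomorphism $Y\to PE(Y)$ makes $\Hom_\T(C,Y)\to\Hom_\T(C,PE(Y))$ injective only for \emph{compact} $C$, and your $X$ is not compact, so the vanishing does not descend from $PE(Y)$ to $Y$. In particular you never reach the specific object $Y_X$ that your first paragraph requires.

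The gap is repairable, but only by reusing the adjunction you already have. For every $Y\in\mathcal Y$ the localization triangle gives $\Hom_\T(X,Y)\cong\Hom_\T(Y_X,Y)$ (apply $\Hom_\T(-,Y)$ and kill the outer terms $\Hom_\T(X',Y)$ and $\Hom_\T(X'[1],Y)$). Now apply the hypothesis to $Y=PE(Y_X)$: if $\Hom_\T(X,PE(Y_X))=0$, then the pure monomorphism $Y_X\to PE(Y_X)$ is the zero map, which forces $Y_X=0$ (test against compact objects) and hence $X\cong X'\in\mathcal X$. Even in this repaired form the argument still needs two nontrivial inputs from \cite{K} that your sketch asserts but does not derive from the stated closure properties: that $\mathcal Y$ is closed under pure-injective hulls (closure under pure subobjects gives only the converse implication), and that every pure-injective object of $\mathcal Y$ is a summand of a product of indecomposable pure-injectives \emph{belonging to} $\mathcal Y$. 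Both rest on the identification of $\mathcal Y$ with a definable subcategory, i.e.\ a closed subset of the Ziegler spectrum, through the functor category over $\Tcomp$ --- which is precisely the hard part of Krause's proof and cannot be waved through as a formal consequence of Theorem~\ref{thm:char_B_triang}.
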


For stable module categories over self-injective artin algebras, the correspondence via Theorem~\ref{thm:triang_to_cot} works especially well because of the following result from \cite{K}:

\begin{prop} \label{prop:pi_triang} \cite[Proposition 1.16]{K}
Let $R$ be a quasi-Frobenius ring and $X$ be a right $R$-module. Then $X$ is a pure-injective module \iff $X$ is a pure-injective object in $\stModR$.
\end{prop}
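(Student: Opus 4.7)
The plan is to translate between pure-exactness in $\ModR$ and the triangulated pure-monomorphism condition in $\stModR$ via the standard identification $\Ext^1_R(C,M) \cong \Hom_{\stModR}(C, M[1])$, valid for every finitely generated $C$. This is obtained by applying $\Hom_R(C,-)$ to $0 \to M \to I \to M[1] \to 0$ with $I$ injective, and observing that maps $C \to M[1]$ factoring through $I$ are exactly those that vanish in $\stModR$ (since over a quasi-Frobenius ring, injective = projective). Recall that, as $R$ is noetherian, the compact objects of $\stModR$ are precisely the finitely generated modules.

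For the direction ``pure-injective module $\Rightarrow$ pure-injective in $\stModR$'', I would take a pure monomorphism $\underline f : \underline X \to \underline Y$ in $\stModR$, represent it by $f : X \to Y$ in $\ModR$, and, by replacing $Y$ with $Y \oplus I(X)$ and $f$ by $(f,\iota)$ where $\iota$ is the injective envelope, assume $f$ is a genuine monomorphism without altering $\underline f$. Form $0 \to X \xrightarrow{f} Y \xrightarrow{y} Z \to 0$. I will show this sequence is pure in $\ModR$: given finitely presented $C$ and $g : C \to Z$, the pure-monomorphism hypothesis applied to the compact object $C[-1]$ forces, through the long exact sequence for the triangle $\underline X \to \underline Y \to \underline Z \to \underline X[1]$, the stable class of $g$ to vanish in $\Hom_{\stModR}(C, \underline X[1])$, hence to lift to some $\tilde g : C \to Y$ in $\stModR$. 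The discrepancy $y\tilde g - g : C \to Z$ factors through a projective $P$, and projectivity of $P$ lets one lift $P \to Z$ along $y$ and then correct $\tilde g$ to an honest lift of $g$ in $\ModR$. Pure-exactness in $\ModR$ combined with the pure-injectivity of $X$ as a module splits the sequence in $\ModR$ and, a fortiori, splits $\underline f$ in $\stModR$.

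For the converse, let $0 \to X \xrightarrow{f} Y \to Z \to 0$ be pure-exact in $\ModR$. Pure-exactness yields $\Ext^1_R(C,X) \to \Ext^1_R(C,Y)$ injective for every finitely presented $C$, which via the displayed identification becomes $\Hom_{\stModR}(C, X[1]) \to \Hom_{\stModR}(C, Y[1])$ injective; hence $\underline f[1]$, equivalently $\underline f$, is a pure monomorphism in $\stModR$. The hypothesis then provides a retraction $r : Y \to X$ in $\ModR$ with $rf - \mathrm{id}_X$ factoring through an injective $I$, say $rf - \mathrm{id}_X = h_2 h_1$ with $h_1 : X \to I$ and $h_2 : I \to X$. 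Injectivity of $I$ extends $h_1$ along the monomorphism $f$ to some $\tilde h_1 : Y \to I$, and then $r' := r - h_2 \tilde h_1$ satisfies $r' f = \mathrm{id}_X$, giving a genuine retraction of $f$ in $\ModR$. So $X$ is pure-injective as a module.

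The delicate point is the upgrade from a stable retraction to an honest retraction in $\ModR$, which crucially exploits the quasi-Frobenius property to extend the ``stable noise'' $h_1$ along $f$ and absorb it; the rest reduces to a mechanical long-exact-sequence argument combined with the $\Ext^1$–$\Hom_{\stModR}$ comparison.
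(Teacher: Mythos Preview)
The paper does not give its own proof of this proposition; it is quoted from \cite[Proposition~1.16]{K} without argument, so there is nothing to compare against directly. Your proof is correct and self-contained. Both directions hinge on the identification $\Ext^1_R(C,M)\cong\Hom_{\stModR}(C,M[1])$ together with the fact that compact objects in $\stModR$ are closed under shifts, so that $\underline f$ is a pure monomorphism in $\stModR$ if and only if $\Hom_{\stModR}(C,\underline f[1])$ is injective for all compact $C$. The one genuinely delicate step---promoting a stable retraction $rf \equiv \mathrm{id}_X$ to an honest one by extending the projective/injective ``noise'' along the monomorphism $f$---is handled correctly and is exactly where the quasi-Frobenius hypothesis is used. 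This is essentially the standard argument one would expect to find in Krause's paper.
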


\medskip



\begin{thebibliography}{AST}

\bibitem{AR} J.\ Ad\'amek and J.\ Rosick\'y, 
	\textbf{Locally Presentable and Accessible Categories}, 
	London Math.\ Soc.\ Lect.\ Note Ser., Vol.\ 189,
	Cambridge Univ. Press, Cambridge, 1994.

\bibitem{ABH} L.\ Angeleri H\" ugel, S.\ Bazzoni and D.\ Herbera,
	\emph{A solution to the Baer splitting problem},
	Trans.\ Amer.\ Math.\ Soc.\ \textbf{360} (2008), no.\ 5, 2409--2421.

\bibitem{AHK} L.\ Angeleri H\" ugel, D.\ Happel and H.\ Krause,
	\textbf{Handbook of Tilting Theory},
	London Math.\ Soc.\ Lect.\ Note Ser., Vol.\ 332,
	Cambridge Univ.\ Press, 2007.

\bibitem{AST} L.\ Angeleri H\" ugel, J.\ \v Saroch and J.\ Trlifaj,
	\emph{On the telescope conjecture for module categories},
	J.\ Pure Appl.\ Algebra \textbf{212} (2008), 297--310.

\bibitem{AT} L.\ Angeleri H\" ugel and J.\ Trlifaj,
	\emph{Direct limits of modules of finite projective dimension},
	Rings, Modules, Algebras, and Abelian Groups, LNPAM \textbf{236} (2004), 27--44.

\bibitem{ARS} M.\ Auslander, I.\ Reiten and S.\ O.\ Smal\o{},
	\textbf{Representation theory of Artin algebras},
	Cambridge Studies in Advanced Mathematics 36,
	Cambridge University Press, Cambridge, 1995.

\bibitem{BET} S.\ Bazzoni, P.\ C.\ Eklof and J.\ Trlifaj,
	\emph{Tilting cotorsion pairs},
	Bull.\ London Math.\ Soc.\ \textbf{37} (2005), no.\ 5, 683--696.

\bibitem{BH} S.\ Bazzoni and D.\ Herbera,
	\emph{One dimensional tilting modules are of finite type},
	Algebr.\ Represent.\ Theory \textbf{11} (2008), no.\ 1, 43--61.

\bibitem{BS} S.\ Bazzoni and J.\ \v S\v tov\'\i\v cek,
	\emph{All tilting modules are of finite type},
	Proc.\ Amer.\ Math.\ Soc.\ \textbf{135} (2007), no.\ 12, 3771--3781.

\bibitem{AB} A.\ Beligiannis
	\emph{Cohen-Macaulay modules, (co)torsion pairs and virtually Gorenstein algebras},
	Journal of Algebra \textbf{288} (2005), 137--211.

\bibitem{BN} M.\ B\"okstedt and A.\ Neeman,
	\emph{Homotopy limits in triangulated categories},
	Compositio Math.\ \textbf{86} (1993), no.\ 2, 209--234.

\bibitem{Bo} A.\ K.\ Bousfield,
	\emph{The localization of spectra with respect to homology},
	Topology \textbf{18} (1979), no.\ 4, 257--281.

\bibitem{C} W.\ W.\ Crawley-Boevey,
	\emph{Infinite dimensional modules in the representation theory of finite dimensional algebras},
	in Algebras and Modules~I, Canad.\ Math.\ Soc.\ Conf.\ Proc., Vol.\ 23, AMS, Providence 1998, 29--54.

\bibitem{E} P.\ C.\ Eklof,
	\emph{Shelah's Singular Compactness Theorem},
	Publ.\ Mat.\ \textbf{52} (2008), no.\ 1, 3--18.

\bibitem{EM} P.\ C.\ Eklof and A.\ H.\ Mekler,
	\textbf{Almost Free Modules}, 2nd Ed.,
	North-Holland Math. Library, Elsevier, Amsterdam 2002.

\bibitem{ET} P.\ C.\ Eklof and J.\ Trlifaj,
	\emph{How to make Ext vanish},
	Bull.\ London Math.\ Soc.\ \textbf{33} (2001), no. 1, 41--51.

\bibitem{FL} L.\ Fuchs and S.\ B.\ Lee,
	\emph{From a single chain to a large family of submodules},
	Port.\ Math.\ (N.S.) \textbf{61} (2004), no.\ 2, 193--205.

\bibitem{GM} S.\ I.\ Gelfand and Y.\ I.\ Manin,
	\textbf{Methods of homological algebra},
	Second edition. Springer Monographs in Mathematics.
	Springer-Verlag, Berlin, 2003.

\bibitem{GT} R.\ G\"obel and J.\ Trlifaj,
	\textbf{Approximations and endomorphism algebras of modules},
	de Gruyter Expositions in Mathematics \textbf{41},
	Berlin-New York 2006.

\bibitem{G} A. Grothendieck,
	\textbf{\'El\'ements de g\'eom\'etrie alg\'ebrique. III. \'Etude cohomologique des faisceaux coh\'erents},
	Inst.\ Hautes \'Etudes Sci.\ Publ.\ Math.\ No.\ 11, 1961.

\bibitem{H} D.\ Happel,
	\textbf{Triangulated categories in the representation theory of finite-dimensional algebras},
	London Mathematical Society Lecture Note Series, 119. Cambridge University Press, Cambridge, 1988.

\bibitem{Hi} P.\ Hill,
	\emph{The third axiom of countability for abelian groups},
	Proc.\ Amer.\ Math.\ Soc.\ \textbf{82} (1981), 347--350.

\bibitem{J} C.\ U.\ Jensen,
	\textbf{Les foncteurs d\'eriv\'e de $\varprojlim$ et
	leurs applications en th\'eorie des modules},
	Lectures Notes in Math. \textbf{254},
	Springer Verlag, Berlin-New York 1972.

\bibitem{JL} C.\ U.\ Jensen and H.\ Lenzing,
	\textbf{Model Theoretic Algebra},
	Gordon and Breach S.\ Publishers, 1989.

\bibitem{Ke} B.\ Keller,
	\emph{Introduction to abelian and derived categories},
	Representations of reductive groups, 41--61,
	Publ.\ Newton Inst., Cambridge Univ.\ Press, Cambridge, 1998. 

\bibitem{Ke2} B.\ Keller,
	\emph{A remark on the generalized smashing conjecture},
	Manuscripta Math.\ \textbf{84} (1994), no.\ 2, 193--198.

\bibitem{K2} H.\ Krause,
	\emph{Cohomological quotients and smashing localizations},
	Amer.\ J.\ Math.\ \textbf{127} (2005), no.\ 6, 1191--1246.

\bibitem{K3} H.\ Krause,
	\emph{Localization for triangulated categories},
	preprint.

\bibitem{K} H.\ Krause,
	\emph{Smashing subcategories and the telescope conjecture---an
	algebraic approach},
	Invent.\ Math.\ \textbf{139} (2000), no.\ 1, 99--133.

\bibitem{KS} H.\ Krause and \O.\ Solberg,
	\emph{Applications of cotorsion pairs},
	 J.\ London Math.\ Soc.\ (2) \textbf{68} (2003), no.\ 3, 631--650.

\bibitem{KS2} H.\ Krause and \O.\ Solberg,
	\emph{Filtering modules of finite projective dimension},
	 Forum Math.\ \textbf{15} (2003), 377--393.
	 
\bibitem{L} H.\ Lenzing,
	\emph{Homological transfer from finitely presented to infinite modules},
	Lectures Notes in Math. \textbf{1006},
	Springer, New York (1983), 734--761.

%
\bibitem{Mi} B.\ Mitchell,
	\emph{The cohomological dimension of a directed set},
	Canad.\ J.\ Math.\ \textbf{25} (1973), 233--238.

\bibitem{N1} A.\ Neeman,
	\emph{The connection between the $K$-theory localization theorem of
	Thomason, Trobaugh and Yao and the smashing subcategories of Bousfield
	and Ravenel},
	Ann.\ Sci.\ \'Ecole Norm.\ Sup.\ (4) \textbf{25} (1992), no.\ 5, 547--566.

\bibitem{N2} A.\ Neeman,
	\emph{The Grothendieck duality theorem via Bousfield's techniques and Brown representability},
	J.\ Amer.\ Math.\ Soc.\ \textbf{9} (1996), no.\ 1, 205--236.

\bibitem{N3} A.\ Neeman,
	\textbf{Triangulated categories},
	Annals of Mathematics Studies, 148, Princeton University Press, Princeton, NJ, 2001.

\bibitem{P} M. Prest,
	\textbf{Model theory and modules},
	London Math. Soc. Lec. Note Ser. \textbf{130},
	Cambridge University Press, Cambridge, 1988.

\bibitem{Ra} D.\ Ravenel,
	\emph{Localization with respect to certain periodic homology theories},
	Amer.\ J.\ Math.\ \textbf{106} (1984), no.\ 2, 351--414.

\bibitem{R} J.\ Rickard,
	\emph{Idempotent modules in the stable category},
	J.\ London Math.\ Soc.\ (2) \textbf{56} (1997), no.\ 1, 149--170.

\bibitem{Sa} L.\ Salce,
	\emph{Cotorsion theories for abelian groups},
	Symposia Mathematica, Vol.\ XXIII (Conf.\ Abelian Groups and their Relationship to the Theory of Modules, INDAM, Rome, 1977),
	pp.\ 11--32, Academic Press, London-New York, 1979.

\bibitem{SaT} J.\ \v Saroch and J.\ Trlifaj,
	\emph{Completeness of cotorsion pairs},
	Forum Math.\ \textbf{19} (2007), 749--760.

\bibitem{ST2} J.\ \v S\v tov\'\i\v cek and J.\ Trlifaj,
	\emph{All tilting modules are of countable type},
	Bull. Lond. Math. Soc. \textbf{39} (2007), 121--132.

\bibitem{ST} J.\ \v S\v tov\'\i\v cek and J.\ Trlifaj,
	\emph{Generalized Hill lemma, Kaplansky theorem for cotorsion pairs,
	and some applications},
	to appear in Rocky Mountain J.\ Math.

\bibitem{W} C.\ A.\ Weibel,
	\textbf{An introduction to homological algebra},
	Cambridge Studies in Advanced Mathematics, \textbf{38}.
	Cambridge University Press, Cambridge, 1994.

\bibitem{Z} M.\ Ziegler,
	\emph{Model theory of modules},
	Ann.\ Pure Appl.\ Logic \textbf{26} (1984), 149--213.

\end{thebibliography}
\end{document}